\documentclass{amsart}
\usepackage[utf8]{inputenc}

\title[The Integrability of a knife-edge Billiard in a Disk]{The Integrability of a knife-edge Billiard in a Disk}

\author[A.\ Bravo-Doddoli]{Alejandro\ Bravo-Doddoli} 
\address{Alejandro Bravo-Doddoli: Department of Mathematics, University of Michigan, Ann Arbor, MI 48109, U.S.}
\email{Abravodo@umich.edu}
\author[H.\ Hou]{Huaidian\ Hou}
\address{Huaidian Hou: Computer Science and Engineering Division, University of Michigan, Ann Arbor, MI 48109, U.S.}
\email{houhd@umich.edu} 
\author[W. \ Clark]{William\ Clark}
\address{William Clark: Department of Mathematics, Ohio University, Athens, OH 45701, U.S.}
\email{clarkw3@ohio.edu}
\author[A. M. Bloch]{
Anthony M.\ Bloch}
\address{Anthony Bloch: Department of Mathematics, University of Michigan, Ann Arbor, MI 48109, U.S.}
\email{\href{abloch@umich.edu}{abloch@umich.edu}}

\date{June 2026}

\usepackage[english]{babel}
\usepackage[utf8]{inputenc}
\usepackage{libertine}
\usepackage[final]{graphicx}
\usepackage{floatflt}
\usepackage{blindtext}
\usepackage{enumitem}
\usepackage{amsthm}
\usepackage{subfig}
\usepackage{listings}
\usepackage{listingsutf8}
\usepackage{amsmath}
\usepackage{framed}
\usepackage{minibox}
\usepackage{float}
\usepackage{wrapfig}
\usepackage{longtable}
\usepackage[strict]{changepage}
\usepackage{pgfplots}
\usepackage{nicefrac}
\usepackage{units}
\usepackage{etoolbox,tikz}
\usepackage{bbm}
\usepackage{hyperref}
\usepackage[autostyle]{csquotes}
\usepackage{caption}
\usepackage{subcaption}

\usetikzlibrary{external}
\usetikzlibrary{cd}

\AtBeginEnvironment{tikzcd}{\tikzexternaldisable}
\AtEndEnvironment{tikzcd}{\tikzexternalenable}

\usepackage{amsmath}
\usepackage{amsfonts}

\usepackage{amsthm}
\usepackage {amsmath, amssymb}
\usepackage{pb-diagram}
\usepackage{tikz-cd}
\usepackage{mathtools}
\usepackage{amssymb}

\usetikzlibrary{matrix}
\pgfplotsset{width=11cm,compat=1.9}
\usepgfplotslibrary{external}
\DeclareMathOperator{\sgn}{sgn}
 
\def\R{\mathbb{R}}

\def\D{\mathcal{D}}
\def\Rm{\mathcal{R}}

\def\I{\mathcal{I}}
\def\T{\mathrm{T}}

\numberwithin{equation}{section}

\newtheorem{thm}{Theorem}
\newtheorem{proposition}[thm]{Proposition}
\newtheorem{corollary}[thm]{Corollary}
\newtheorem{lemma}[thm]{Lemma}
\newtheorem{definition}[thm]{Definition}

\newtheorem{remark}[thm]{Remark}

\renewcommand{\emph}[1]{{\bfseries\itshape{#1}}}

\usepackage{hyperref}
\usepackage[%
	capitalize,nameinlink
]{cleveref}

\hypersetup{%
	colorlinks=true,
	linkcolor=blue,
	citecolor=magenta,
	urlcolor=magenta,
}

\numberwithin{figure}{section}

\newcommand{\La}{\mathcal{L}}
\newcommand{\Si}{\mathcal{S}}
\newcommand{\horg}{\nabla_{\mathrm{hor}}}

\begin{document}

 \begin{abstract} 
This paper proves the integrability of a nonholonomic billiard defined by a knife-edge in a disk. The paper begins by parametrizing the impact space using coordinates that reveal the system’s rotational symmetry. It then constructs a map on the space of post-impact states that encodes the billiard dynamics through a recurrence relating each post-impact state to the next.  In addition, the paper shows that the knife-edge billiard admits a family of generalized caustics. 
 \end{abstract}

\maketitle

\section{Introduction}

The theory of classical billiards is a fruitful research area involving the study of Hamiltonian dynamics, integrability, and the transition to chaos \cite{tabachnikov2005geometry,kozlov1991billiards}. In recent years, the area has been extended through generalizations that modify the free-motion law or reflection law. In \cite{clark2019bouncing}, the third and fourth authors of this paper established a framework for studying nonholonomic billiards.
Our main result establishes the integrability of the nonholonomic billiard consisting of a knife-edge in a disk. A knife-edge is a classical nonholonomic system consisting of a planar rigid body  \cite[Chapter 1.6]{tony}. 


Before presenting our results in detail, we recall that a billiard system is a hybrid dynamical system: the continuous dynamics govern motion within a domain, while the discrete transitions are determined by an impact map on the boundary \cite{teel2012hybrid}. More precisely, a nonholonomic hybrid system consists of a quadruple $(Q,\mathcal{L},\D,\mathcal{S})$, where $Q$ is the configuration space, $\mathcal{L}:TQ\to \R$ is a smooth Lagrangian governing the smooth dynamics, $\D \subset TQ$ is called the constraint distribution, and $\mathcal{S}\subset Q$ is the impact surface; see Subsection \ref{ap:hyb-sys} for details.


The nonholonomic impact map 
$$\mathrm{I} :\D|_{\Si} \to \D|_{\Si}$$
is the rule that determines how the admissible velocity changes after an impact; that is, the impact map takes the pre-impact state $(q,\dot{q}^-)$ and sends it to the post-impact state 
\begin{equation*}
  \mathrm{I} (q, \dot{q}^-)= (q, \dot{q}^+), \text{ where } \dot{q}^-,\dot{q}^+\in \D_q .  
\end{equation*}
We call the impact map elastic if it conserves energy; see Subsection \ref{subsec:ela-non-hol-imp} for details.

\begin{figure}
    \centering
    \includegraphics[width=0.28\linewidth]{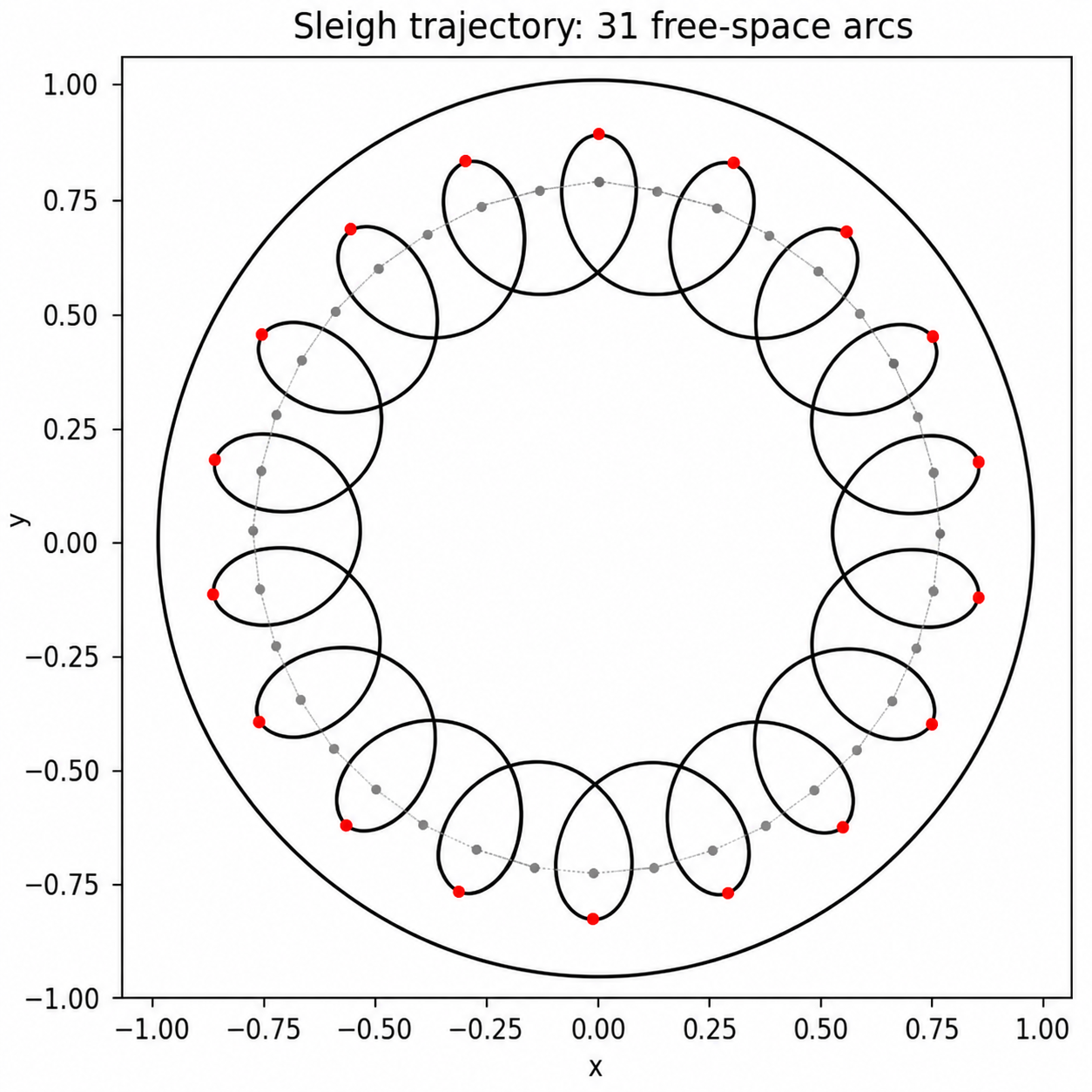}\quad \includegraphics[width=0.28\linewidth]{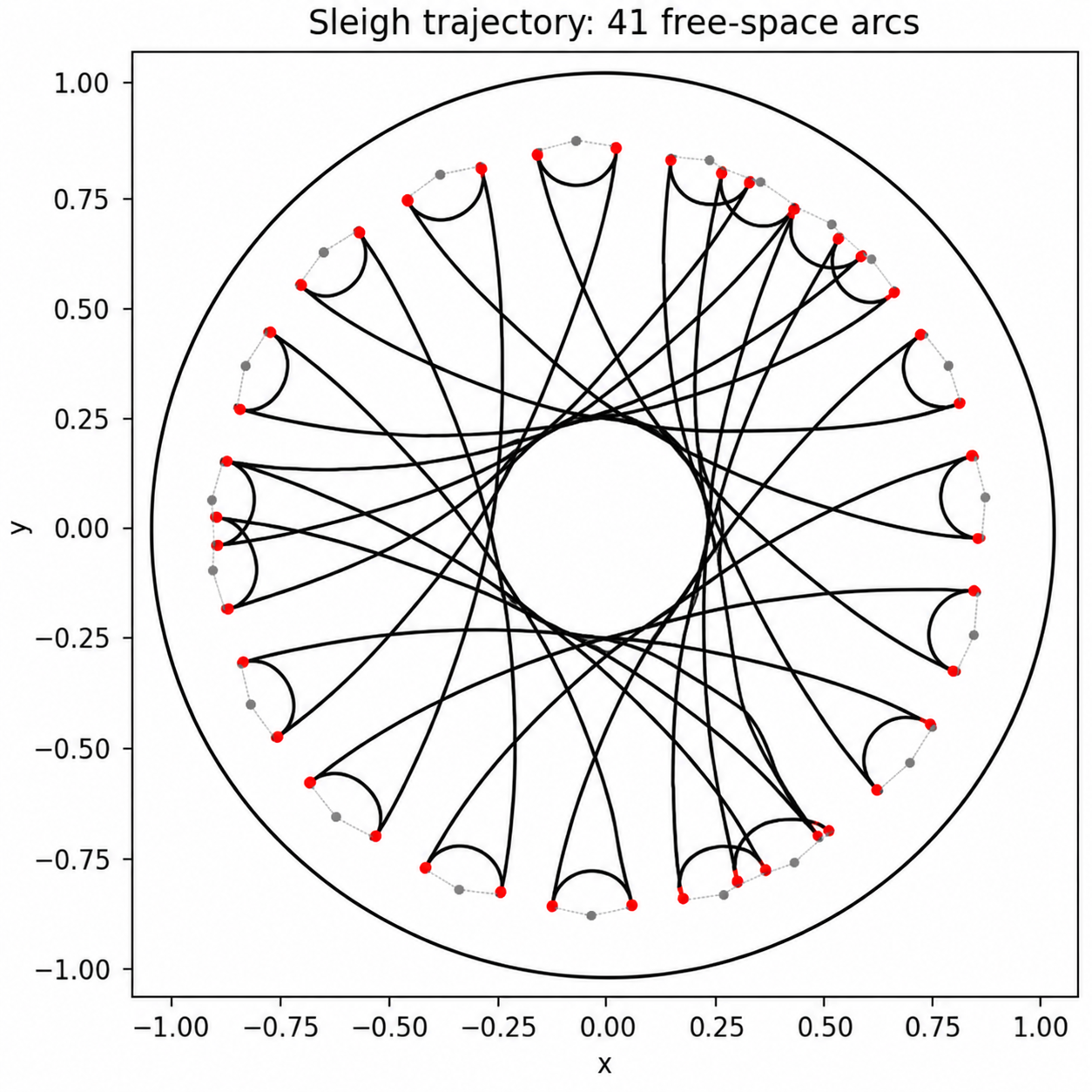}
    \caption{Examples of periodic and dense orbits for generic dynamics. }
    \label{fig:genric-mot}
\end{figure}

We model the knife-edge as a one-dimensional rigid body: a straight line segment of length $2\ell$, with total mass $m$, moment of inertia $J$ about the midpoint, and center of mass at the midpoint. Thus, the configuration space is $SE(2)$. The knife-edge imposes a nonholonomic constraint that prevents motion perpendicular to the blade. This constraint defines the standard left-invariant contact distribution $\D \subset TSE(2)$, which is spanned by the left-invariant vector fields $\{X_1,X_2\}$; see Eq. \eqref{eq:lef-in-SE2}. The quasi-velocities $(\omega,v )\in \R^2$ associated with this frame are the angular velocity $\omega$ of the knife-edge about the midpoint and the admissible linear velocity $v$ of the body. Since there is a linear bijection between a constraint velocity $\dot{q}\in \D_q$ and the pair $(\omega,v )\in\R^2$, we represent a state of the system $(q,\dot{q})$ by   $(q,\omega,v) \in SE(2)\times\R^2$. Throughout the paper, we refer to the hybrid dynamical system $(SE(2),\mathcal{L},\D,\mathcal{S})$ with elastic impacts as the \textbf{knife-edge billiard in the disk}.  Here, the Lagrangian function $\mathcal{L}$, the constraint distribution $\D$, and the impact surface $\mathcal{S}$ are defined by Eqs. \eqref{eq:lag-func}, \eqref{eq:cost-dis}, and \eqref{eq:round table-1}, respectively. 

Let $\mathbb{T}^2$ denote the two-torus.\footnote{The two-torus $\mathbb{T}^2$ is given by $\mathbb{S}^1\times \mathbb{S}^1$, where $\mathbb{S}^1:=\R/2\pi\mathbb{Z}$ denotes the quotient of $\R$ by the relation $\phi \sim\phi +2\pi k$, $k\in \mathbb{Z}$. The equivalence class of $\phi$ is denoted by $[\phi]$.} We identify the spaces of pre-impact and post-impact states with the subsets
$$\I^-\subset \mathbb{T}^2\times\R^2,\;\text{and}\;\I^+\subset \mathbb{T}^2\times\R^2,$$
respectively. A point $z^\pm\in \I^\pm$ has the form $z^\pm \coloneqq ([\phi],[\vartheta],\omega,v)$, where $[\phi] \in \mathbb{S}^1$  parametrizes the boundary of the disk and $[\vartheta] \in \mathbb{S}^1$ describes the relative angle between the outward normal vector to the disk boundary and the orientation of the knife-edge. The angle $[\vartheta]$ is relevant because it is invariant under the action of $SO(2)$; see Subsection \ref{subsec:set-bil}. 

Next, we introduce the \textbf{coupled circle map}\footnote{This terminology reflects that $\mathrm{Ro}$ acts as a coupled circle map on the angular variables $(\phi,\vartheta)$ while leaving the quasi-velocities $(\omega,v )$ unchanged.} $\mathrm{Ro}:\I^+ \to \I^-$ by
$$ \mathrm{Ro}(z^+):= ([\phi]+\Omega(z^+), [-\vartheta+\pi],\omega,v)  ,$$
where $\Omega:\I^+ \to \mathbb{S}^1$ denotes the continuous $SO(2)$-invariant function defined in Eq. \eqref{eq:def-rot-map}. We also define the \textbf{time-of-flight function} $\Delta t:\I^+\to (0,\infty)$ by Eq. \eqref{eq:fin-Del-t}. We are now ready to state our first main result.
\begin{thm}\label{the:rot-map}
If $z_i^+\in \I^+$ is the $i$-th post-impact state, then the next pre-impact state is determined by the relation 
    $$z^-_{i+1} = \mathrm{Ro}(z^+_i).$$
    Moreover, the elapsed time between consecutive impacts is 
    $$t_{i+1}-t_i = \Delta t(z_i^+).$$
\end{thm}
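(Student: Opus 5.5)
The plan is to solve the free (between-impact) dynamics explicitly and then locate the next boundary hit geometrically. On $SE(2)$ with coordinates $(x,y,\theta)$, the knife-edge Lagrangian $\mathcal{L}=\tfrac12 m(\dot x^2+\dot y^2)+\tfrac12 J\dot\theta^2$ with constraint $\dot x\sin\theta-\dot y\cos\theta=0$ gives, via the Lagrange--d'Alembert equations written in the quasi-velocities $(\omega,v)$ of the frame $\{X_1,X_2\}$, the system $\dot\omega=0$ and $\dot v=0$. The constraint force is perpendicular to the blade and does no work, and it also exerts no torque about the midpoint. Hence $(\omega,v)$ is constant along each flight, which explains why $\mathrm{Ro}$ leaves the quasi-velocities unchanged. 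Integrating gives $\theta(t)=\theta_0+\omega t$. For $\omega\neq0$ the midpoint moves on a circle of radius $|v/\omega|$, with center $c=(x_0,y_0)+\tfrac{v}{\omega}(-\sin\theta_0,\cos\theta_0)$. For $\omega=0$ it moves on a straight line.

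The second step is to express the contact condition with the impact surface $\mathcal{S}$ of Eq. \eqref{eq:round table-1}. For a segment of half-length $\ell$ inside a disk of radius $R$, contact occurs when an endpoint touches the circle, i.e. $|(x,y)\pm\ell(\cos\theta,\sin\theta)|=R$. The key observation is that for $\omega\neq0$ each endpoint also traces a circle centered at the same point $c$: the endpoint equals $c$ plus a vector of fixed length $\sqrt{(v/\omega)^2+\ell^2}$ rotating rigidly at rate $\omega$. The impact condition therefore reduces to intersecting two circles, the endpoint circle and $\partial D$. Both are rigid rotations about $c$, so the post-impact and next pre-impact configurations are related by the reflection of the picture across the line through $0$ and $c$. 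This reflection symmetry yields $\vartheta\mapsto-\vartheta+\pi$: the relative angle between the outward normal and the blade flips sign, and the contacting end switches, which produces the shift by $\pi$. The change of boundary angle $\phi$ is twice the angle between $\phi_i$ and the direction of $c$. That quantity depends only on $SO(2)$-invariant data $(\vartheta,\omega,v)$, and I would match it to $\Omega$ in Eq. \eqref{eq:def-rot-map}. The flight time is the elapsed rotation angle of the endpoint about $c$ divided by $|\omega|$, which I would match to Eq. \eqref{eq:fin-Del-t}. In the case $\omega=0$, the same argument applies with a chord of the disk, and the reflection is across the perpendicular bisector of that chord. Continuity of $\Omega$ and $\Delta t$ as $\omega\to0$ would confirm consistency with the defining formulas.

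The third step is to use $SO(2)$-equivariance to reduce to $\phi_i=0$. Rotating the disk commutes with the free flow and with $\mathcal{S}$, so it suffices to verify the formula at one base point. This gives $z^-_{i+1}=\mathrm{Ro}(z^+_i)$.

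The main obstacle is showing that the \emph{first} contact along the flow is the symmetric one, rather than an earlier contact of the other endpoint or a blade crossing the boundary elsewhere. Since the knife-edge lies inside the convex disk, only endpoints can touch the circle. The issue is whether the opposite endpoint, which rotates on a circle about $c$ of the same radius but with a different phase, could reach $\partial D$ sooner. I would first try to order the two endpoints' exit times directly. The two endpoint vectors from $c$ have equal length and differ by a fixed angle $2\arctan(\ell\omega/v)$. The distance from $c$ to the circle points they reach is the same, so the hit times differ by a phase comparison. Admissibility of $z^+_i$, meaning the post-impact velocity points inward, should then select the correct endpoint and the correct branch of the $\arccos$ in the definition of $\Omega$.
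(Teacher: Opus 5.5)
Your route is essentially the paper's: $(\omega,v)$ is constant, the midpoint and both endpoints move on circles about $\mathrm{c}(z^+)$, you normalize by $SO(2)$, and $\phi_{i+1}-\phi_i=-2\sgn(\omega)\alpha$ comes from the two intersection points of the endpoint circle with $\partial\mathbb{D}$. Your reflection $\sigma$ across the line through $0$ and $\mathrm{c}$ is a tidy substitute for the paper's triangle angle-chasing. Because $\sigma$ reverses the sense of rotation, it exchanges front and back, and this does give $[\vartheta_{i+1}]=[-\vartheta_i+\pi]$ \emph{once you know} the next contact is $\sigma$ of the current one. That, however, is never established.

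The sentence ``both are rigid rotations about $c$, so the post-impact and next pre-impact configurations are related by the reflection'' does not follow. Since $\sigma$ preserves the disk and conjugates the flow to its time reversal, it maps the set of admissible configurations on the orbit to itself. But that set can have two components, and you give no reason why $\sigma$ sends the flight arc starting at $z_i^+$ to itself rather than to the other component. This is exactly what Propositions~\ref{lem:led-tra-rel} and~\ref{lem:fro-back-rel} supply in the paper. It is also what decides which of the two expressions in $\Delta t_g$ applies, so ``I would match it to Eq.~\eqref{eq:fin-Del-t}'' cannot be carried out without it. Your framing of the obstacle is also off. The opposite endpoint is the one that should hit next; what must be excluded is the endpoint that just hit re-exiting first at $s_{i+1}$, which takes a rotation of $2\beta$.

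To close the gap, let $\psi$ be the angle of the leading endpoint about $\mathrm{c}$, measured from the ray from $\mathrm{c}$ toward the origin, positively in the sense of rotation.
\begin{itemize}
\item An endpoint at angle $\psi$ lies inside the disk iff $|\psi|<\beta$ (mod $2\pi$), and the trailing endpoint sits at $\psi-2\gamma$.
\item Hence the admissible configurations form the arc $(2\gamma-\beta,\beta)$, together with the arc $(-\beta,\,2\gamma+\beta-2\pi)$ when $\beta+\gamma>\pi$.
\item The first arc begins with the trailing endpoint entering and ends with the leading endpoint exiting. The second begins with a leading-type contact and ends with a trailing-type one, and it is nonempty exactly when leading-type post-impact states are admissible. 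This is where admissibility really enters.
\item In these coordinates $\sigma$ acts by $\psi\mapsto 2\gamma-\psi$. Its fixed points $\gamma$ and $\gamma-\pi$ are precisely the midpoints of the two arcs.
\end{itemize}
So each arc is $\sigma$-invariant with its ends exchanged. This proves $z_{i+1}^-=\mathrm{Ro}(z_i^+)$, and the arc lengths give $|\omega|\,\Delta t=2(\beta-\gamma)$ or $2(\beta+\gamma-\pi)$ according to the type of $z_i^+$. Taking $\gamma=\pi/2$, with front/back in place of leading/trailing, gives the $v=0$ value $2\beta-\pi$, and $\beta=\pi$ covers tangential impacts. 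Your treatment of the case $\omega=0$ is fine as it stands.
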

We next consider the impact map $\mathrm{I}:\mathcal{I}^- \to \mathcal{I}^+$, defined by Eq. \eqref{eq:imp-map-sle}, and define the map
$$ \T\coloneqq \mathrm{I}\circ\mathrm{Ro}:\I^+\to \I^+. $$
The first consequence of Theorem \ref{the:rot-map} is the following corollary.
\begin{corollary}
The map $\T:\I^+\to \I^+$ completely determines the dynamics of the knife-edge billiard in the disk via the recurrence relation
    $$\T(z^+_i) = z^+_{i+1}.$$   
\end{corollary}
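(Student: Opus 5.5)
The corollary follows by composing Theorem \ref{the:rot-map} with the definition of the impact map, and then inducting on the impact index. Fix an initial post-impact state $z_0^+\in\I^+$, and let $z_i^\pm$ denote the successive pre- and post-impact states of the hybrid trajectory, with impact times $t_i$.

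First we establish the one-step recurrence. By Theorem \ref{the:rot-map}, the free nonholonomic flow starting from $z_i^+$ reaches the impact surface $\Si$ after time $\Delta t(z_i^+)>0$, and it does so at the pre-impact state $z_{i+1}^-=\mathrm{Ro}(z_i^+)$. In particular, $\mathrm{Ro}(z_i^+)$ lies in $\I^-$, which is the domain of $\mathrm{I}$. By the definition of the hybrid system, the post-impact state is $z_{i+1}^+=\mathrm{I}(z_{i+1}^-)$, where $\mathrm{I}$ is the elastic impact map of Eq. \eqref{eq:imp-map-sle}. Here we must check that $\mathrm{I}$ sends $\I^-$ into $\I^+$, i.e. that the image is an outgoing state, so that $\T=\mathrm{I}\circ\mathrm{Ro}$ really maps $\I^+$ to $\I^+$. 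I expect this to follow from the construction of $\mathrm{I}$ in Subsection \ref{subsec:ela-non-hol-imp}: elastic reflection conserves the energy and reverses the sign of the normal component of the velocity of the contact point. Combining the two steps gives $z_{i+1}^+=\mathrm{I}(\mathrm{Ro}(z_i^+))=\T(z_i^+)$.

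Next, induction gives $z_i^+=\T^i(z_0^+)$ for every $i$. The impact times are then recovered from $t_i=t_0+\sum_{k=0}^{i-1}\Delta t(\T^k(z_0^+))$, again by Theorem \ref{the:rot-map}. Between impacts, the full state in $SE(2)\times\R^2$ is obtained from $z_i^+$ by the smooth dynamics of $(\mathcal{L},\D)$. By uniqueness of solutions, these dynamics are determined by the initial condition, which we obtain by lifting $z_i^+$ back to $\D|_{\Si}$ through the parametrization of Subsection \ref{subsec:set-bil}. Since $\Delta t>0$, the construction is well defined for all $i$. Hence the entire trajectory is determined by the orbit of $z_0^+$ under $\T$.

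The main point needing care is this lifting. The states $z^\pm$ record $SO(2)$-invariant data, namely $[\vartheta]$, together with the boundary point $[\phi]$, so we must check that $([\phi],[\vartheta],\omega,v)$ pins down the configuration on $\Si$ uniquely; this should follow from the bijectivity of the parametrization in Subsection \ref{subsec:set-bil}. We must also rule out accumulation of impacts (Zeno behavior). The latter follows once the time of flight is bounded below on each energy level, which we expect from the explicit formula \eqref{eq:fin-Del-t}. If such a bound is unavailable, the statement holds as stated for as long as the sequence of impacts is defined.
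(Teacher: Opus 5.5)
Your argument is correct and is the same as the paper's, which presents the corollary as an immediate consequence of Theorem~\ref{the:rot-map}: $z_{i+1}^-=\mathrm{Ro}(z_i^+)$ and $z_{i+1}^+=\mathrm{I}(z_{i+1}^-)$ give $z_{i+1}^+=\T(z_i^+)$, and your check that $\mathrm{I}$ maps $\I^-$ into $\I^+$ reduces to $dh(\dot q^+)=-dh(\dot q^-)$, which follows from Proposition~\ref{prp:pla-imp-} because $dh(\horg h)=g(\horg h,\horg h)$. One correction to your Zeno remark: $\Delta t$ is \emph{not} bounded below on energy levels (for instance $\Delta t_v=2(|R\cos\vartheta|-\ell)/|v|\to 0$ as $|\cos\vartheta|\downarrow \ell/R$), but this does not affect the recurrence, and accumulation of impacts along a single orbit is excluded anyway, since $\Delta t$ depends only on $(\vartheta,\omega,v)$ and, by Theorem~\ref{mainthe:slei}, the flight times simply alternate between $\Delta t(z_0^+)$ and $\Delta t(z_1^+)$.
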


\begin{figure}
    \centering
    \includegraphics[width=0.28\linewidth]{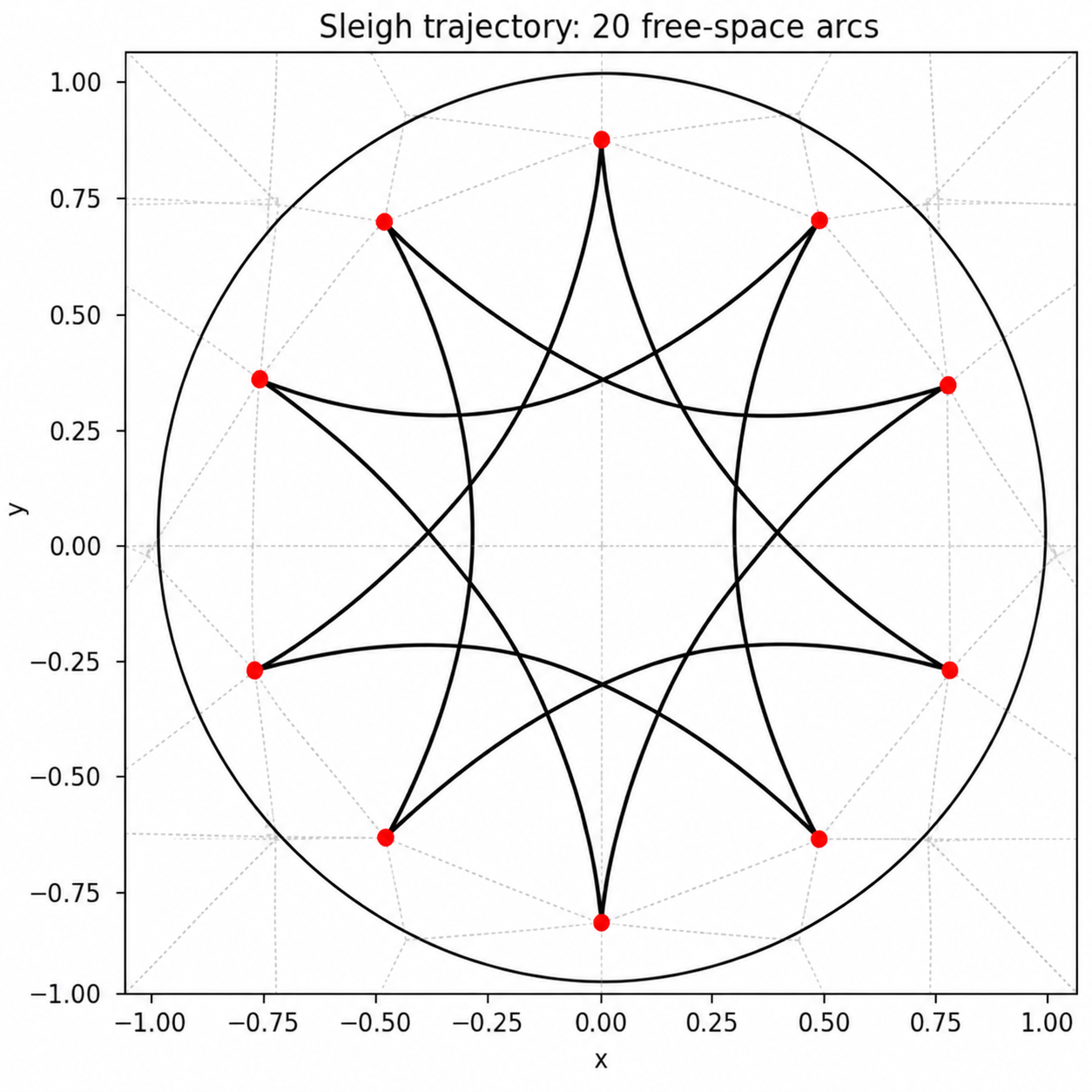}\quad \includegraphics[width=0.28\linewidth]{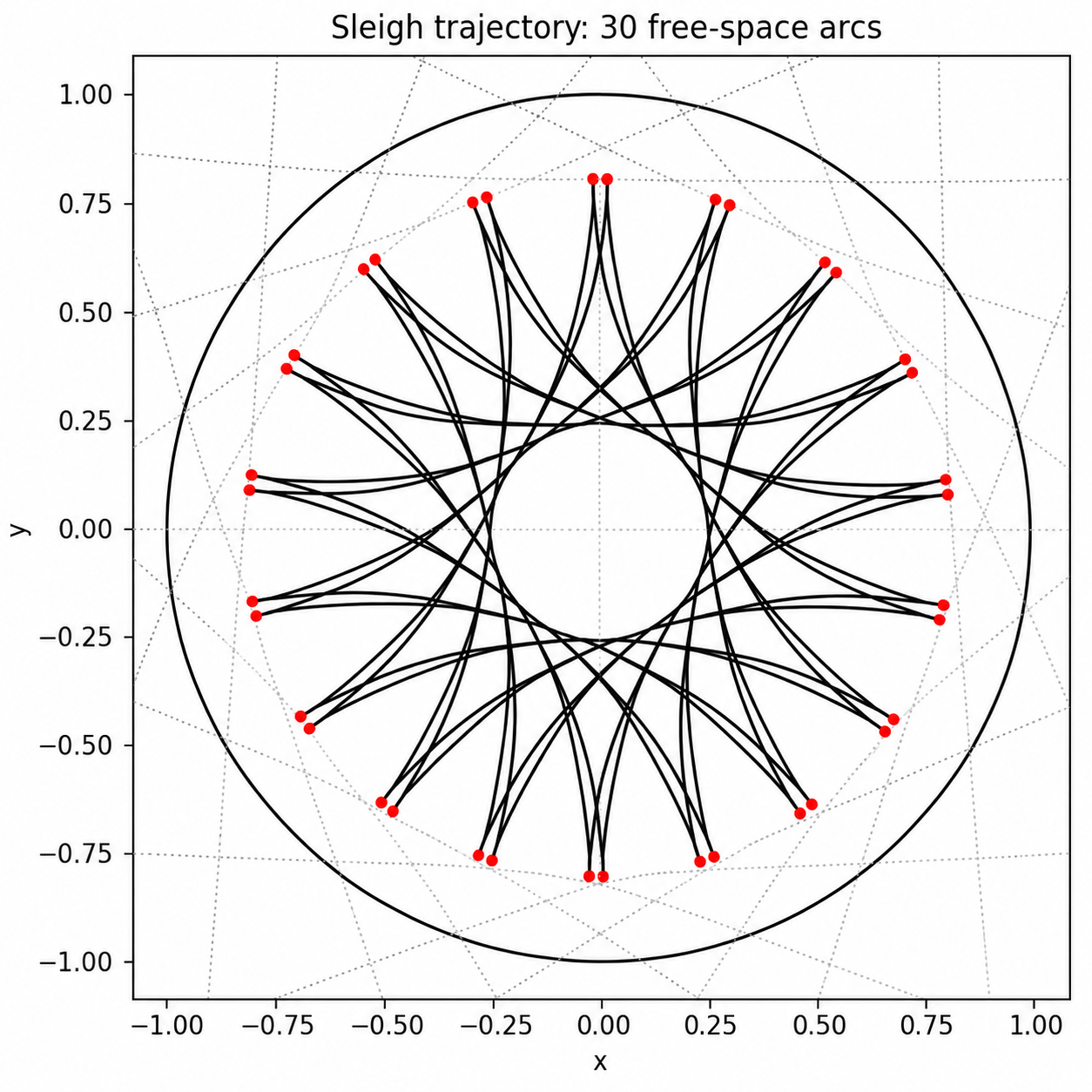}
    \caption{Examples of a periodic and a dense orbit for perpendicular impacts. In this case, the circles $\mathcal{C}^0(z_0^+)$ and $\mathcal{C}^1(z_0^+)$ overlap.}
    \label{fig:per-den-mot}
\end{figure}

Our second main result is the following.
\begin{thm}\label{mainthe:slei}
    For every $i \in \mathbb{Z}$, the iterates of $~\T^2$ are given by 
    $$ \T^{2i}(z) = (\phi+i\varphi(z),\vartheta,\omega,v).$$
    Here, $\varphi: \I^+\to \mathbb{S}^1$ denotes the continuous $SO(2)$-invariant function defined as
     \begin{equation}\label{eq:def-varphi-fuc}
         \varphi(z^+) := \Omega(z^+) + \Omega(T(z^+)).
     \end{equation} 
\end{thm}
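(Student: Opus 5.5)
The plan is to understand $\T$ as a composition of $\mathrm{Ro}$ with the impact map $\mathrm{I}$, and to show that $\T$ acts as an involution on the non-angular data $(\vartheta,\omega,v)$ while merely shifting $\phi$. Then $\T^2$ fixes $(\vartheta,\omega,v)$, and iterating gives a rotation in $\phi$ by the fixed amount $\varphi(z)$.

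\textbf{Step 1: structure of $\mathrm{I}$.} I will use the $SO(2)$-invariance of the impact map. Rotating the disk changes only $[\phi]$, so $\mathrm{I}$ must have the form
$$\mathrm{I}([\phi],[\vartheta],\omega,v)=([\phi],[\vartheta],\hat\omega(\vartheta,\omega,v),\hat v(\vartheta,\omega,v)).$$
The impact does not change configuration, so $\phi$ and $\vartheta$ are preserved. The map $(\omega,v)\mapsto(\hat\omega,\hat v)$ is linear, energy preserving, and depends only on $\vartheta$; this should be read off from Eq. \eqref{eq:imp-map-sle}. Combining this with the definition of $\mathrm{Ro}$, I get
$$\T(z)=([\phi+\Omega(z)],[\pi-\vartheta],\hat\omega(\pi-\vartheta,\omega,v),\hat v(\pi-\vartheta,\omega,v)).$$

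\textbf{Step 2: the reduced map is an involution.} Write $R(\vartheta,\omega,v):=(\pi-\vartheta,\hat\omega(\pi-\vartheta,\omega,v),\hat v(\pi-\vartheta,\omega,v))$. The key claim is $R\circ R=\mathrm{id}$. The angular part is clear, since $\pi-(\pi-\vartheta)=\vartheta$. For the velocity part, I need $\hat{}\,(\vartheta,\hat{}\,(\pi-\vartheta,\omega,v))=(\omega,v)$. I expect this to follow from the explicit impact formula as a reflection identity. The geometric reason would be time reversal: the elastic impact map satisfies $\mathrm{I}^{-1}=\mathrm{I}$ on each fiber. Also, the chord from one impact to the next traced backwards maps $\vartheta\mapsto\pi-\vartheta$ with the velocities flipped, and the reversal symmetry $(\omega,v)\mapsto(-\omega,-v)$ commutes with the linear map $\hat{}$.

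\textbf{Main obstacle.} I expect Step 2 to be the hardest part. Proving this identity requires the explicit form of $\hat{}$, whose coefficients depend on $\cos\vartheta$, $\sin\vartheta$, $\ell$, $m$ and $J$. What I would try first is to write $\hat{}(\vartheta)$ as the matrix of a reflection with respect to the kinetic-energy metric, namely reflection across the annihilator of the constraint normal. Then I would check that the normal vector at $\pi-\vartheta$ is related to the one at $\vartheta$ in a way that makes $\hat{}(\vartheta)\hat{}(\pi-\vartheta)=\mathrm{Id}$. If that does not work directly, I would verify the matrix identity by brute force.

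\textbf{Step 3: conclusion.} By Step 2, $\T^2(z)=([\phi+\Omega(z)+\Omega(\T(z))],[\vartheta],\omega,v)=(\phi+\varphi(z),\vartheta,\omega,v)$. Since $\Omega$ is $SO(2)$-invariant, it depends only on $(\vartheta,\omega,v)$, and $\T$ is $SO(2)$-equivariant. Hence $\varphi$ depends only on $(\vartheta,\omega,v)$, which $\T^2$ leaves unchanged. Therefore $\varphi(\T^{2i}z)=\varphi(z)$, and induction on $i\ge0$ gives $\T^{2i}(z)=(\phi+i\varphi(z),\vartheta,\omega,v)$. For negative $i$, $\T^2$ is invertible with inverse $z\mapsto(\phi-\varphi(z),\vartheta,\omega,v)$, so the same formula holds for all $i\in\mathbb{Z}$.
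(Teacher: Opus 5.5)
Your proposal is correct and follows essentially the same route as the paper: you write $\T^2(z)=(\phi+\varphi(z),\vartheta,\cdot)$ with velocity part a composition of two impact reflections, then show that composition is the identity from the explicit formulas. The check you defer is immediate, since $\sin(\pi-\vartheta)=\sin\vartheta$ and $\cos(\pi-\vartheta)=-\cos\vartheta$ give the same normal covector $(\ell\sin\vartheta,\cos\vartheta)$ and hence $\Rm_b^{\pi-\vartheta}=\Rm_f^{\vartheta}$, so the composition is the square of one reflection; your Step~3 ($\varphi\circ\T^2=\varphi$ by $SO(2)$-invariance, plus negative $i$) makes explicit an iteration step the paper leaves implicit.
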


Thus, Theorem \ref{mainthe:slei} shows that $\T^2$ is a \textbf{rotation map}.\footnote{Here, we use the convention that a rotation map shifts points uniformly, whereas a circle map need not.} This generalizes the classical billiard in a disk, whose dynamics are governed by a rotation map \cite[Chapter 2]{tabachnikov2005geometry}. Although $\T$ itself is not a rotation map, $\T^2$ is.   As in the classical case, this integrability is linked to caustics \cite[Chapter 5]{tabachnikov2005geometry}, which we generalize as follows:


For simplicity, assume that the billiard table $\mathbb{D}$ is the disk of radius $R$ centered at the origin. Then, for each $z^+\in \I^+$, define the curve
$$ \mathcal{C}(z^+) \coloneqq  \mathcal{C}^0(z^+) \cup \mathcal{C}^1(z^+) \subset \mathbb{D}, $$
where $\mathcal{C}^0(z^+)$ and $\mathcal{C}^1(z^+)$ are the circles defined by
\begin{equation*}
    \begin{split}
        \mathcal{C}^0(z^+) & \coloneqq \big\{ (x,y) \in \R^2: x^2+y^2 = \mathrm{f}^2(z^+) \big\}, \\
        \mathcal{C}^1(z^+) & \coloneqq  \big\{ (x,y) \in \R^2: x^2+y^2 = \mathrm{f}^2\big(\T(z^+)\big) \big\}. \\
    \end{split}
\end{equation*}
Here $\mathrm{f}:\I^+\to [0,R)$ is the continuous function defined in Eq. \eqref{eq:func-f-def}. 
We can now state our third main theorem.

\begin{thm}\label{thm:2}
    The knife-edge billiard in the disk of radius $R$ admits a family of generalized caustics 
    $$\mathcal{F} = \{\mathcal{C}(z^+) \}_{z^+\in \I^+}. $$
    More precisely, if $z_0^+\in \I^+$, then,  for every $i\in \mathbb{Z}$, the following statements hold:
    \begin{itemize}
        \item The trace of the midpoint $p(t)\in \mathbb{D}$ under the smooth dynamics with initial state $z^+_{2i}\in \I^+$ is tangent to $\mathcal{C}^0(z_0^+)$. 
        \item The trace of the midpoint $p(t)\in \mathbb{D}$ under the smooth dynamics with initial state $z^+_{2i+1}\in \I^+$ is tangent to $\mathcal{C}^1(z_0^+)$. 
    \end{itemize}
    Moreover, the midpoint corresponding to the state $z^+_{i}\in \I^+$ lies on a circle for all $i\in \mathbb{Z}$.
    \end{thm}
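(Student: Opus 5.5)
The argument rests on the explicit solution of the smooth knife-edge dynamics between impacts, combined with Theorem \ref{mainthe:slei}.

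\textbf{Free motion.} Between impacts, $\omega$ and $v$ are constant, since the Chaplygin-type equations for the left-invariant frame give $\dot\omega=0$ and $\dot v=0$. Hence the orientation is $\theta(t)=\theta_0+\omega t$, and the midpoint satisfies $\dot p = v(\cos\theta,\sin\theta)$.
\begin{itemize}
\item If $\omega\neq 0$, the midpoint traces an arc of a circle of radius $|v/\omega|$ centered at a point $c$ determined by the state.
\item If $\omega=0$, it traces a straight segment.
\end{itemize}

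\textbf{Tangency to the caustic.} In the case $\omega\neq0$, the path is an arc of a circle with center $c$ and radius $r=|v/\omega|$. It is therefore tangent to the circle about the origin of radius $\bigl||c|-r\bigr|$, or to the one of radius $|c|+r$, at the point of the arc closest to the origin. In the straight-line case it is tangent to the circle whose radius is the distance from the origin to the line. I expect the function $\mathrm{f}$ of Eq. \eqref{eq:func-f-def} to be exactly this distance quantity, written in the $SO(2)$-invariant coordinates $(\vartheta,\omega,v)$. The first step is to verify this identification from the definition, expressing $|c|$ via $R$, $\vartheta$ and $v/\omega$ using the law of cosines. This shows that the trajectory starting from $z^+$ is tangent to the circle of radius $\mathrm{f}(z^+)$. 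Tangency should be read in the generalized sense: the trace and the circle touch at the closest-approach point, possibly only on the extension of the arc inside the disk. Since $\mathrm{f}$ is $SO(2)$-invariant, it depends only on $(\vartheta,\omega,v)$ and not on $\phi$.

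\textbf{Passing to the orbit.} By Theorem \ref{mainthe:slei}, $z^+_{2i}=\T^{2i}(z_0^+)$ has the same $(\vartheta,\omega,v)$ as $z_0^+$. Hence $\mathrm{f}(z^+_{2i})=\mathrm{f}(z^+_0)$, and the trace from $z^+_{2i}$ is tangent to $\mathcal{C}^0(z_0^+)$. Likewise, $z^+_{2i+1}=\T^{2i}(\T(z_0^+))$ has the same invariant coordinates as $\T(z_0^+)$. Thus $\mathrm{f}(z^+_{2i+1})=\mathrm{f}(\T(z_0^+))$, giving tangency to $\mathcal{C}^1(z_0^+)$. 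Negative $i$ are handled the same way, since Theorem \ref{mainthe:slei} holds for all $i\in\mathbb{Z}$.

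\textbf{Final claim.} For the last statement I interpret the state $z_i^+$ as a configuration at the boundary. Its midpoint lies at distance from the origin determined by $\phi$ and $\vartheta$, namely the contact point on $\partial\mathbb{D}$ displaced by $\ell$ along the blade direction at relative angle $\vartheta$. Its modulus then depends only on $\vartheta$ (and $R,\ell$). Since $\vartheta$ alternates between the values $\vartheta_0$ and $\vartheta_1$ in the two parities, the midpoints lie on circles centered at the origin. Possibly these are two circles rather than one; under the coupled circle map $\vartheta\mapsto-\vartheta+\pi$ together with the impact map, the radius is expected to coincide, and I would check this.

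\textbf{Main obstacle.} The hard step is matching $\mathrm{f}$ with the closest-approach distance, since I cannot see its formula. This requires care with signs of $\omega$ and $v$ and with the degenerate case $\omega=0$. I would first compute the curvature center $c$ in the frame rotated by $-\phi$ and then compare it with Eq. \eqref{eq:func-f-def}.
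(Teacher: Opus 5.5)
Your proposal is essentially the paper's proof. Theorem \ref{mainthe:slei} freezes $(\vartheta,\omega,v)$ on each parity class of iterates, hence the center distance $d$ and radius $|v/\omega|$ (or the line distance $R|\sin\vartheta|$ when $\omega=0$), and Eq. \eqref{eq:func-f-def} simply \emph{defines} $\mathrm{f}$ as that closest-approach radius, so your ``main obstacle'' is definitional; your generalized reading of tangency is genuinely needed for leading-type states, whose arc lies around the point of its circle farthest from the origin (cf. Remark \ref{rem:int-ext-int}). Two caveats: for $v=0$ the paper takes $\mathrm{f}=|d-\ell|$, the tangency radius of the circle of radius $\ell$ swept by the blade's endpoints about the fixed midpoint, so your identification (which would give $d$) does not literally match there; and your pending check in the last claim succeeds and gives a single circle, since $\|p(z)\|^2=R^2+\ell^2-2R\ell|\cos\vartheta|$ for front- and back-type states alike and $|\cos\vartheta|$ is preserved by $\mathrm{Ro}$ and $\mathrm{I}$ (the paper's proof does not treat this part).
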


The following corollary is a direct consequence of Theorems \ref{mainthe:slei} and \ref{thm:2}.
\begin{corollary}
The knife-edge billiard in the disk has the following properties. 
    \begin{itemize}
        \item If $\varphi(z_0)$ is a rational multiple of $2\pi$, say $\varphi(z_0) = \frac{2\pi a}{b}$ with $a,b \in\mathbb{Z}$, $b>0$, and $gcd(a,b)= 1$, then every midpoint orbit is periodic. In this case, the orbit is $b$-periodic and completes $|a|$ turns around the disk. 
        \item  If $\varphi(z_0)$ is not a rational multiple of $2\pi$, then the midpoint orbit is dense in the annulus bounded by the caustic and the circle traced by the knife-edge's midpoint at the impact times.
    \end{itemize}
\end{corollary}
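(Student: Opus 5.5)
The corollary follows from Theorem \ref{mainthe:slei}, which describes the impact points, and Theorem \ref{thm:2}, which describes the geometry between impacts. We first fix $z_0^+$ and write $\varphi_0:=\varphi(z_0^+)$.

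The first step is to check that $\varphi$ is constant along the orbit, so that $\varphi(z^+_{2i})=\varphi_0$ for all $i$. Since $\varphi$ is $SO(2)$-invariant, it depends only on $(\vartheta,\omega,v)$. By Theorem \ref{mainthe:slei}, $\T^{2}$ leaves $(\vartheta,\omega,v)$ unchanged, so $\varphi(\T^{2i}z_0^+)=\varphi_0$. Applying Theorem \ref{mainthe:slei} again gives $\T^{2i}(z_0^+)=(\phi_0+i\varphi_0,\vartheta_0,\omega_0,v_0)$. The same argument applied to $z_1^+=\T(z_0^+)$ gives $\T^{2i}(z_1^+)=(\phi_1+i\varphi(z_1^+),\dots)$. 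Because $\Omega$ enters symmetrically, we expect $\varphi(z_1^+)=\Omega(z_1^+)+\Omega(\T^2 z_0^+)=\Omega(z_1^+)+\Omega(z_0^+)=\varphi_0$. Here we use $\Omega(\T^2z_0^+)=\Omega(z_0^+)$, which holds by $SO(2)$-invariance. Hence both the even and the odd subsequences of impact states are obtained by rotating by the same angle $\varphi_0$.

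For the rational case $\varphi_0=2\pi a/b$, the map $\T^{2b}$ rotates by $2\pi a\equiv 0$. So $z^+_{2b+j}=z^+_j$ for every $j$. Since the smooth flow between impacts is determined by the post-impact state (Theorem \ref{the:rot-map}) and is equivariant under $SO(2)$, the whole midpoint trajectory is periodic. Each even arc is the rotation of the arc from $z_0^+$ by $i\varphi_0$, and each odd arc is the rotation of the arc from $z_1^+$. The trajectory therefore closes after $b$ applications of the rotation, and by $\gcd(a,b)=1$ not before. Summing the rotation angles $b\varphi_0=2\pi a$ shows that it winds $|a|$ times around the center. Here ``$b$-periodic'' is read in units of $\T^2$, matching the statement.

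For the irrational case, $\{i\varphi_0\}$ is dense in $\mathbb{S}^1$. The union of the rotated arcs $R_{i\varphi_0}(\gamma_0)$ and $R_{i\varphi_0}(\gamma_1)$, where $\gamma_0,\gamma_1$ are the two flight arcs, is then dense in the set $\bigcup_{\theta} R_\theta(\gamma_0\cup\gamma_1)$. By continuity of the flow, this set is the union of all circles $|p|=r$ with $r$ in the range of $|p|$ along $\gamma_0\cup\gamma_1$. Theorem \ref{thm:2} identifies this range. The midpoint at impact lies on a fixed circle of radius $r_{\max}$, and along each arc $|p(t)|$ decreases to the tangency radius $\mathrm{f}$ of its caustic and then increases back, continuously. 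Thus the range is the interval between the caustic radius and $r_{\max}$, and we obtain density in that annulus.

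The main obstacle is this radial-range step. We must show that $|p(t)|$ along a flight arc sweeps exactly $[\mathrm{f},r_{\max}]$, which uses the tangency from Theorem \ref{thm:2} and that the arc does not penetrate inside the caustic. We must also handle the case $\mathcal{C}^0\neq\mathcal{C}^1$ by taking the smaller caustic radius as the inner boundary.
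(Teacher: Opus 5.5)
Your route is the one the paper has in mind: it offers no proof beyond calling the corollary a direct consequence of Theorems \ref{mainthe:slei} and \ref{thm:2}. Several of your steps are fine.
\begin{itemize}
\item The identity $\varphi(z_1^+)=\varphi(z_0^+)$ follows exactly as you argue.
\item The rational case is correct once you fix a real lift of $\varphi(z_0)$. Since $\varphi$ lives in $\mathbb{S}^1$, the count of $|a|$ turns needs the actual angular advance.
\item In the irrational case, the orbit closure is indeed the union of the circles $|p|=\rho$ with $\rho$ in the radial ranges of the two flight arcs.
\end{itemize}
The gap is the radial-range step, and as you state it the step is false, not just unproved. Theorem \ref{thm:2} only says that the whole circle carrying the midpoint is tangent to $\mathcal{C}^0$ or $\mathcal{C}^1$. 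It does not put the tangency point on the arc actually traversed between impacts, and which arc is traversed depends on the type of $z^+$.

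Measure the midpoint's angle at $\mathrm{c}$ from the ray $\mathrm{c}\to 0$, oriented along the motion.

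\textbf{Trailing-type flight.} The trailing endpoint starts at $-\beta$ and the midpoint is $\gamma$ ahead of it. So the midpoint runs from $-(\beta-\gamma)$ to $\beta-\gamma$ and passes through the point nearest the origin. Here $|p(t)|$ really does sweep $[\mathrm{f},r_{\max}]$.

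\textbf{Leading-type flight.} The leading endpoint starts at $-\beta$ and the midpoint is $\gamma$ behind it. So the midpoint runs from $\pi-(\beta+\gamma-\pi)$ to $\pi+(\beta+\gamma-\pi)$. This matches $|\Delta\theta|=2(\beta+\gamma-\pi)$ in Lemma \ref{lem:del-t-generic} and Remark \ref{rem:int-ext-int}, where the arc faces the boundary. The arc therefore passes through the point farthest from the origin. Along it, $|p(t)|$ rises from your impact radius $r_{\max}$ to $|\mathrm{c}|+|v/\omega|$ and comes back, never approaching the caustic.

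A concrete instance: take $R=1$, $\ell=0.3$ and the front-type post-impact state $\vartheta=-\pi/4$, $\omega=1$, $v=0.1$. The midpoint starts at radius $\approx 0.816$ and reaches $\approx 0.831$ at mid-flight. The circle about the origin of radius $|\mathrm{c}|-|v/\omega|\approx 0.631$, tangent to the midpoint's circle, is never reached. For $z^+\in\I^+_\omega$ the midpoint does not move at all.

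What your argument actually proves is density in the annulus whose radii form the union of the two ranges. This union is an interval, since both ranges contain $r_{\max}$. It equals the annulus in the statement only when neither flight is leading-type. Otherwise the orbit also fills the ring between the impact circle and the circle of radius $|\mathrm{c}|+|v/\omega|$ of the leading flight, and that flight's caustic plays no role.

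To close the gap, carry out this arc-location computation. Then either restrict to trailing-type (and straight) flights, or state the annulus accordingly.
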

These consequences are illustrated in Figs. \ref{fig:genric-mot}, \ref{fig:per-den-mot}, and \ref{fig:v-zero}.

\begin{figure}
    \centering
    \includegraphics[width=0.28\linewidth]{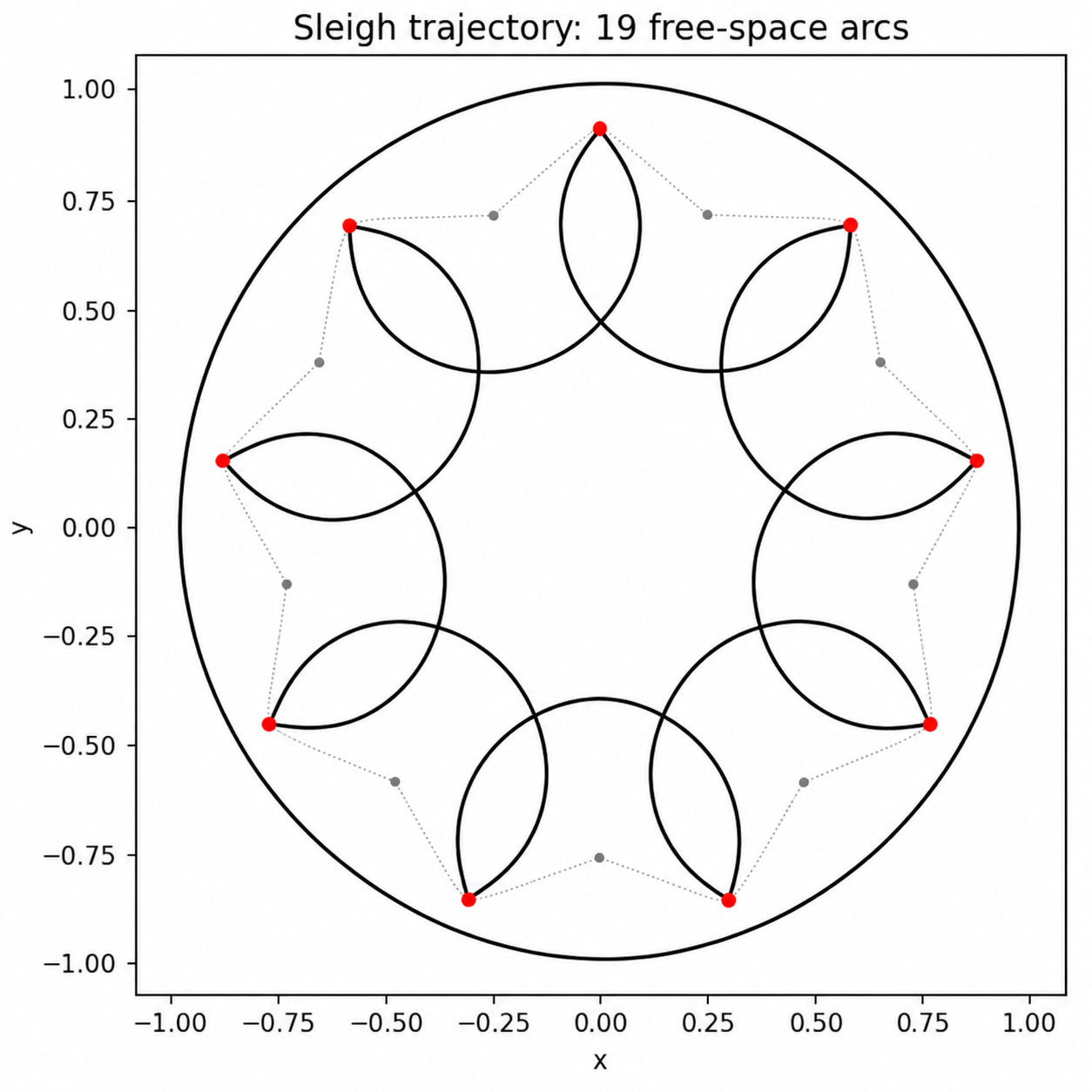}\quad \includegraphics[width=0.28\linewidth]{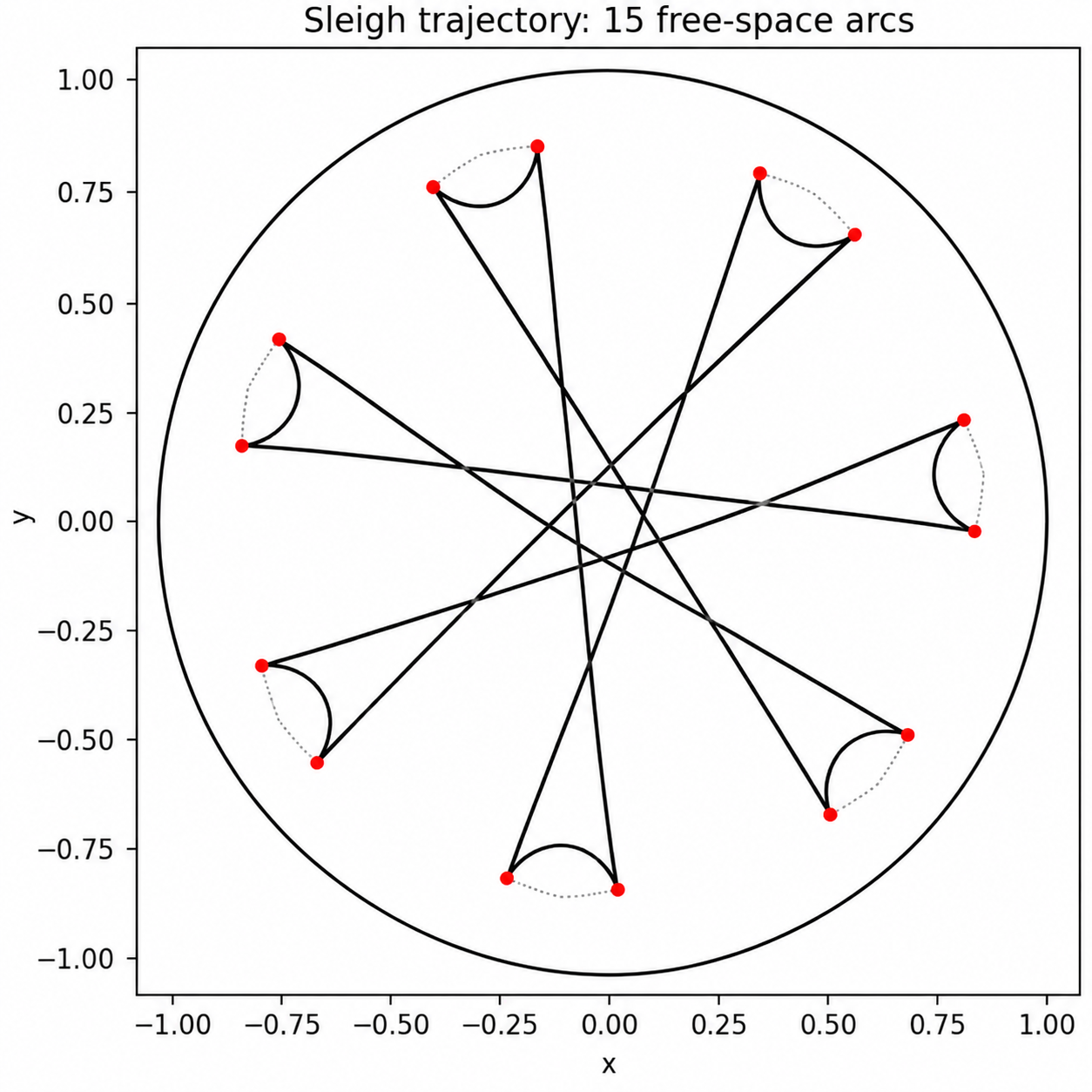}
    \caption{Examples of periodic orbits for the singular dynamics. The left panel shows the case $\omega \neq 0$, $v =0$, and the right panel shows the case $\omega =0$, $v\neq 0$.}
    \label{fig:v-zero}
\end{figure}


\subsection*{Code availability} The second author developed a code implementing the map $\T$ to illustrate the dynamics of the knife-edge billiard in the disk.\footnote{\href{https://github.com/DanielHou315/Knife-Edge-Billard-In-Disk}
     {GitHub repository}} Figs. \ref{fig:genric-mot}, \ref{fig:per-den-mot}, and \ref{fig:v-zero} were generated using this code. Since the code played a fundamental role in developing our intuition about the problem, we make it publicly available.

\subsection*{Acknowledgments}
We are grateful to the Department of Mathematics at the University of Michigan for fostering the collaboration that led to this paper. In particular, we thank Nir Gadish, director of the LoG(M) project, for coordinating the research teams through which A.B.D. and H.H. met. A.B.D. also thanks his Ph.D. advisor, Richard Montgomery, for suggesting the use of Euclidean geometry to approach the project and encouraging him to pursue this line of research. A.M.B. and W.C. were supported in part by AFOSR grant FA9550-23-1-0400 (MURI), and A.M.B. by AFOSR grant FA9550-32-1-0215 and  NSF grants  DMS-2103026 and DMS-2605296.

\subsection*{Structure of the paper}

Section \ref{sec:set-up-pro} formulates the knife-edge billiard in the disk. Subsection \ref{ap:hyb-sys} reviews the basic concepts of hybrid systems and presents the formula for the nonholonomic elastic impact map in Proposition \ref{prp:pla-imp-}. Subsection \ref{sec:chap-sle} introduces the knife-edge billiard and equips the impact space with coordinates. Section \ref{sec:the-2} proves Theorem \ref{the:rot-map} by analyzing the different dynamical regimes, distinguishing between the case $\omega \neq 0$ and the case $\omega  =0$. Finally, Section \ref{sec:proof-main-the} computes the impact map $\mathrm{I}:\I^-\to \I^+$ in the \(z\)-coordinates and proves Theorems \ref{mainthe:slei} and \ref{thm:2}.

\section{Setting Up the Problem}\label{sec:set-up-pro}

This section formulates the knife-edge billiard in the disk. Subsection \ref{ap:hyb-sys} reviews the basic definitions of a nonholonomic hybrid system and the associated impact space. Subsection \ref{subsec:ela-non-hol-imp} presents the nonholonomic elastic impact law in terms of the horizontal gradient. Subsection \ref{subsec:qua-vel} introduces moving frames and quasi-velocities, and rewrites the impact law as a reflection on the space of quasi-velocities. Subsection \ref{sec:chap-sle} introduces the knife-edge as a nonholonomic system. Finally, Subsection \ref{subsec:set-bil} equips the space of impact states with coordinates and introduces the $SO(2)$-invariant coordinate $\vartheta$ that reflects the underlying symmetry of the system.

\subsection{Hybrid Systems}\label{ap:hyb-sys}

We briefly summarize the framework presented in \cite{clark2019bouncing}. Roughly speaking, a hybrid system is a mathematical model of a phenomenon that exhibits both continuous and discrete dynamics \cite{teel2012hybrid}. As we mentioned earlier, a nonholonomic hybrid system is given by the quadruple $(Q,\mathcal{L},\D,\mathcal{S})$. We now recall the definitions of these elements and state the assumptions used in our setting.
\begin{enumerate}
    \item The \textbf{configuration space} $Q$ is a smooth manifold representing the possible configurations of the system. We assume that $Q$ is endowed with a Riemannian metric $g(\cdot,\cdot):TQ\times TQ \to \R$, which defines the kinetic energy of the system as $\mathrm{K}(q,\dot{q}) \coloneqq  \frac{1}{2}g_q(\dot{q},\dot{q})$.

    \item The \textbf{Lagrangian function} $\La:TQ\to \R$ is a smooth function that determines the smooth dynamics through the Lagrange-d’Alembert principle. We assume that the Lagrangian $\La$ is \textbf{natural}; that is, it has the form kinetic energy minus potential energy:
    $$\La(q,\dot{q}) = \mathrm{K}(q,\dot{q}) - \mathrm{V}(q).$$

    \item The \textbf{constraint distribution} $\D \subset TQ$ is a nonintegrable distribution that assigns to each point $q\in Q$ a linear subspace $\D_q \subset T_qQ$ of admissible velocities, while imposing no constraints on the admissible configurations. We assume that $\D$ has constant rank, meaning that $\dim \D_q$ is independent of $q$. Furthermore, $\D$ is defined as the kernel of a collection of linearly independent one-forms:
    $$ \D_q = \bigcap_{k=1}^m \ker \;\Theta_q^k.$$

    \item The \textbf{impact surface} $\Si\subset Q$ is a smooth embedded codimension-1 submanifold that represents the physical boundary at which smooth motion ceases and impacts occur. We assume that the impact surface $\mathcal{S}$ is defined as the level set of a smooth function $h:Q \to \R$.
\end{enumerate}

Recall that a pair $(q,\dot{q})\in TQ$ is called a \textbf{state}. In this context, pre-impact and post-impact refer to the states of the system immediately before and immediately after a collision with the impact surface $\Si$, respectively. More precisely, let $q(t) \in Q$ denote the smooth motion on the time interval $(t_i,t_{i+1})$:
\begin{itemize}
    \item A state $(q,\dot{q}^-)$ is called \textbf{pre-impact} if 
 $$\lim_{t\to t_{i+1}^-}(q(t),\dot{q}(t)) = (q,\dot{q}^-)\in \D|_\Si. $$
 \item A state $(q,\dot{q}^+)$ is called \textbf{post-impact} if 
 $$\lim_{t\to t_i^+}(q(t),\dot{q}(t)) = (q,\dot{q}^+)\in \D|_\Si. $$
\end{itemize}

The standard formulation of Birkhoff billiards considers only regular impacts \cite[Section 1]{ALBERS2018822}. In contrast, as we see in the knife-edge billiard, nonholonomic billiards also admit tangential impacts \cite{KRYZHEVICH2024128018}. We therefore introduce the corresponding definitions. Let $dh_q:T_qQ \to \R$ denote the differential of the function $h$ at $q$, where $h$ is a defining function for the impact surface $\Si$.

\begin{itemize}
    \item An impact state $(q,\dot{q})$ is called \textbf{regular} if 
 $$ dh_q(\dot{q}) \neq 0. $$
 \item An impact state $(q,\dot{q})$ is called \textbf{tangential} if 
 $$ dh_q(\dot{q}) = 0.  $$
\end{itemize}

\subsubsection{Elastic Nonholonomic Impact Map}\label{subsec:ela-non-hol-imp}

For the classical theory of elastic impacts, see \cite{Redner_2004}. In Subsection \ref{ap:hyb-sys}, we assume that the Lagrangian function $\La:TQ\to\R$ is natural. Under this assumption, the fiber derivative (Legendre transform) $F\La:TQ\to T^*Q$ is a global diffeomorphism, defined by $v_q \to D\La_q(v_q) \in T^*_qQ$. Consequently, the Hamiltonian (energy) of the system is given by
$$ H(q,p) = p(v) - \La(q,v), \;\;\text{where}\;\; p = F\La_q(v).$$
With these definitions in place, elastic impacts are characterized by the variational impact equations
\begin{equation}\label{eq:imp}
    \begin{split}
        (F\La^+-F\La^-)\delta q & = 0, \\
        (H^+-H^-)\delta t & = 0. 
    \end{split}
\end{equation}
The impact is called \textbf{free} if $\delta t \neq 0$. As the second equation in Eq. \eqref{eq:imp} shows, a free impact preserves energy.

 Recall that, given a function $h:Q\to \R$ on a Riemannian manifold $(Q,g)$ and a smooth distribution $\D\subset TQ$, the \textbf{horizontal gradient} of $h$ is the unique vector $\horg h \in \D$ such that  
$$ g(\horg  h,X) = dh(X),\;\text{for all}\;X\in \D. $$
See \cite{montgomery2002tour} for details about the horizontal gradient.

The following proposition gives the nonholonomic elastic impact map.

\begin{proposition}\label{prp:pla-imp-}
    If $\La$ is a natural Lagrangian and the one-forms $\{\Theta^k\}_{k=1}^m$ and $dh$ are linearly independent, then the nonholonomic elastic impact map $\mathrm{I}:\D|_{\Si}\to \D|_{\Si}$ is given by $\mathrm{I}(q,\dot{q}^-) = (q,\dot{q}^+)$, where
    \begin{equation*}
       \dot{q}^+ = \dot{q}^- -\frac{2dh(\dot{q}^-)}{g\big(\horg  h, \horg  h\big)} \horg h.
    \end{equation*}
\end{proposition}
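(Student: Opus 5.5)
The plan is to read the impact equations \eqref{eq:imp} as a pair of conditions on the post-impact velocity, then solve them inside $\D_q$. The unknown is $\dot q^+\in\D_q$, given $\dot q^-\in\D_q$ at a point $q\in\Si$.

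First I interpret the first equation of \eqref{eq:imp}. Admissible variations $\delta q$ are those tangent to $\Si$ and lying in $\D_q$, that is, $\delta q\in\D_q\cap\ker dh_q$. Since $\La$ is natural, $F\La_q(v)=g_q(v,\cdot)$, and the potential term drops out of the momentum difference. The condition becomes
\[
g_q(\dot q^+-\dot q^-,X)=0\quad\text{for all } X\in\D_q\cap\ker dh_q .
\]
Both velocities lie in $\D_q$, so the jump $\dot q^+-\dot q^-$ lies in $\D_q$. It is therefore $g$-orthogonal, within $\D_q$, to the subspace $\D_q\cap\ker dh_q$.

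Next I identify that orthogonal complement. Linear independence of $\{\Theta^k\}$ and $dh$ means $dh_q$ does not vanish on $\D_q$, so $\D_q\cap\ker dh_q$ has codimension one in $\D_q$. By definition of the horizontal gradient, $g(\horg h,X)=dh(X)$ for $X\in\D_q$. Hence $\horg h\neq 0$, and its orthogonal complement in $\D_q$ is exactly $\D_q\cap\ker dh_q$. Consequently
\[
\dot q^+=\dot q^-+\lambda\,\horg h \quad\text{for some scalar }\lambda .
\]

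Then I use the second equation. A free impact has $\delta t\neq0$, so $H^+=H^-$. The potential is unchanged at fixed $q$, so kinetic energy is conserved:
\[
g(\dot q^-+\lambda\horg h,\;\dot q^-+\lambda\horg h)=g(\dot q^-,\dot q^-).
\]
This reduces to $2\lambda\, g(\dot q^-,\horg h)+\lambda^2 g(\horg h,\horg h)=0$. Since $\dot q^-\in\D$, we have $g(\dot q^-,\horg h)=dh(\dot q^-)$.

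This gives two roots. The root $\lambda=0$ is the identity, which is discarded as no impact; it coincides with the other root only in the tangential case $dh(\dot q^-)=0$. The nontrivial root is
\[
\lambda=-\frac{2\,dh(\dot q^-)}{g(\horg h,\horg h)},
\]
which yields the stated formula.

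Finally I check that the resulting $\dot q^+$ lies in $\D_q$, which is immediate since both terms do. I also note that $dh(\dot q^+)=-dh(\dot q^-)$, confirming that the map is a reflection.

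The only delicate point is the justification that admissible virtual displacements are $\D_q\cap\ker dh_q$, and the choice of the nontrivial root. I would cite the framework of \cite{clark2019bouncing} for both.
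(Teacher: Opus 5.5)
Your proof is correct. It reaches the same quadratic as the paper by a more intrinsic route.

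The paper works in $T^*Q$. It writes the momentum jump as $\lambda_k\Theta^k+\alpha\,dh$ and raises indices to get $\dot q^+=\dot q^-+\lambda_kW^k+\alpha\nabla h$. It then eliminates the $\lambda_k$ using $\dot q^\pm\in\D$ and the inverse of the Gram matrix $a^{k\ell}=\Theta^k(W^\ell)$. Only inside the energy equation does it recognize $\nabla h-a_{k\ell}\Theta^k(\nabla h)W^\ell=\pi_{\D}(\nabla h)$ as $\horg h$, and it needs the extra identity $g(\horg h,\horg h)=g(\nabla h,\horg h)$ to finish.

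You instead restrict the metric to $\D_q$ from the outset. The jump lies in $\D_q$ and is $g$-orthogonal to the hyperplane $\D_q\cap\ker dh_q$. The defining property of the horizontal gradient then identifies the orthogonal complement of that hyperplane in $\D_q$ as the line spanned by $\horg h$. What your route buys:
\begin{itemize}
\item It eliminates the ambient multipliers and any need to invert $a^{k\ell}$.
\item It shows the independence hypothesis is used only to guarantee $\horg h\neq 0$.
\item It exhibits the map directly as the $g$-reflection of $\D_q$ across $\D_q\cap T_q\Si$, which the paper only remarks after the proposition.
\end{itemize}
What the paper's computation buys in exchange:
\begin{itemize}
\item An explicit ambient expression for $\horg h$ as the orthogonal projection of $\nabla h$, which ties the result to the projection-based formula it cites.
\item Explicit values of the impulsive constraint multipliers $\lambda_k$.
\end{itemize}
Both versions take the admissible virtual displacements $\D_q\cap\ker dh_q$ as given and select the nontrivial root in the same way.
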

In his Ph.D. thesis, the third author, W.C., obtained a similar formula using the orthogonal projection rather than the horizontal gradient \cite[Eq. 6.1.2]{Clark2020Thesis}. The formula in Proposition \ref{prp:pla-imp-} generalizes the classical notion of Birkhoff billiards; see \cite[Eq. 5]{clark2019bouncing}. In this setting, an elastic impact leaves the tangential component of the velocity unchanged and reverses its normal component. Thus, the classical impact map acts as a reflection with respect to the tangent plane to the impact surface \cite[Section 1]{ALBERS2018822}. In the nonholonomic setting, the impact map acts fiberwise as a reflection of $\D$ across $\D \cap T\mathcal{S}$. Moreover, Proposition \ref{prp:pla-imp-} and the definition of the horizontal gradient imply that $\mathrm{I}$  reverses the sign of the normal velocity component, and that the space of tangential impacts is $I$-invariant.\footnote{ Since we assume that the Lagrangian is natural, the nonholonomic equations are time-reversible; that is, if $(q(t),\dot{q}(t))$ is a solution of the nonholonomic equations, then $(q(-t),-\dot{q}(-t))$ is also a solution. Therefore, a set that is forward-invariant is also backward-invariant, and we simply call it invariant.}

Since the proof of Proposition \ref{prp:pla-imp-} involves techniques that are not relevant to the remainder of the paper, we postpone it to Appendix \ref{apd:imap-map}.

\subsubsection{Moving Frame and Quasi-velocities}\label{subsec:qua-vel}

A moving frame is a natural generalization of a coordinate system that facilitates the formulation of nonholonomic equations by exploiting the symmetries of the system \cite{grabowski2009nonholonomic}. In our setting, it also facilitates the computation of the horizontal gradient. 

Let $(Q,\mathcal{L},\D)$ be a nonholonomic Lagrangian system.  The Riemannian metric $g$ induces an orthogonal decomposition $TQ = \D\oplus\D^\perp$. Let $\{X_i\}_{i=1}^n$ and $\{Y_j\}_{j=1}^{m}$ be moving frames for $\D$ and $\D^\perp$, respectively. Every tangent vector $\dot{q} \in TQ$ can be uniquely expressed as
$$ \dot q = v_iX^i+u_\ell Y^\ell,\;\text{where}\;v_i\;\text{and}\;u_\ell\;\text{are called the}\; \textbf{quasi-velocities}. $$
The condition $\dot{q}\in \D$ is equivalent to $u_1=\dots = u_{m} = 0$.

Let $\{X_i^*\}_{i=1}^n\cup \{Y_\ell^*\}_{\ell=1}^{m}$ denote the dual coframe of $\{X_i\}_{i=1}^n\cup \{Y_\ell\}_{\ell=1}^{m}$.  The differential of $h$ is given by
$$ dh = \sum_{i=1}^n X_i(h) X^*_i + \sum_{\ell=1}^m Y_\ell(h) Y^*_\ell. $$
Furthermore, if $\{X_i\}_{i=1}^n$ is an orthonormal moving frame, then the horizontal gradient is given by
\begin{equation}\label{eq:hor-gra}
   \horg h =  \sum_{i=1}^n X_i(h) X_i.  
\end{equation}
Identifying $\dot{q} \in \D$ with $(v_1,\dots,v_n) \in \R^n$, the impact map takes the following form:
\begin{equation*}
    \mathrm{I}(q,v^-_1,\dots,v^-_n) = (q, \Rm(v^-_1,\dots,v^-_n)),
\end{equation*}
where $\Rm:\R^n\to\R^n$ denotes the reflection defined by 
\begin{equation}\label{eq:imp-map-qua}
v^+_i \coloneqq  v_i^--2\sum_{j=1}^n\frac{ v_j^- X_j(h) }{g(\horg h,\horg h)}X_i(h),\qquad \text{for all} \;i=1,\dots,n.
\end{equation}
More precisely, $\Rm:\R^n\to\R^n$ is an orthogonal transformation with respect to the quadratic form 
$$\mathrm{K}(q,\dot{q}), \;\;\text{where}\;\;\dot{q} = \sum_{i=1}^n v_i X_i.$$
This is consistent with elastic impacts conserving energy.

\subsection{The knife-edge}\label{sec:chap-sle}
Here, we model the \textbf{knife-edge} as a rigid line segment of length $2\ell$. Its configuration space is the special Euclidean group $SE(2)$, with local coordinates $q\coloneqq (\theta,p)$, where $[\theta] \in \mathbb{S}^1$ is the orientation\footnote{ Here, we use the convention that the elements of $\mathbb{S}^1$ are called oriented angles, and an unoriented angle takes values in $[0,\pi]$.} angle measured counterclockwise from the positive x-axis and $p\coloneqq (x,y)\in\R^2$ denotes the \textbf{midpoint}. 

The knife-edge imposes a nonholonomic constraint that prohibits motion perpendicular to the blade. Thus,  the constraint distribution is
\begin{equation}\label{eq:cost-dis}
    \D : =\big\{ \dot{q} \in TSE(2):-\dot{x}\sin\theta+\dot{y}\cos\theta = 0 \big\}.
\end{equation}
The standard left-invariant contact distribution $\D$ is spanned by the vector fields
\begin{equation}\label{eq:lef-in-SE2}
    X_1 = \frac{\partial}{\partial \theta} ,\;\;\text{and}\;\; X_2 =  \cos \theta \frac{\partial}{\partial x} + \sin \theta \frac{\partial}{\partial y}.
\end{equation}

Throughout the paper, we consider the force-free motion of the knife-edge with mass $m$, moment of inertia $J$ about its center of mass, and center of mass at the midpoint $p$. Under these assumptions, the Lagrangian function $\La:TSE(2) \to \R$ is purely kinetic and is given by  
\begin{equation}\label{eq:lag-func}
\begin{split}
    \La(q,\dot{q}) & = \frac{1}{2}\big(J\dot{\theta}^2 + m(\dot{x}^2+\dot{y}^2)  \big).\\
\end{split}
\end{equation}
Since the Lagrangian is independent of the configuration $q$, the system is invariant under the action of the Euclidean group \cite{bloch1996nonholonomic,grabowski2009nonholonomic}. This symmetry allows us to reduce the equations of motion. The reduced dynamics are naturally expressed in terms of the quasi-velocities
\begin{equation*}\label{eq:quasi-vel}
    \omega \coloneqq  \dot{\theta}\quad \text{and} \quad v \coloneqq  \cos\theta \dot{x} + \sin\theta \dot{y}
\end{equation*}
associated with the frame $\{X^1,X^2\}$. The corresponding linear identification between a pair $(\omega,v )\in \R^2$ and a constraint velocity $\dot{q} \in \D$ is
\begin{equation}\label{eq:lin-bij}
    (\omega,v ) \to \dot{q} = \omega X_1+v X_2. 
\end{equation}
Henceforth, we represent a state $(q,\dot{q})$ by the triple $(q,\omega,v)$. In these coordinates, the kinetic energy is given by the quadratic form
\begin{equation*}
    \mathrm{K}(\omega,v ) = \frac{1}{2}\big( J \omega^2+mv^2\big). 
\end{equation*}
Because the center of mass of the knife-edge coincides with the contact point, the quasi-velocity dynamics are trivial \cite[Subsection 1.6]{tony}.\footnote{A more complex nonholonomic system is the Chaplygin sleigh, in which the center of mass is displaced from the contact point \cite{borisov2009dynamics}.} We classify the smooth dynamics as \textbf{generic} when $\omega v \neq 0$ and \textbf{singular} when $\omega v = 0$.

\subsubsection{The Billiard System}\label{subsec:set-bil}

We restrict our attention to the knife-edge billiard in the disk of radius $R$. Recall that the disk is assumed to be centered at the origin. Thus, the \textbf{boundary of the billiard table} is
\begin{equation*}\label{eq:round table}
    \mathbb{S}^1(R) \coloneqq  \{(x,y) \in \R^2: \|(x,y)\|^2 = R^2 \}.
\end{equation*}
The boundary $\mathbb{S}^1(R)$ is parametrized by an angle $[\phi] \in \mathbb{S}^1$,  measured counterclockwise from the positive $x$-axis, so that each point $s \in\mathbb{S}^1(R)$ has the form
$$ s =  R(\cos\phi, \sin\phi).$$

Since the knife-edge has no intrinsic front-to-back orientation, the configurations $(\theta,p )$ and $(\theta +\pi,p )$ are indistinguishable. To describe impact states, we therefore adopt the following coordinate convention:
        \begin{equation}\label{eq:def-front-back}
         \begin{split}
         p_f &\coloneqq  p + \ell (\cos\theta, \sin\theta), \\
          p_b &\coloneqq  p - \ell (\cos\theta, \sin\theta). 
        \end{split}
        \end{equation}
We refer to $p_f$ and $p_b$ as the knife-edge's \textbf{front point} and \textbf{back point}, respectively. With this convention, we define the impact surface. We first regard the expressions in  Eq. \eqref{eq:def-front-back} as defining functions $p_f,p_b:SE(2) \to \R^2$. We then define the function $h:SE(2) \to \R$ as
\begin{equation}\label{eq:round table-1}
    h(q)\coloneqq \max\big\{ \|p_f(q)\|^2, \|p_b(q)\|^2 \big\}. 
\end{equation}
Therefore, the \textbf{impact surface} is given by 
$$\Si:=\{ q \in SE(2): h(q) = R^2\}.$$

With this convention in place, we say that a state $(q,\dot{q})$ is an impact state if the following conditions hold:
\begin{itemize}
    \item  Either the front point or the back point lies on the impact surface. 
    \item Apart from the endpoint lying on the boundary, the knife-edge is strictly contained in the disk.\footnote{The only nontrivial motion for which both endpoints lie on the boundary of the table occurs when both points undergo circular motion centered at the origin with radius $R$; in other words, both points remain on the boundary for all time. Since these states do not produce successive impacts with the boundary, we exclude them from our analysis.}
\end{itemize}
An impact state $(q,\dot{q})$ is of \textbf{front-type} if $p_f(q) \in \mathbb{S}^1(R)$ and of \textbf{back-type} if $p_b(q) \in \mathbb{S}^1(R)$. By the second condition above, there are no impact states $(q,\dot{q})$ that are simultaneously front-type and back-type; it follows that either $R^2= \|p_f(q)\|^2$ or $R^2 = \|p_b(q)\|^2$. Consequently, $h$ is smooth in a neighborhood of the space of impact states. 

We now endow the impact space with coordinates; we begin with front-type states. By Eq. \eqref{eq:def-front-back}, an impact state is front-type if there exists a point $s \in \mathbb{S}^1(R)$ such that 
\begin{equation}\label{eq:def-front-type}
  s= p_f. 
\end{equation}
The back point of the knife-edge lies strictly inside the disk if and only if
$$ \|s - 2 \ell (\cos\theta, \sin\theta)\| <R.  $$
Using the expression $s=R(\cos\phi,\sin\phi)$, we see that the above inequality is equivalent to 
\begin{equation}\label{eq:def-front-type-2}
    \begin{split}
      \frac{\ell}{R} &< \cos\big( \phi-\theta\big) . \\
    \end{split}
\end{equation}
We continue with back-type states. Similarly, by Eq. \eqref{eq:def-front-back}, an impact state is back-type if there exists a point $s_b \in \mathbb{S}^1(R)$ such that 
\begin{equation}\label{eq:def-back-type}
  s_b= p_b. 
\end{equation}
The front point of the knife-edge lies strictly inside the disk if and only if
\begin{equation}\label{eq:def-back-type-2}
    \begin{split}
      \frac{\ell}{R} &< - \cos\big( \phi-\theta\big) . \\
    \end{split}
\end{equation}
Thus, each quadruple $([\phi],[\theta],\omega,v)\in \mathbb{T}^2\times(\R^2\setminus\{(0,0)\})$\footnote{The point $(\omega,v ) = (0,0)$ is excluded since its dynamics are trivial.} satisfying either Eqs. \eqref{eq:def-front-type} and \eqref{eq:def-front-type-2} or Eqs. \eqref{eq:def-back-type} and \eqref{eq:def-back-type-2} determines the midpoint $p$ of an impact state of front-type or back-type, respectively. Consequently, we endow the space of impact states with coordinates $([\phi],[\theta],\omega,v)$. 

Although the definition of the impact space is natural in the coordinates $([\phi],[\theta],\omega,v)$, these coordinates do not make the symmetries of the system readily apparent. As noted earlier, the knife-edge system is invariant under rigid motions. This rotational symmetry is preserved in the knife-edge billiard in the disk, since the impact surface is itself invariant under rotations. Specifically, a rotation of the plane $\R^2$ by an angle $\tau \in \mathbb{S}^1$ about the origin induces the following transformation of the coordinates:
$$  ([\phi],[\theta],\omega,\allowbreak v) \to ([\phi]+\tau,[\theta]+\tau,\omega,\allowbreak v). $$
We conclude that Eqs. \eqref{eq:def-front-type-2} and \eqref{eq:def-back-type-2} are invariant under rotations. This observation motivates the following change of coordinates:
\begin{equation}\label{eq:cha-coor}
    ([\phi],[\theta],\omega, v) \to ([\phi],[\vartheta],\omega, v)\coloneqq  ([\phi],[\phi-\theta],\omega, v). 
\end{equation}
To simplify notation, throughout the paper we write $z: = ([\phi], [\vartheta], \omega, v)$.

Thus, via Eqs. \eqref{eq:def-front-type} and \eqref{eq:cha-coor}, we identify the space of front-type impact states with the coordinate set
\begin{equation}\label{eq:def-fro-po}
    \begin{split}
        \I_f\coloneqq  \Big\{ z\in \mathbb{T}^2\times(\R^2\setminus\{(0,0)\}) : \frac{\ell}{R} &< \cos\vartheta  \Big\} . \\
    \end{split}
\end{equation}
By an analogous argument,  using Eqs. \eqref{eq:def-back-type} and \eqref{eq:cha-coor}, we identify the space of back-type impact states  with the coordinate set
\begin{equation}\label{eq:def-bac-po}
    \begin{split}
        \I_b\coloneqq  \Big\{ z \in \mathbb{T}^2\times(\R^2\setminus\{(0,0)\}) : \frac{\ell}{R} &< -\cos\vartheta\Big\} . \\
    \end{split}
\end{equation}
The space of impact states is then
$$  \I \coloneqq  \I_f \cup \I_b. $$

We identify the coordinate $z$ with the corresponding impact state $(q(z),\dot{q}(z))$. More precisely, $q(z) = (\theta(z),p(z))$, where $\theta(z)=\phi-\vartheta$ and the midpoint $p(z)$ is determined by Eq. \eqref{eq:def-front-type} when $z$ is front-type and by Eq. \eqref{eq:def-back-type} when $z$ is back-type. The velocity  $\dot{q}(z)$ is given by Eq. \eqref{eq:lin-bij}. Throughout the paper, we use this convention to specify the initial states for the smooth dynamics following an impact.  To this end, we proceed to define pre-impact states and post-impact states. We begin with the front-type case. If $z\in \I_f$, then expressing $h$ in the $z$-coordinates  gives
\begin{equation}\label{eq:h-dif-fro}
 \frac{1}{ R}dh_{q(z)}(\dot{q}(z)) =  \ell\omega \sin\vartheta + v\cos\vartheta.  
\end{equation}
Hence, the spaces of regular front-type pre-impact and post-impact states are
\begin{equation}\label{eq:def-reg-fro-im}
    \begin{split}
        \mathcal{RI}_f^- & \coloneqq  \Big\{ z \in \I_f : 0 < \ell\omega \sin\vartheta + v\cos\vartheta\Big\},\\
        \mathcal{RI}_f^+ & \coloneqq  \Big\{ z \in \I_f : 0 > \ell\omega \sin\vartheta + v\cos\vartheta\Big\}.\\
    \end{split}
\end{equation}
Finally, if $z\in \I_b$, we obtain
\begin{equation}\label{eq:h-dif-back}
\frac{1}{ R} dh_{q(z)}(\dot{q}(z)) =  \ell\omega \sin\vartheta - v\cos\vartheta .    
\end{equation}
Thus, the spaces of regular pre-impact and post-impact states of back-type are defined by
\begin{equation}\label{eq:def-reg-bac-im}
    \begin{split}
        \mathcal{RI}_b^- & \coloneqq  \Big\{ z\in \I_b : 0 <\ell\omega \sin\vartheta - v\cos\vartheta\Big\},\\
        \mathcal{RI}_b^+ & \coloneqq  \Big\{ z\in \I_b : 0 > \ell\omega \sin\vartheta - v\cos\vartheta\Big\}.\\
    \end{split}
\end{equation}
Therefore, the spaces of regular pre-impact and post-impact states are defined by
$$ \mathcal{RI}^-:= \mathcal{RI}^-_f\cup \mathcal{RI}^-_b,\quad \mathcal{RI}^+:= \mathcal{RI}^+_f\cup \mathcal{RI}^+_b. $$
The spaces of tangential-impact states of front-type and back-type are defined by
\begin{equation}\label{eq:def-tan}
    \begin{split}
        \mathcal{TI}_f & \coloneqq   \Big\{ z \in \I_f : 0 = \ell\omega \sin\vartheta + v\cos\vartheta\Big\},\\
        \mathcal{TI}_b & \coloneqq \Big\{ z\in \I_b : 0 = \ell\omega \sin\vartheta - v\cos\vartheta\Big\}.
    \end{split}
\end{equation}

The space of tangential-impact states is therefore
\begin{equation*}
    \begin{split}
        \mathcal{TI} \coloneqq  & \mathcal{TI}_f \cup \mathcal{TI}_b .
    \end{split}
\end{equation*}
\begin{remark}\label{rmk:tan-imp}
    The tangential-impact states have the following properties. 
    \begin{itemize}
    \item Eq. \eqref{eq:def-tan} implies that tangential-impact states with $v = 0$ must satisfy $\vartheta = \pm\frac{\pi}{2}$. 
    \item Eqs. \eqref{eq:def-fro-po} and \eqref{eq:def-bac-po} imply that there are no tangential-impact states with $\omega = 0$.
    \end{itemize}  
\end{remark}

Consequently, the spaces of pre-impact and post-impact states are
$$ \I^- = \mathcal{RI}^-  \cup \mathcal{TI},\;\;\text{and}\;\; \I^+ = \mathcal{RI}^+  \cup \mathcal{TI}. $$
Finally, the space of impact states is
$$ \I = \I^-\cup \I^+.$$
Now that we have formally defined the impact states, we present some basic properties of the coupled circle map.
\begin{remark}
Eqs. \eqref{eq:def-reg-fro-im}, \eqref{eq:def-reg-bac-im}, and \eqref{eq:def-tan} imply that   $\mathrm{Ro}:\I^+\to \I^-$ is well defined.    
\end{remark}

Assuming temporarily that Theorem \ref{the:rot-map} holds, we obtain the following corollary, which describes the types of consecutive impacts.
\begin{corollary}\label{def:cor-rot-map}
    Let $z_i^+$ and $z_{i+1}^-$ denote the $i$-th post-impact state and the $(i+1)$-st pre-impact state, respectively.  Then consecutive impacts satisfy the following relations:
    \begin{itemize}
        \item If $z_i^+\in \mathcal{RI}_f^+$, then $z_{i+1}^-\in \mathcal{RI}_b^-$, and if $z_i^+\in \mathcal{RI}_b^+$, then $z_{i+1}^-\in \mathcal{RI}_f^-$.
        \item If $z_i^+\in \mathcal{TI}_f$, then $z_{i+1}^-\in \mathcal{TI}_b$, and if $z_i^+\in \mathcal{TI}_b$, then $z_{i+1}^-\in \mathcal{TI}_f$.
    \end{itemize}
\end{corollary}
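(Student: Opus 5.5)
The plan is to read the corollary off from Theorem \ref{the:rot-map} and the explicit form of $\mathrm{Ro}$. By Theorem \ref{the:rot-map}, $z_{i+1}^- = \mathrm{Ro}(z_i^+) = ([\phi]+\Omega(z_i^+),[-\vartheta+\pi],\omega,v)$. Two features of this formula matter. The quasi-velocities $(\omega,v)$ are unchanged, and the relative angle is sent to $\vartheta' := \pi-\vartheta$. So the task reduces to tracking how the defining inequalities of $\I_f$, $\I_b$, $\mathcal{RI}^\pm_{f,b}$ and $\mathcal{TI}_{f,b}$ behave under $\vartheta\mapsto\pi-\vartheta$ with $(\omega,v)$ fixed. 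The shift in $\phi$ plays no role, because every one of these sets is defined only through $(\vartheta,\omega,v)$.

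The key identities are $\cos(\pi-\vartheta) = -\cos\vartheta$ and $\sin(\pi-\vartheta)=\sin\vartheta$. From the first, the condition $\ell/R<\cos\vartheta$ defining $\I_f$ in Eq. \eqref{eq:def-fro-po} becomes $\ell/R<-\cos\vartheta'$, which is the condition defining $\I_b$ in Eq. \eqref{eq:def-bac-po}. The same argument sends $\I_b$ to $\I_f$. So front-type post-impact states are followed by back-type pre-impact states, and vice versa.

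For the regularity and sign conditions, take $z_i^+\in\mathcal{RI}_f^+$, so that $\ell\omega\sin\vartheta+v\cos\vartheta<0$. In the new variables this quantity equals $\ell\omega\sin\vartheta' - v\cos\vartheta'$, which is exactly the expression for $\frac1R dh$ in the back-type case, Eq. \eqref{eq:h-dif-back}. We therefore have $\ell\omega\sin\vartheta'-v\cos\vartheta'<0$. The conclusion requires that $z_{i+1}^-\in\mathcal{RI}_b^-$, and the definition in Eq. \eqref{eq:def-reg-bac-im} demands a positive sign. This is the one step I expect to need care.

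A sign mismatch at this point would indicate one of two things. Either the convention of $\mathrm{Ro}$ implicitly includes a sign (for instance through the orientation reversal $\theta\mapsto\theta+\pi$ relating the front and back labels, which would flip the sign of $v$), or the correct comparison is $\vartheta\mapsto \vartheta+\pi$ combined with the transformation of $\sin$. I would resolve this by recomputing directly. Since the quasi-velocities are constant along the smooth flow, $\frac{d}{dt}h$ along the trajectory must be negative right after the impact at $t_i$ and positive right before the impact at $t_{i+1}$. The function $h$ is taken at the back point near $t_{i+1}$ and at the front point near $t_i$. So $z^-_{i+1}$ is automatically pre-impact by the very definition of pre-impact states in Subsection \ref{ap:hyb-sys}: the trajectory approaches the boundary from inside, which forces $dh\ge0$. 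It is regular if and only if $z_i^+$ was regular, and this is the substantive point. I would establish it by noting that the quantity $|\ell\omega\sin\vartheta\pm v\cos\vartheta|$ is preserved under $\mathrm{Ro}$ by the identities above, with the sign fixed by the inward/outward argument. The case $z_i^+\in\mathcal{RI}_b^+$ is symmetric.

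For tangential states the argument is cleaner, since the sign issue disappears. The condition $\ell\omega\sin\vartheta+v\cos\vartheta=0$ transforms into $\ell\omega\sin\vartheta'-v\cos\vartheta'=0$. Together with $\I_f\to\I_b$, this gives $\mathcal{TI}_f\to\mathcal{TI}_b$, and symmetrically $\mathcal{TI}_b\to\mathcal{TI}_f$. This completes both items of the corollary.
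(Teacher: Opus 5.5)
Your route is the paper's. You assume Theorem~\ref{the:rot-map}, so that $z_{i+1}^-=\mathrm{Ro}(z_i^+)$, and read the inclusions off from how the defining conditions behave under $\vartheta\mapsto\vartheta':=\pi-\vartheta$ with $(\omega,v)$ fixed. The paper's proof simply asserts $\mathrm{Ro}(\mathcal{RI}_f^+)\subset\mathcal{RI}_b^-$, $\mathrm{Ro}(\mathcal{RI}_b^+)\subset\mathcal{RI}_f^-$ and the tangential analogues. Your front/back step and your tangential step are correct.

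The sign mismatch you found is genuine, and it comes from the paper, not from your algebra: Eq.~\eqref{eq:h-dif-back} has the wrong sign. With $p_b=R(\cos\phi,\sin\phi)$ and $\dot p_b=v(\cos\theta,\sin\theta)+\ell\omega(\sin\theta,-\cos\theta)$ one finds
\[
\frac{d}{dt}\|p_b\|^2=2\langle p_b,\dot p_b\rangle=2R\,(v\cos\vartheta-\ell\omega\sin\vartheta),
\]
whose sign is opposite to that in Eq.~\eqref{eq:h-dif-back}; the factor $2$ is immaterial. For example, $\phi=0$, $\theta=\pi$, $\omega=0$, $v=1$ is a back-type state moving away from the wall. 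Yet $\ell\omega\sin\vartheta-v\cos\vartheta=1>0$ places it in $\mathcal{RI}_b^-$ as printed. This conflicts with Remark~\ref{rem:angl-rep-ome-zer}, which puts states with $\omega=0$, $v>0$, $\vartheta\in(\frac{\pi}{2},\frac{3\pi}{2})$ among the post-impact states. So Eq.~\eqref{eq:def-reg-bac-im} interchanges $\mathcal{RI}_b^-$ and $\mathcal{RI}_b^+$. As printed, the inclusion $\mathrm{Ro}(\mathcal{RI}_f^+)\subset\mathcal{RI}_b^-$ asserted in the paper's proof is false; the image lies in the printed $\mathcal{RI}_b^+$.

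What your proposal lacks is this diagnosis. Neither of your candidate explanations is right: $\mathrm{Ro}$ keeps $(\omega,v)$ and acts by $\vartheta\mapsto\pi-\vartheta$, exactly as you used it. Nor can the inward/outward argument ``fix'' the sign while Eq.~\eqref{eq:h-dif-back} stands. Your algebra then gives $dh<0$ at $\mathrm{Ro}(z_i^+)$, while the pre-impact property gives $dh>0$. That is a contradiction between Theorem~\ref{the:rot-map} and the definitions, not a proof.

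The missing step is the direct recomputation of $dh$ at a back-type state, which you announced but did not carry out. Once the sets are corrected to $\mathcal{RI}_b^{-}=\{z\in\I_b:\ell\omega\sin\vartheta-v\cos\vartheta<0\}$ and $\mathcal{RI}_b^{+}=\{z\in\I_b:\ell\omega\sin\vartheta-v\cos\vartheta>0\}$, your identity $\ell\omega\sin\vartheta+v\cos\vartheta=\ell\omega\sin\vartheta'-v\cos\vartheta'$ yields both inclusions of the first item at once. Your physical argument (type from $\cos$, regularity from the absolute value, sign from $z_{i+1}^-$ being pre-impact) then becomes a consistent cross-check rather than a needed step.
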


\begin{proof}
     Eqs. \eqref{eq:def-reg-fro-im}, \eqref{eq:def-reg-bac-im}, and \eqref{eq:def-tan} imply the following relations:
    \begin{equation*}
    \begin{split}
        \mathrm{Ro}&: \mathcal{RI}_f^+ \to \mathcal{RI}_b^-,\; \;\;\; \mathrm{Ro}: \mathcal{RI}_b^+ \to \mathcal{RI}_f^-,  \\
    \end{split}
    \end{equation*} 
    and
        \begin{equation*}
    \begin{split}
        \mathrm{Ro}&: \mathcal{TI}_f \to \mathcal{TI}_b,\;\;\; \;\;\; \mathrm{Ro}: \mathcal{TI}_b \to \mathcal{TI}_f. \\
    \end{split}
    \end{equation*} 
    This proves the corollary.
\end{proof}
Hence, front-type and back-type impacts necessarily alternate. In other words, a smooth trajectory cannot exhibit two consecutive impacts of the same type.

We conclude this section by noting that the midpoint of the knife-edge moves along a circle when $\omega\neq 0$, and along a straight line when $ \omega = 0$, as discussed in more detail in the next section. Therefore, the knife-edge billiard is well-posed in the sense that the motion remains inside the disk after a tangential impact.


\section{The Coupled Circle Map}\label{sec:the-2}

In this section, we prove Theorem \ref{the:rot-map}. To this end, we first introduce the function $\Omega:\I^+\to \mathbb{S}^1$. We begin by considering the following subset:
\begin{equation}\label{eq:def-sin-im-v-neq-zer}
    \mathcal{RI}_v^+ \coloneqq  \{ z^+\in \mathcal{RI}^+: \omega = 0\}.
\end{equation}

 With this definition in place, $\Omega:\I^+\to \mathbb{S}^1$ is given by
    \begin{equation}\label{eq:def-rot-map}
        \Omega(z^+) \coloneqq  \begin{cases}
            [-2\sgn(\omega)\alpha(z^+)],\;\;&\text{if}\;z^+\in \mathcal{I}^+\setminus \mathcal{RI}_v^+, \\
            [-2\vartheta+\pi],\;\;&\text{if}\;z^+\in \mathcal{RI}_v^+.\\
        \end{cases}
    \end{equation}
    Here, $\sgn:\R\setminus\{0\} \to \{\pm 1\}$ denotes the sign function, and $\alpha:\I^+\setminus \mathcal{RI}_v^+ \to [0,\pi]$ is the continuous $SO(2)$-invariant function introduced in the following subsection.

Our strategy for proving Theorem \ref{the:rot-map} is to establish the identity $\mathrm{Ro}(z^+_i)= z^-_{i+1}$ by proving the corresponding relations for the $\phi$-coordinate and then the $\vartheta$-coordinate. 

To determine the $\phi$-coordinate, we consider the $i$-th post-impact state $z_i^+$ and the $(i+1)$-st pre-impact state $z_{i+1}^-$ and prove
\begin{equation}\label{eq:gen-indet-1}
\begin{split}
         [\phi_{i+1}] & = [\phi_{i}] + \Omega(z_i^+) .
\end{split}
\end{equation}

To determine the $\vartheta$-coordinate, we consider the states $z_i^+$ and $z_{i+1}^-$ and prove that
\begin{equation}\label{eq:gen-indet-2}
\begin{split}
    \vartheta_{i+1} & = -\vartheta_i -\pi(2k+1),\;\text{for some}\;k \in \mathbb{Z}.
\end{split}
\end{equation}
Finally, interpreting Eq. \eqref{eq:gen-indet-2} modulo $2\pi$, we obtain $ [\vartheta_{i+1}] = [-\vartheta_{i}+\pi]$, which, together with Eq. \eqref{eq:gen-indet-1}, yields the desired result.
 
The proofs of the identities in Eqs. \eqref{eq:gen-indet-1} and \eqref{eq:gen-indet-2} rely on elementary Euclidean geometry. To justify the geometric constructions involved, we first establish the relations between front-type and back-type impacts stated in Corollary \ref{def:cor-rot-map} without appealing to Theorem \ref{the:rot-map}. 
Consequently, the proof of Theorem \ref{the:rot-map} is the most technical part of the paper, as it requires a separate analysis of the different dynamical regimes.

The proof is organized by dynamical regime. Subsection \ref{subsec:gen-imp-sta} treats the generic case $\omega v \neq 0$, while Subsections \ref{sub-sec:imp-st-sing-ome-neq-zer} and \ref{subsec:smo-sing-dy} consider the singular cases $\omega \neq 0$, $v=0$ and $\omega =0 $, $v \neq 0$, respectively. 


\subsection{Construction of the Function \texorpdfstring{\(\alpha\)}{} }

Although this construction may initially appear artificial, its natural application becomes evident in Subsections \ref{subsec:gen-imp-sta} and \ref{sub-sec:imp-st-sing-ome-neq-zer}.

\begin{figure}
    \centering
    \includegraphics[width=0.47\linewidth]{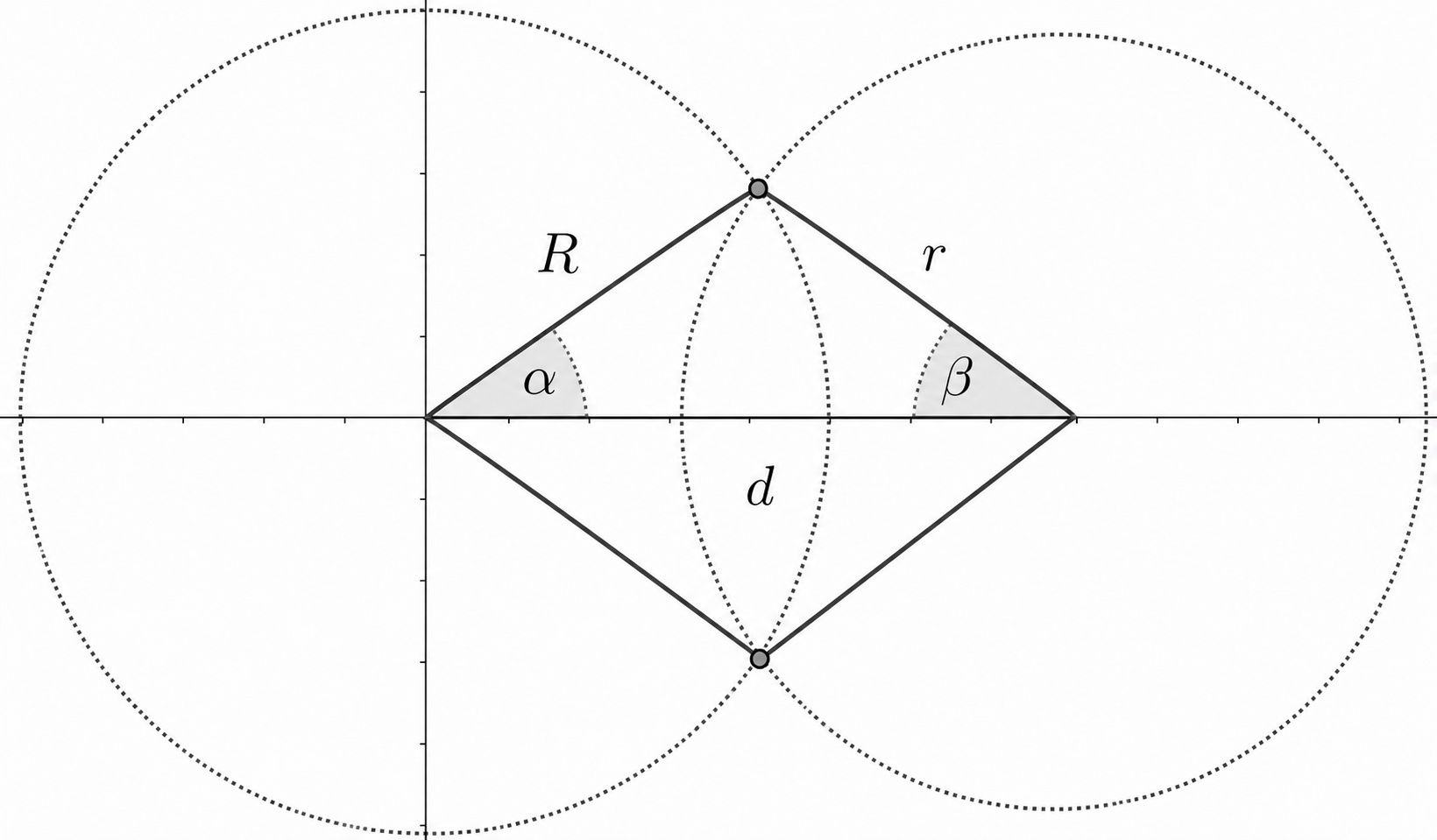}\quad
    \includegraphics[width=0.32\linewidth]{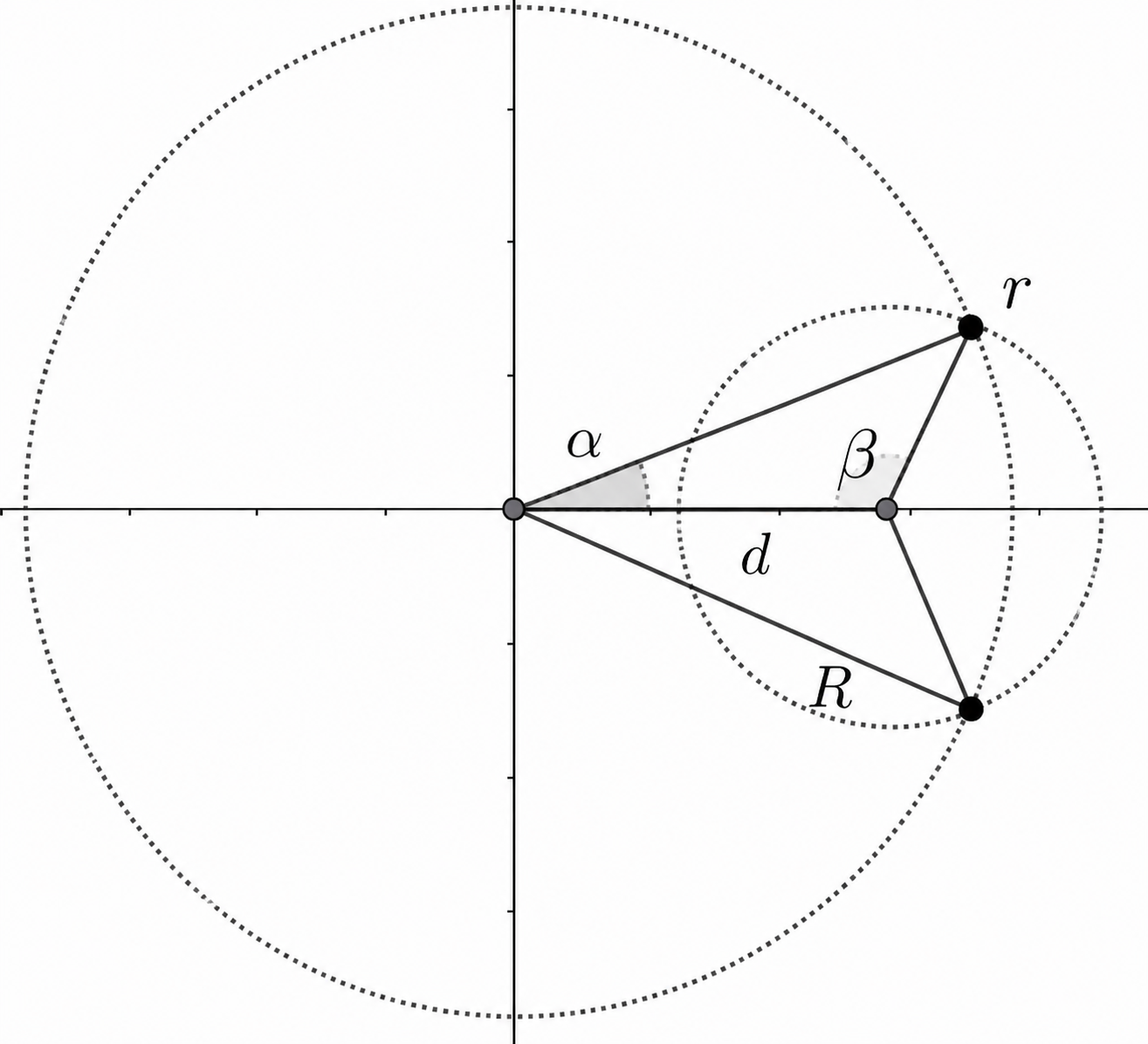}
    \caption{Both images display the circles with radii $R$ and $r$, together with the angles $\alpha$ and $\beta$ that determine their intersection points. The left image shows the case $\beta \in (0,\frac{\pi}{2})$, whereas the right image shows the case $\beta \in (\frac{\pi}{2},\pi)$. }
    \label{fig:circle-1}
\end{figure}

This construction relies on the following classical geometric formulas. Consider two circles of radii $R$ and $r$ whose centers are separated by a distance $d$. The circles intersect in two distinct points, forming a lens-shaped region, if and only if $|R-r| < d<R+r$. The intersection is internally tangent if $|R-r| = d$.  Furthermore, the angles $\alpha$ and $\beta$  shown in Fig. ~\ref{fig:circle-1} are given by
    \begin{equation}\label{eq:def-al-bet}
        \begin{split}
            \alpha  =  \arccos\big( \frac{d^2+R^2-r^2}{2dR}\big),\quad\text{and} \quad\beta  = \arccos\big( \frac{d^2-R^2+r^2}{2dr}\big). \\
        \end{split}
    \end{equation}
    For the internally tangent intersection, we have $\alpha = 0$ and $\beta = \pi$. 
    

    Our goal is to use the above formulas to construct the functions $\alpha,\beta:\I^+\setminus \mathcal{RI}^+_v \to [0,\pi]$; $\beta$ plays a fundamental role in defining the time-of-flight function. In our setting, $R$ is a constant determined by the disk's size, and $d,r:\I^+\setminus \mathcal{RI}^+_v \to [0,\infty)$ are auxiliary functions defined as follows.

  Consider an impact state $z^+\in \I^+\setminus \mathcal{RI}^+_v$. In the next two subsections, we consider the smooth dynamics $(\theta(t),p(t))$ determined by the impact state $z^+$, and we show that the midpoint $p(t)$ traces an arc centered at
  \begin{equation}\label{eq:def-cent}
  \mathrm{c}(z^+) \coloneqq \frac{v}{\omega}(- \sin(\phi-\vartheta),\cos(\phi-\vartheta)) +p(z^+) \in \R^2.  
  \end{equation}
  The first auxiliary function $d:\I^+\setminus \mathcal{RI}^+_v \to [0,\infty)$ is the quantity $d(z^+) \coloneqq \|\mathrm{c}(z^+) \|$. We now derive an explicit expression for $d$. Suppose first that $z^+\in \I_f^+$. Using Eq. \eqref{eq:def-front-back}, we express $\mathrm{c}(z^+)$ as follows:
$$\mathrm{c}(z^+) = R(\cos\phi,\sin\phi) + \ell (\cos(\phi-\vartheta),\sin(\phi-\vartheta)) + \frac{v}{\omega}(-\sin(\phi-\vartheta),\cos(\phi-\vartheta)).$$
We then compute  $\|\mathrm{c}(z^+)\|$ to obtain
\begin{equation*}
  d_f(z^+) \coloneqq \sqrt{R^2+\ell^2+(\frac{v}{\omega})^2+2R\ell \cos(\vartheta)+2\frac{R v}{\omega}\sin\vartheta}.  
\end{equation*}
Analogously, for $z^+\in \I_b$,  we obtain
\begin{equation*}
  d_b(z^+) \coloneqq \sqrt{R^2+\ell^2+(\frac{v}{\omega})^2-2R\ell \cos(\vartheta)+2\frac{R v}{\omega}\sin\vartheta}.  
\end{equation*}
Therefore, we define $d:\I^+\setminus \mathcal{RI}^+_v \to [0,\infty) $
as 
\begin{equation}\label{eq:d-for-fron-type}
    d(z^+) := \begin{cases}
        d_f(z^+) & \text{if} \; z^+ \in \mathcal{I}^+_f \setminus \mathcal{RI}^+_v,\\
        d_b(z^+) & \text{if} \; z^+ \in \mathcal{I}^+_b \setminus \mathcal{RI}^+_v.\\
    \end{cases}
\end{equation}

The second auxiliary function $r:\mathcal{I}^+\setminus\mathcal{RI}^+_v \to [\ell,\infty)$ is defined by
\begin{equation}\label{eq:def-rad-mot}
    r(z^+) \coloneqq  \sqrt{\big(\frac{v}{\omega}\big)^2+\ell^2}. 
\end{equation}

By construction, $d$ and $r$ are continuous and $SO(2)$-invariant since they are independent of $\phi$.


    \begin{definition}\label{def:alp-bet-gam-func}
        The functions $\alpha,\beta: \mathcal{I}^+\setminus\mathcal{RI}^+_v \to [0,\pi]$ are defined by Eqs. \eqref{eq:def-al-bet}. Here, $d:\mathcal{I}^+\setminus\mathcal{RI}^+_v \to [0,\infty)$ and $r:\mathcal{I}^+\setminus\mathcal{RI}^+_v \to [\ell,\infty)$ are auxiliary functions defined by Eqs. \eqref{eq:d-for-fron-type} and \eqref{eq:def-rad-mot}, respectively. We emphasize that $\alpha$ and $\beta$ are unoriented angles and are invariant under rotations. 
    \end{definition}

    The functions $\alpha$ and $\beta$ are smooth on $\mathcal{I}^+\setminus(\mathcal{RI}^+_v\cup \mathcal{TI})$ and  take values in $(0,\pi)$. Moreover, they extend continuously to $\mathcal{TI}$, where $\alpha(z^+) = 0$ and $\beta(z^+) = \pi$ if $z^+ \in \mathcal{TI}$. 
    

\subsection{Generic Case }\label{subsec:gen-imp-sta}

In the generic case, the kinetic energy is the sum of the translational and rotational kinetic energies. Let $z^+\in \mathcal{I}^+$ be a post-impact state with $0 \neq \omega v$. Then the solution to the nonholonomic equations of motion with initial state $z^+$ is given by
\begin{equation*}\label{eq:mot-gen}
\theta(t)  = \omega t+\theta(z^+), \quad p(t) = \mathrm{c}(z^+) + \frac{v}{\omega}\big(\sin\theta(t),-\cos\theta(t)\big).
\end{equation*}
Here $\mathrm{c}(z^+)\in \R^2$ is a constant point defined by Eq. \eqref{eq:def-cent}. It follows that the midpoint $p(t)$ of the knife-edge moves along a circle centered at $\mathrm{c}(z^+) $ with radius $|\frac{v}{\omega}|$.

The points $p_f$ and $p_b$ provide a convenient way to define the impact states. However, when $v\neq 0$, the distinction between them is dynamically irrelevant in the following sense: the transformation
\begin{equation}\label{eq:fron-back-sym}
     (\theta,p,\omega,v) \to (\theta+\pi,p,\omega,-v)
\end{equation}
interchanges the front and back points while leaving the trajectory traced by the midpoint $p(t)$ unchanged. Thus, from a physical point of view, the dynamically relevant distinction is not between the front and back points, but between the leading and trailing points. Accordingly, we define
\begin{equation}\label{eq:lead-point}
    \begin{split}
  p_l(t) \coloneqq  p(t) +\sgn(v) \ell (\cos\theta(t),\sin\theta(t)), \\
  p_t(t) \coloneqq  p(t) - \sgn(v) \ell (\cos\theta(t),\sin\theta(t)). 
    \end{split}
\end{equation}
We refer to $p_l(t)$ and $p_t(t)$ as the \textbf{leading} and \textbf{trailing} points of the knife-edge. Since these points are intrinsically defined by the dynamics and are invariant under the transformation in Eq. \eqref{eq:fron-back-sym}, they each move along a circle centered at $\mathrm{c}(z^+)$ with radius $r(z^+)$, where \(c\) and \(r\) are defined in Eqs. \eqref{eq:def-cent} and \eqref{eq:def-rad-mot}, respectively. 

We denote by $\gamma$ the unoriented angle $\angle p(t)\mathrm{c}(z^+)p_l(t)$; see Fig. \ref{fig:leading-point}. More precisely, the function $\gamma: \mathcal{I}^+\setminus\mathcal{RI}^+_v \to [0,\frac{\pi}{2}]$ is given by
\begin{equation}\label{eq:gam-def}
    \gamma(z^+) \coloneqq  \arcsin\Big(\frac{\ell}{r}\Big). 
\end{equation}
The function $\gamma$ is well defined since $0<\ell\leq r$, and it is $SO(2)$-invariant. By symmetry of the knife-edge, the angle $\gamma(z^+)$ is also $\angle p_t(t)\mathrm{c}(z^+)p(t)$.

Before continuing, we record two observations about the generic smooth dynamics that will be used throughout this section.

\begin{remark}\label{rem:omega-sign} The generic smooth dynamics have the following properties. 
    \begin{itemize}
        \item To motivate the terminology \enquote{leading point} and \enquote{trailing point,} observe that the following identity holds; see Fig. \ref{fig:leading-point}:  
        $$p_l(t) = p_t(t+\frac{2\gamma}{|\omega|}) .$$
        \item The circular motion of the knife-edge is clockwise when $~\omega<0$ and counterclockwise when $~\omega>0$. This dichotomy plays a fundamental role, as will become apparent in the proof of Eq. \eqref{eq:def-rot-map}. 
    \end{itemize}
\end{remark}

\begin{figure}
    \centering
    \includegraphics[width=0.39\linewidth]{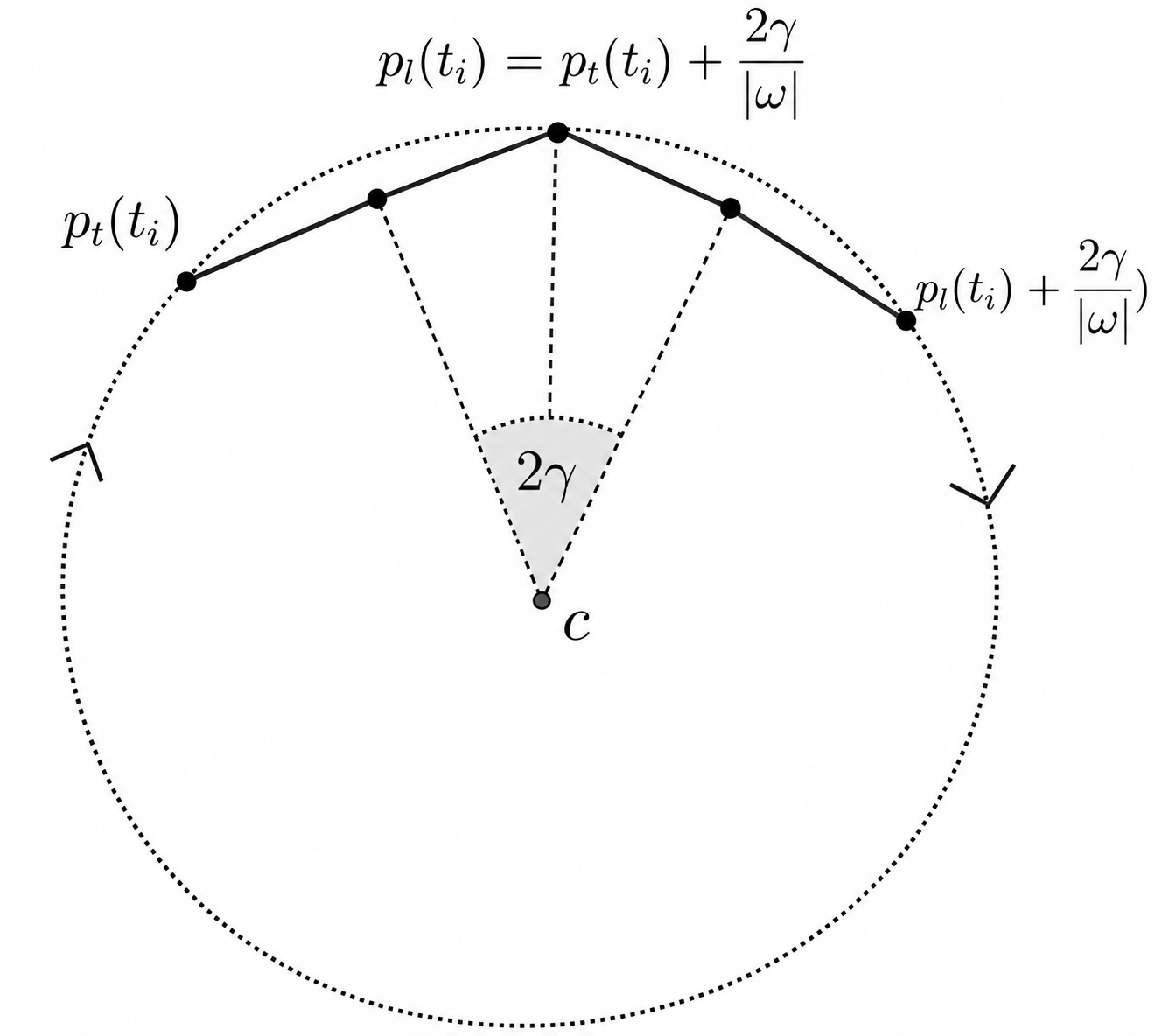}
    \quad
    \includegraphics[width=0.39\linewidth]{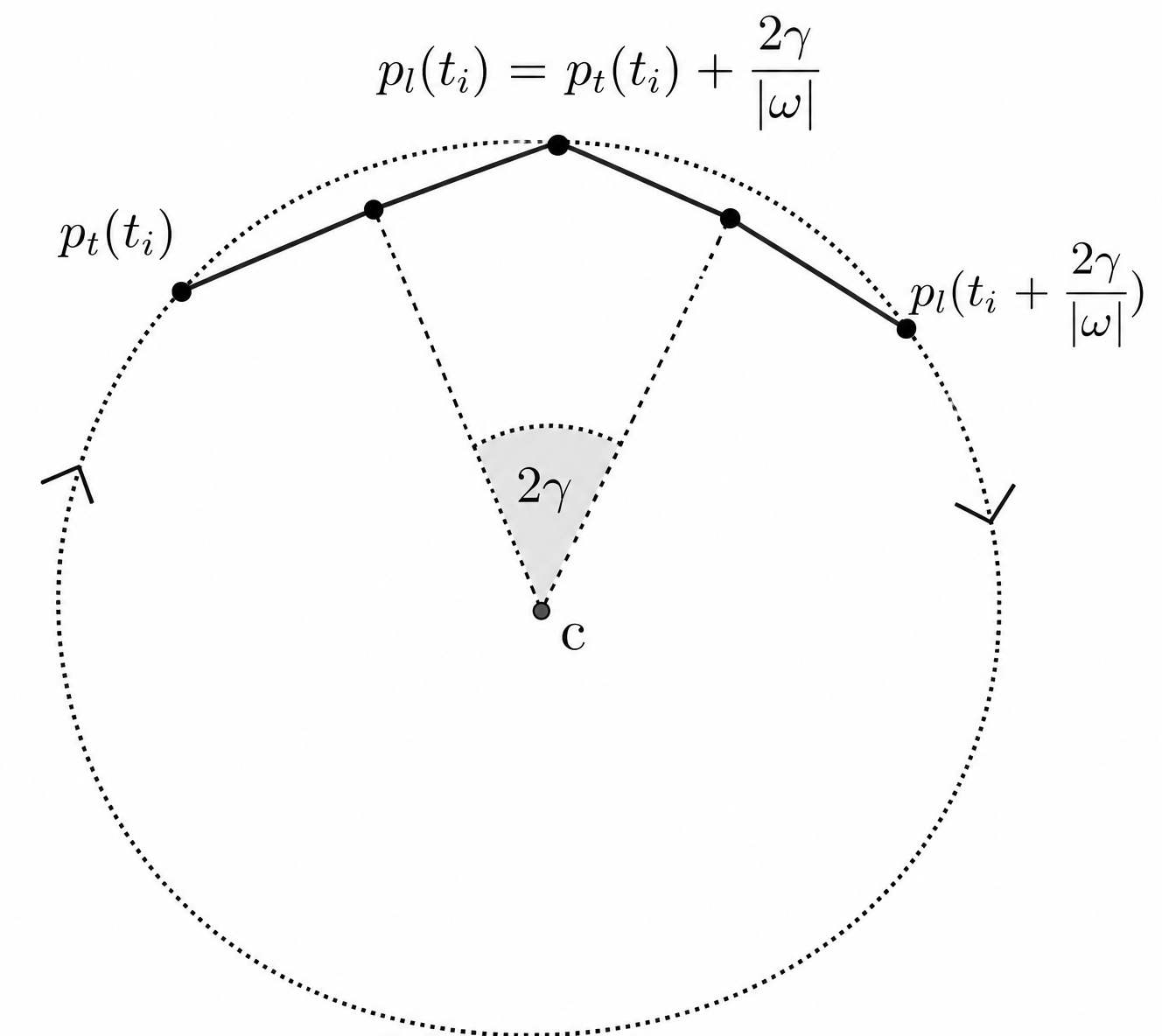}
    \caption{ The left image shows the definition of the angle $\gamma$. The right image illustrates the leading point.  }
    \label{fig:leading-point}
    \end{figure}

By Eq. \eqref{eq:lead-point}, $p_l(t) = p_f(t)$ and $p_t(t) = p_b(t)$ when $v>0$, whereas $p_l(t) = p_b(t)$ and $p_t(t) = p_f(t)$ when $v<0$. Furthermore, we say that an impact state $(q,\dot{q})$ is of \textbf{leading type} if the leading point lies on the impact surface, and of \textbf{trailing type} if the trailing point lies on the impact surface. It follows that the spaces of regular pre-impact and post-impact states of leading type in the generic case are defined by
\begin{equation}\label{eq:def-reg-led-im}
    \begin{split}
        \mathcal{I}_l^- & \coloneqq  \big\{ z \in \mathcal{I}_f^- : 0 <v \;\&\;\omega \neq 0\big\} \cup \big\{ z \in \mathcal{I}_b^- : v <0 \;\&\;\omega \neq 0 \big\},\\
        \mathcal{I}_l^+ & \coloneqq  \big\{ z \in \mathcal{I}_f^+ : 0<v \;\&\;\omega \neq 0 \big\} \cup \big\{ z \in \mathcal{I}_b^+ : v <0 \;\&\;\omega \neq 0 \big\}.\\
    \end{split}
\end{equation}
In addition, the spaces of regular pre-impact and post-impact states of trailing type in the generic case are defined by
\begin{equation}\label{eq:def-reg-tra-im}
    \begin{split}
        \mathcal{I}_t^- & \coloneqq  \big\{ z \in \mathcal{I}_f^- : v<0 \;\&\;\omega \neq 0 \big\} \cup \big\{ z \in \mathcal{I}_b^- : 0 <v  \;\&\;\omega \neq 0\big\},\\
        \mathcal{I}_t^+ & \coloneqq  \big\{ z \in \mathcal{I}_f^+ : v <0 \;\&\;\omega \neq 0 \big\} \cup \big\{ z \in \mathcal{I}_b^+ : 0 <v  \;\&\;\omega \neq 0 \big\}.
    \end{split}
\end{equation}
Therefore, the spaces of pre-impact states and post-impact states for the generic smooth dynamics are
$$ \mathcal{I}_g^-\coloneqq  \mathcal{I}_l^- \cup \mathcal{I}_t^- ,\;\;\text{and}\;\;  \mathcal{I}_g^+\coloneqq  \mathcal{I}_l^+\cup \mathcal{I}_t^+.$$

We begin by establishing the relationship between the $i$-th post-impact and the $(i+1)$-st pre-impact states for the leading-type and trailing-type cases.

          \begin{proposition} \label{lem:led-tra-rel}
        Let $z_i^+\in \mathcal{I}^+_g$ and $z_{i+1}^-\in\mathcal{I}^-_g$ denote the $i$-th post-impact state and the $(i+1)$-st pre-impact state, respectively. The following statements hold:
         \begin{itemize}
             \item If $z_i^+ \in \mathcal{I}^+_l$, then $z_{i+1}^-\in \mathcal{I}^-_t$.
             \item If $z_i^+\in\mathcal{I}^+_t$, then $z_{i+1}^-\in\mathcal{I}^-_l$.
         \end{itemize}
     \end{proposition}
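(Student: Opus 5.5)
Throughout the flight, the leading and trailing points $p_l(t),p_t(t)$ move on the same circle $\Gamma$ of radius $r=r(z_i^+)$ centered at $\mathrm{c}=\mathrm{c}(z_i^+)$. By Remark \ref{rem:omega-sign}, the trailing point follows the leading point along $\Gamma$ with angular lag $2\gamma$, that is, $p_t(t+2\gamma/|\omega|)=p_l(t)$. Since $\omega$ and $v$ are conserved during the smooth motion, $\sgn(v)$ and hence the leading/trailing labels do not change between impacts. The proof therefore reduces to understanding which of the two points first exits the disk along the arc of $\Gamma$. The key steps, in order, are as follows.

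First I would describe the set $A=\Gamma\cap\mathbb{D}$. If $\Gamma$ lies entirely in the open disk, there is no next impact. Otherwise $\Gamma$ meets $\mathbb{S}^1(R)$ in two points (or one point in the tangential case), and $A$ is a single closed arc of angular width $2\beta$. Here $\beta$ is the angle in Eq. \eqref{eq:def-al-bet}, measured at $\mathrm{c}$ between $\mathrm{c}$'s direction toward the origin and the intersection points. Parametrize $\Gamma$ by the polar angle $\psi$ about $\mathrm{c}$, oriented in the direction of motion $\sgn(\omega)$. Then $A=[\psi_{\mathrm{in}},\psi_{\mathrm{out}}]$, where $\psi_{\mathrm{in}}$ is the entry endpoint and $\psi_{\mathrm{out}}$ the exit endpoint with respect to the flow.

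Second, a configuration is admissible, i.e.\ the knife-edge lies in the disk, only if both endpoints lie in $A$; since the knife is a chord of $\Gamma$, convexity of the disk makes this also sufficient. Write the angular positions as $\psi_l(t)$ and $\psi_t(t)=\psi_l(t)-2\gamma$, both increasing at rate $|\omega|$ in the chosen orientation. Admissibility then means $\psi_{\mathrm{in}}\le \psi_t(t)<\psi_l(t)\le\psi_{\mathrm{out}}$, which requires $2\gamma\le 2\beta$.

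Third, at a post-impact instant one endpoint lies on $\partial\mathbb{D}$, so it sits at $\psi_{\mathrm{in}}$ or $\psi_{\mathrm{out}}$. The post-impact condition $dh(\dot q)\le 0$ means that this endpoint is moving inward or tangentially. For a point moving forward along $\Gamma$, being at $\psi_{\mathrm{out}}$ means moving outward; in the regular case this is excluded. Hence the touching endpoint is at $\psi_{\mathrm{in}}$, and by the ordering it must be the trailing point. The next impact is the first time the forward-most point reaches $\psi_{\mathrm{out}}$, which is the leading point. This gives the second bullet, $z_i^+\in\mathcal{I}^+_t\Rightarrow z_{i+1}^-\in\mathcal{I}^-_l$.

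For the first bullet, $z_i^+\in\mathcal{I}^+_l$, I would use time reversal rather than redo the argument. The map $(q,\dot q)\mapsto(q,-\dot q)$ sends $(\omega,v)$ to $(-\omega,-v)$, exchanges the leading and trailing roles because $\sgn(v)$ flips, and swaps pre- and post-impact states. Alternatively, the same arc argument works directly. A leading point sitting on the boundary and moving inward can only occur at the tangential endpoint or in the degenerate ordering, and I would need to check carefully that it forces the subsequent exit to be by the trailing point.

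The main obstacle is exactly this sign bookkeeping. It consists of matching the analytic condition $\ell\omega\sin\vartheta\pm v\cos\vartheta<0$ from Eqs. \eqref{eq:def-reg-fro-im}--\eqref{eq:def-reg-bac-im} with the geometric entry/exit dichotomy on $\Gamma$, while tracking how $\sgn(\omega)$ determines the orientation of $\Gamma$. I would first compute $\frac{d}{dt}\|p_l\|^2$ and $\frac{d}{dt}\|p_t\|^2$ at the impact instant in the $z$-coordinates. I would then verify that their signs correspond to the endpoint being at the entry or the exit of $A$, using Eq. \eqref{eq:lead-point} to translate between front/back and leading/trailing.
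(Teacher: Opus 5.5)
\textbf{Genuine error in Step 2.} Your Step 2 asserts that admissibility means $\psi_{\mathrm{in}}\le\psi_t<\psi_l\le\psi_{\mathrm{out}}$, and this is false; the error propagates. Both endpoints lying in the arc $A=[\psi_{\mathrm{in}},\psi_{\mathrm{out}}]$ does \emph{not} force the forward arc of length $2\gamma$ from $p_t$ to $p_l$ to lie in $A$. When $\beta+\gamma\ge\pi$, the knife-edge can straddle the exterior arc $\Gamma\setminus A$. In that configuration $p_l$ sits just past $\psi_{\mathrm{in}}$ and $p_t$ sits at $\psi_{\mathrm{in}}+2\pi-2\gamma\in A$; the chord is still in the disk by convexity, so the configuration is admissible.

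This configuration is not degenerate: it is exactly where the regular leading-type post-impact states live. That is why the time of flight on $\mathcal{I}^+_l$ is $2(\beta+\gamma-\pi)/|\omega|$, and why Remark \ref{rem:int-ext-int} says the arc faces the boundary. For a concrete case, take $R=1$, $\ell=\tfrac{1}{2}$ and $z=([0],[-\pi/4],1,\tfrac{1}{10})$. It satisfies $\cos\vartheta>\ell/R$ and $\ell\omega\sin\vartheta+v\cos\vartheta<0$ with $v>0$, so $z\in\mathcal{RI}^+_f\cap\mathcal{I}^+_l$.

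Consequently, your Step 3 conclusion that the touching endpoint at a post-impact instant ``must be the trailing point'' is false. So is the premise of your alternative direct route for the first bullet, namely that a leading point on the boundary moving inward occurs only tangentially or in a degenerate ordering. Taken at face value, your framework makes the first bullet vacuous for regular states, which misdescribes the dynamics.

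\textbf{What survives.} The architecture can be kept with the following repairs.
\begin{enumerate}
\item \emph{Second bullet.} Do not invoke the general ordering; derive it from the hypothesis. A regular trailing post-impact point sits at $\psi_{\mathrm{in}}$, so $p_l$ sits at $\psi_{\mathrm{in}}+2\gamma$. Admissibility of $p_l$, together with $0<2\gamma<2\pi$, gives $2\gamma<2\beta$; equality would put both endpoints on $\mathbb{S}^1(R)$. So the ordering holds at $t_i$ and persists until $p_l$ reaches $\psi_{\mathrm{out}}$. The tangential case $\beta=\pi$ is the same.
\item \emph{First bullet.} Your time-reversal reduction is valid once you spell out the contrapositive. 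Reversal maps a flight to a flight, since the equations are reversible. It swaps pre- and post-impact states, since $dh(-\dot q)=-dh(\dot q)$, and it swaps leading and trailing, since $\sgn(v)$ flips. Every generic impact state is exactly one of the two types. So a leading post-impact state followed by a leading pre-impact state would reverse into a trailing post-impact state followed by a trailing pre-impact state, contradicting the second bullet.
\item \emph{Direct alternative for the first bullet.} In the corrected picture, $p_t$ reaches $\psi_{\mathrm{out}}$ after angular travel $2(\beta+\gamma-\pi)<2\beta$, before $p_l$ can.
\end{enumerate}

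\textbf{Comparison with the paper.} The paper treats the leading case directly: by $p_t(t_i+2\gamma/|\omega|)=p_l(t_i)$, the trailing point would otherwise re-enter the disk through the leading point's entry point, so it must exit first. It treats the trailing case by counting intersections of $\Gamma$ with $\mathbb{S}^1(R)$. Once corrected, your arc parametrization is more explicit than the paper's argument. It also yields both formulas for $\Delta t_g$ as a by-product.
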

     \begin{proof}
         Consider the smooth dynamics of the knife-edge with initial state $z_i^+$ at time $t_i$. As noted, the leading point of the knife-edge moves along a circle. The proof relies on the fact that the intersection of two circles consists of exactly two points when the circles intersect transversely and of a single point when they are tangent. With this observation in mind, we consider the leading and trailing cases separately.
         
         \textbf{(Leading-type case)} If $z_i^+\in \mathcal{I}^+_l$, then, by Remark \ref{rem:omega-sign}
         $$ p_t(t_i+\frac{2\gamma}{|\omega|}) = p_l(t_i).$$ 
         Thus, at time $t_i+\frac{2\gamma}{|\omega|}$, the trailing point is on the impact surface, and its velocity is directed into the disk. This contradicts the fact that the knife-edge remains in the disk during smooth dynamics. Hence, the trailing point must impact the surface before $t_i+\frac{2\gamma}{|\omega|}$, and the leading point cannot impact before the trailing point. 
         
         \textbf{(Trailing-type case)} If $z_i^+\in \mathcal{I}^+_t$, then the trace  $p_t((t_i,t_i+\Delta t])$ lies inside the disk exactly when $p_l(t_i+\Delta t)$ lies inside the disk. Indeed, suppose for contradiction that a segment of $p_t((t_i,t_i+\Delta t])$ lies outside the disk while $p_l(t_i+\Delta t)$ lies inside the disk. Then, by Remark \ref{rem:omega-sign}, the trace $p_t([t_i,t_i+\Delta t+\frac{2\gamma}{|\omega|}])$ would intersect the boundary of the disk three times if $z_i^+\in \mathcal{I}^+_t\cap \mathcal{RI}^+$ or two times if $z_i^+\in \mathcal{I}_t^+\cap \mathcal{TI}$, a contradiction. Therefore, the trailing point cannot impact the boundary before the leading point. 
     \end{proof}

    \begin{lemma}\label{prop:gen-rot-map}
   Let  $z_i^+\in \I_g^+$ and $z_{i+1}^-\in \I_g^-$ denote the $i$-th post-impact state and the $(i+1)$-st pre-impact state, respectively. Then the identities in Eqs. \eqref{eq:gen-indet-1} and \eqref{eq:gen-indet-2} hold.
    \end{lemma}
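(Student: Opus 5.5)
Throughout, I use Proposition \ref{lem:led-tra-rel}: a leading-type post-impact state $z_i^+$ is followed by a trailing-type pre-impact state $z_{i+1}^-$, and vice versa. I also use the fact that in the generic regime both endpoints move on the circle of radius $r=r(z_i^+)$ centered at $\mathrm{c}=\mathrm{c}(z_i^+)$, with $d=\|\mathrm{c}\|$. The circle of radius $r$ about $\mathrm{c}$ meets $\mathbb{S}^1(R)$ in two points, or in one point in the tangential case. Seen from the origin, these two points lie at angular positions symmetric about the ray through $\mathrm{c}$, each at angle $\alpha$ from that ray; this is exactly how $\alpha$ is defined in Eq. \eqref{eq:def-al-bet}. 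The plan is to show that the impacting endpoint at time $t_i$ sits at one of these intersection points and the impacting endpoint at time $t_{i+1}$ sits at the other. Then $\phi$ changes by $\pm 2\alpha$, and the sign is fixed by the orientation of the circular motion.

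\textbf{Step 1 ($\phi$-coordinate).} The endpoint on the boundary at $t_i$ lies on both circles, so it is one of the two intersection points $s_\pm$. By the argument in Proposition \ref{lem:led-tra-rel}, the next contact is made by the other endpoint, and it occurs at the first time that endpoint reaches $\mathbb{S}^1(R)$. Both endpoints traverse the same circle of radius $r$, with the trailing point following the leading point with a delay of $2\gamma/|\omega|$. The arc of that circle inside the disk runs from one intersection point to the other. Hence the next impact point is the other intersection point, and no new intersection point can appear.

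It remains to identify which point is which, using Remark \ref{rem:omega-sign}. The motion is counterclockwise when $\omega>0$, and a point leaving the disk boundary inward along a counterclockwise circle exits at the intersection point that is clockwise from the entry point as seen from the origin, and symmetrically for $\omega<0$. I would verify this by writing $s_\pm=R(\cos(\psi\pm\alpha),\sin(\psi\pm\alpha))$, where $\psi$ is the argument of $\mathrm{c}$. I would then check the sign of the tangential velocity at the exit point, using the post-impact condition in Eqs. \eqref{eq:def-reg-fro-im}--\eqref{eq:def-reg-bac-im}. This gives $[\phi_{i+1}]=[\phi_i]-2\sgn(\omega)\alpha$, which is Eq. \eqref{eq:gen-indet-1}. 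In the tangential case $\alpha=0$, and $\phi$ is unchanged, as expected.

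\textbf{Step 2 ($\vartheta$-coordinate).} Let $s_i$ and $s_{i+1}$ be the two impact points; they are the same two points on both circles. Consider the knife-edge at the two impact times. Its endpoints are one point of $\{s_i,s_{i+1}\}$ together with a point on the circle of radius $r$ about $\mathrm{c}$. The configuration at $t_{i+1}$ is the mirror image of the configuration at $t_i$ under reflection in the line $L$ through $0$ and $\mathrm{c}$, with orientation reversed. This is because the reflection in $L$ preserves both circles, swaps $s_i$ with $s_{i+1}$, and conjugates the rotation about $\mathrm{c}$ to its inverse. Since the blade in the time-reversed motion is the same line segment, the blade direction $\theta$ maps to $2\psi-\theta$ up to a multiple of $\pi$, while $\phi$ maps to $2\psi-\phi$.

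Therefore $\vartheta_{i+1}=\phi_{i+1}-\theta_{i+1}\equiv-(\phi_i-\theta_i)=-\vartheta_i \pmod \pi$. The front/back label switches between consecutive impacts, by the alternation in Corollary \ref{def:cor-rot-map}, which is established directly by Proposition \ref{lem:led-tra-rel} together with Eq. \eqref{eq:lead-point} at fixed $\operatorname{sgn} v$. That switch corresponds to $\theta\mapsto\theta+\pi$, so the ambiguity is pinned down to an odd multiple of $\pi$. This gives Eq. \eqref{eq:gen-indet-2}.

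\textbf{Main obstacle.} The hardest step is the sign bookkeeping in Step 1, namely showing that the exit point is $\phi_i-2\sgn(\omega)\alpha$ rather than $\phi_i+2\sgn(\omega)\alpha$ uniformly over the four cases front/back and $v\gtrless0$. I would handle it by first using the symmetry in Eq. \eqref{eq:fron-back-sym} to reduce to the case of a leading-type post-impact state. Then I would compute the cross product of the entry point $s_i$ with the vector $\mathrm{c}$: its sign locates $s_i$ relative to the ray through $\mathrm{c}$. I would relate that sign to $\omega$ through the regular post-impact inequality $\ell\omega\sin\vartheta\pm v\cos\vartheta<0$.
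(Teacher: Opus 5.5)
Your proof is correct, but for Eq.~\eqref{eq:gen-indet-2} it takes a genuinely different route from the paper. For Eq.~\eqref{eq:gen-indet-1} it is essentially the paper's argument. The paper rotates so that $\mathrm{c}(z_i^+)$ lies on the positive $x$-axis and reads off $\phi_i=\mp\alpha_i$, $\phi_{i+1}=\pm\alpha_i$ from the figure, which is your entry/exit-point observation.

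For Eq.~\eqref{eq:gen-indet-2}, the paper chases angles in the triangles with vertices $0,s_i,A$ and $0,s_{i+1},A$ and eliminates their common angle $\sigma$. It does this separately for leading/trailing, regular/tangential, each sign of $\omega$, and several representatives of $\theta_i$, with $\theta_i=\pi$ treated on its own. Your reflection $\sigma_L$ in the line through $0$ and $\mathrm{c}$ is the symmetry underlying those two triangles. It gives $\vartheta_{i+1}\equiv-\vartheta_i \pmod{\pi}$ in every sub-case at once; the tangential case is included, since then $s_i=s_{i+1}\in L$. The parity then follows because $\sigma_L$ carries the endpoint impacting at $t_i$ to the position, at $t_{i+1}$, of the other physical endpoint. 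The paper's route is concrete and figure-driven; yours is uniform and explains why the identity holds.

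One step should be written out. That $\sigma_L$ preserves both circles and swaps $s_i,s_{i+1}$ does not by itself determine the image configuration. Two chords of length $2\ell$ of the circle of radius $r$ about $\mathrm{c}$ end at $s_{i+1}$. Let $\rho$ be the rotation about $\mathrm{c}$ taking the knife-edge at $t_i$ to the knife-edge at $t_{i+1}$. Then $\sigma_L\rho\sigma_L=\rho^{-1}$ makes $\rho^{-1}\sigma_L$ an involution. It sends the endpoint at $s_i$ to the other endpoint, so it sends that one back, and this is exactly your mirror claim.

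The sign bookkeeping you flag as the main obstacle is easier than you expect, and the reduction you propose does not work as stated. The symmetry \eqref{eq:fron-back-sym} leaves leading and trailing points invariant, so it cannot reduce to the leading case. You do not need it. The arc of the circle of radius $r$ lying inside the disk is the one through its point nearest the origin, at distance $|d-r|<R$. Hence a counterclockwise traversal always enters at angle $+\alpha$ from the ray through $\mathrm{c}$ and exits at $-\alpha$, regardless of front/back, the sign of $v$, or leading/trailing.
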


            \begin{figure}
  \centering
   \includegraphics[width=.48\linewidth]{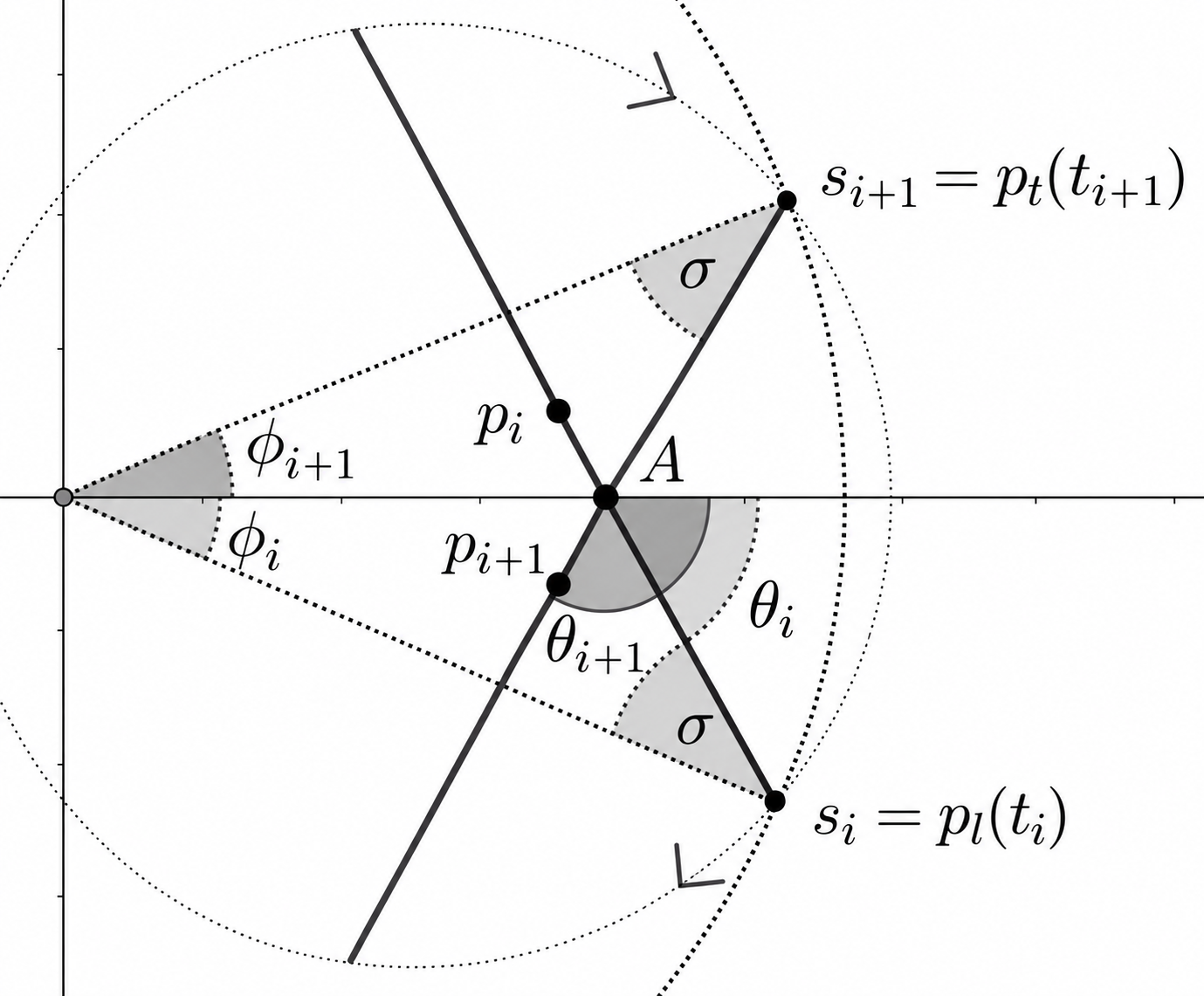}
  \includegraphics[width=.35\linewidth]{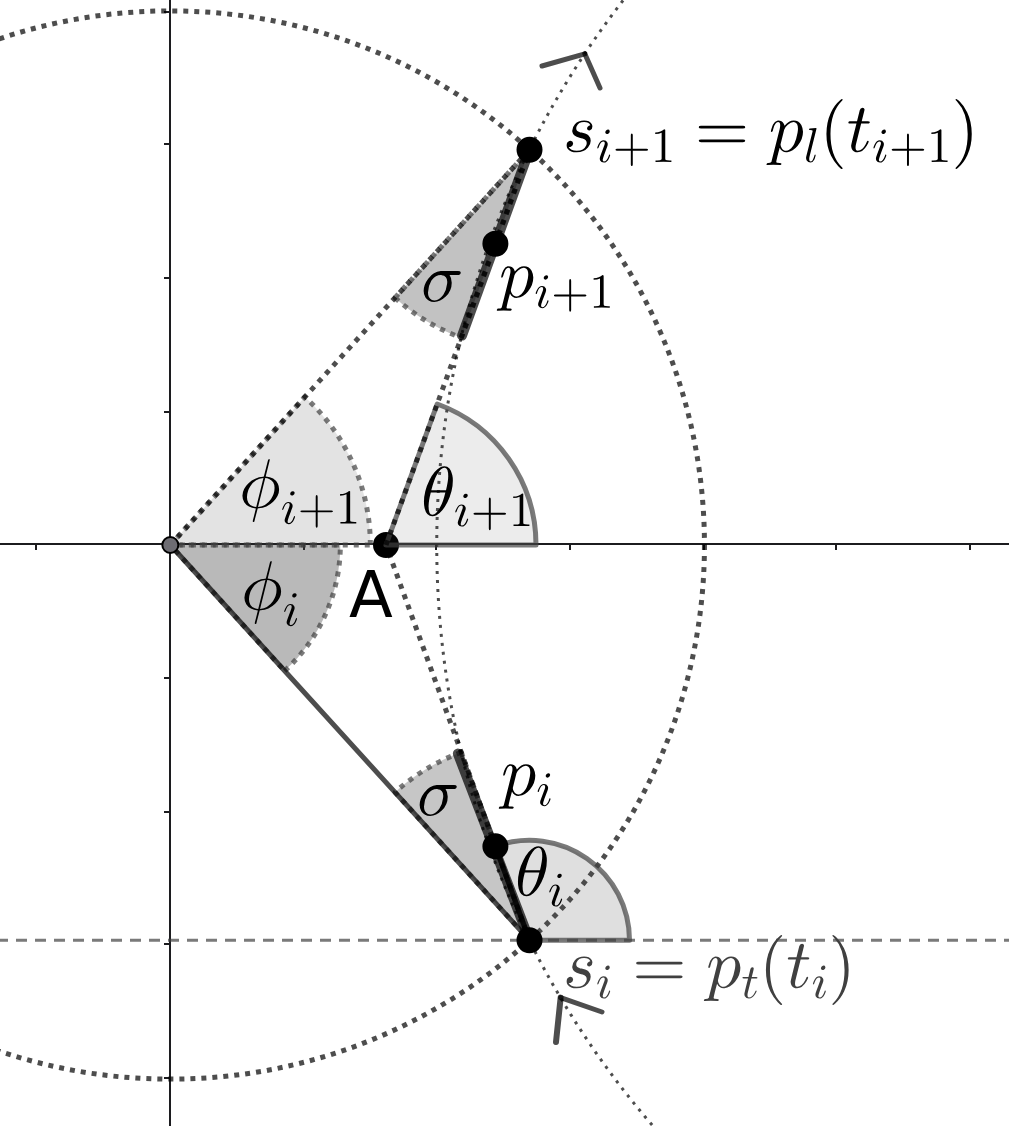}
    \caption{ The two panels illustrate the angle relations used in the proof of Eq \eqref{eq:gen-indet-2}. The left panel corresponds to the case $z^+_i\in \I_l^+\cap\mathcal{RI}^+$, while the right panel corresponds to the case $z^+_i\in \I_t^+\cap\mathcal{RI}^+$. }
    \label{fig:circle-3}
    \end{figure}

     \begin{proof}
     Consider the initial state $z_i^+\in \I_g^+$. Then the smooth dynamics describes circular motion centered at $\mathrm{c}(z_i^+)$. Using the rotational symmetry of the system, we may assume, without loss of generality, that $\mathrm{c}(z_i^+)$ lies on the positive $x$-axis, as shown in Fig. \ref{fig:circle-1}.

    Recall from Remark \ref{rem:omega-sign} that the circular motion is clockwise when $\omega_i<0$ and counterclockwise when $\omega_i>0$. After applying this rotation, the post-impact state $z_i^+$ has the following form:
    \begin{equation*}
            z_i^+ = \begin{cases}
                (-\alpha_i,\vartheta_i,\omega_i,v_i), \quad & \text{if}\; \omega_i<0,\\
                (\alpha_i,\vartheta_i,\omega_i,v_i) , \quad \;\;\;&\text{if}\; 0<\omega_i. 
            \end{cases}
    \end{equation*}
    With this setup, we first prove Eq. \eqref{eq:gen-indet-1} for
    $ z_{i}^+ \in \I_g^+$ with $\omega_i<0$. By construction, the impact points $s_i$ and $s_{i+1}$ in Fig. \ref{fig:circle-3} correspond to the states $z^+_i$ and $z_{i+1}^-$, respectively. From this construction, we  have $\phi_i = -\alpha_i$ and $\phi_{i+1} = \alpha_i$. Hence, 
    $$\phi_{i+1} = \phi_i+2\alpha_i = \phi_i-2\sgn(\omega_i)\alpha_i ,$$
    since $\sgn(\omega_i) = -1$. An analogous argument applies when $0<\omega_i$.

    To prove Eq. \eqref{eq:gen-indet-2}, we use an argument based on similar triangles, treating the cases $z_{i}^+ \in \mathcal{I}_l^+$ and $z_{i}^+ \in \mathcal{I}_t^+$ separately.

     \textbf{(Leading-type case)} We distinguish between the cases $z^+_i\in \I_l^+\cap\mathcal{RI}^+$ and $z^+_i\in \I_l^+\cap\mathcal{TI}$. First, consider a regular post-impact state $z_i^+\in\I_l^+\cap \mathcal{RI}^+$, and assume that $\omega_i<0$. By construction, we may choose a representative $\phi_i \in (-\pi,0)$. To determine the orientation of the knife-edge, we work in the original coordinate $\theta$. First suppose that $\theta_i$ admits a representative in $(-\pi,0)$. 
     
     On the one hand, the angles of the triangle with vertices at the origin, $s_i$ and $A$ in Fig. \ref{fig:circle-3} satisfy
    $$-\phi_i+(\pi+\theta_i) + \sigma = \pi.$$
    On the other hand, the angles of the triangle with vertices at the origin, $s_{i+1}$ and $A$ in Fig. \ref{fig:circle-3} satisfy
    $$ \phi_{i+1} - \theta_{i+1} + \sigma = \pi.   $$
    Subtracting these identities eliminates $\sigma$. Using the change of coordinates $\vartheta = \phi-\theta$, we then obtain
    $$ \vartheta_{i+1} = -\vartheta_i +\pi. $$ 
    The same argument applies when $\theta_i \in (0,\pi)$.  The case $\theta_i = \pi$ follows from the relations $\phi_i = -\alpha_i$, $\phi_{i+1} = \alpha_i$, and $\theta_{i+1} = 0$.  Indeed, 
     $$ \phi_{i+1}-\theta_{i+1} = \alpha_i  = \alpha_i +2 \pi =  \theta_i- \phi_i +\pi. $$
     Using the change of coordinates $\vartheta = \phi-\theta$ yields the desired identity.

     In Fig. \ref{fig:circle-3}, we assume that the trailing point is back-type. The symmetry in Eq. \eqref{eq:fron-back-sym} yields the identity when the leading point is front-type. An analogous argument applies when $0<\omega_i$.

     We now consider the case $z_i^+\in\I_l^+\cap \mathcal{TI}$, and assume that $\omega_i<0$. In this case $\phi_i = \phi_{i+1} = 0$, and Fig. \ref{fig:circle-tan-1} shows that 
     $$ -\theta_{i+1} = \theta_i+\pi.  $$
     As in the previous case, the change of coordinates $\vartheta = \phi-\theta$ yields the desired identity. The case $0<\omega_i$ is proved in the same way.

    \textbf{(Trailing-type case)} We distinguish between the cases $z^+_i\in \I_t^+\cap\mathcal{RI}^+$ and $z^+_i\in \I_t^+\cap\mathcal{TI}$. First, consider a regular post-impact state $z_i^+\in\I_t^+\cap\mathcal{RI}^+$, and assume that $\omega_i<0$. As before, we choose a representative $\phi_i \in (-\pi,0)$, and use $\theta$ to determine the orientation of the knife-edge. Assume that $\theta_i$ admits a representative in $(0,\pi)$. 
    
    On the one hand, the angles of the triangle with vertices at the origin, $s_i$ and $A$ in Fig. \ref{fig:circle-3} satisfy
    $$-\phi_i + \theta_i + \sigma  = \pi.$$
    On the other hand, the angles of the triangle with vertices at the origin, $s_{i+1}$ and $A$ in Fig. \ref{fig:circle-3} satisfy
    $$ \phi_{i+1} + (\pi - \theta_{i+1}) + \sigma = \pi.   $$
    Subtracting these identities eliminates $\sigma$. Using the change of coordinates $\vartheta = \phi-\theta$, we obtain
    $$ \vartheta_{i+1} = -\vartheta_i -\pi. $$ 
    We handle the remaining cases analogously to the corresponding cases in the leading-type argument. In particular, the case $z^+_i\in \I_t^+\cap\mathcal{TI}$ is illustrated in Fig. \ref{fig:circle-tan-1}.
    \end{proof} 

    \begin{figure}
  \centering
   \includegraphics[width=.43\linewidth]{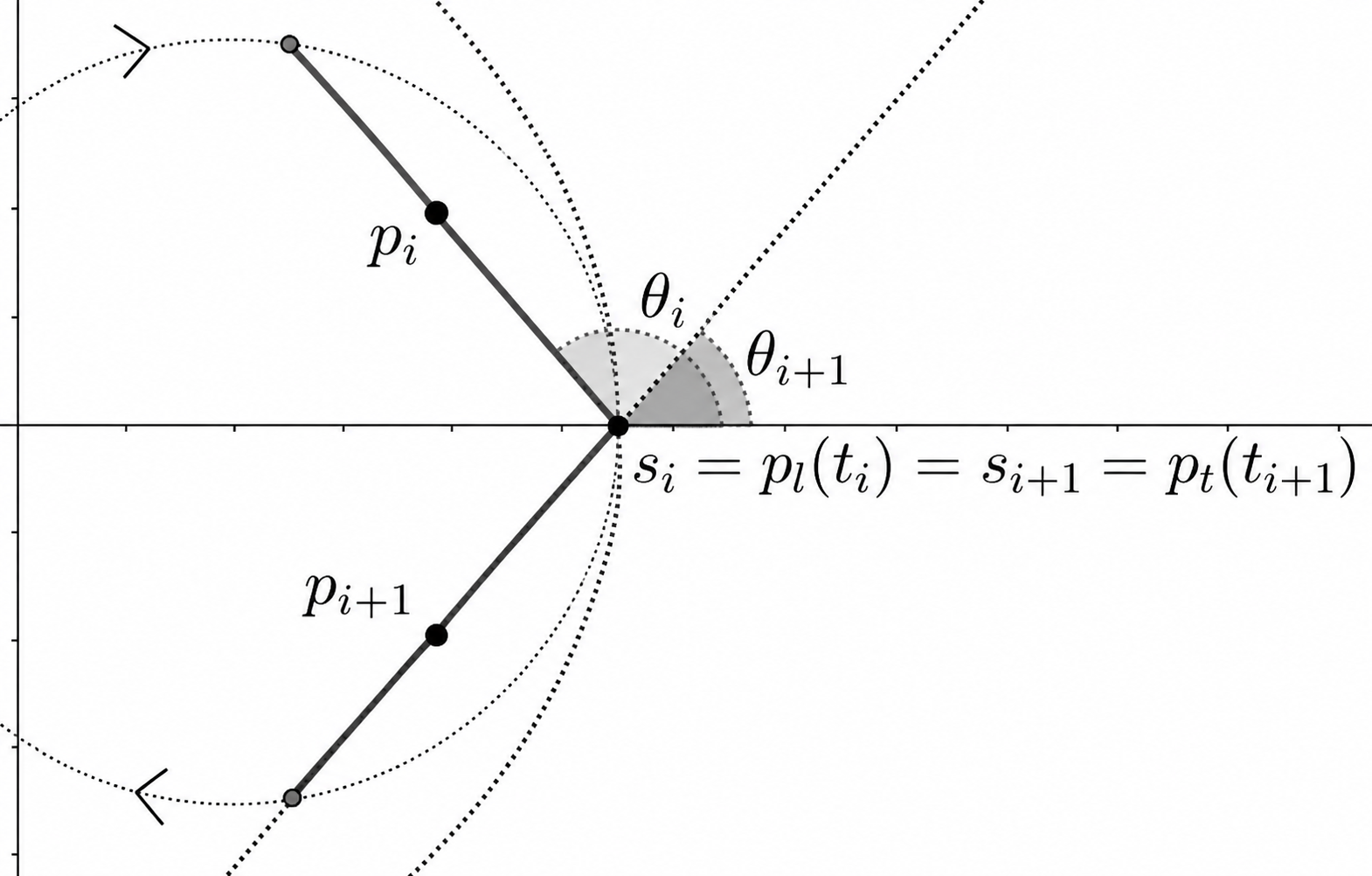}
  \includegraphics[width=.43\linewidth]{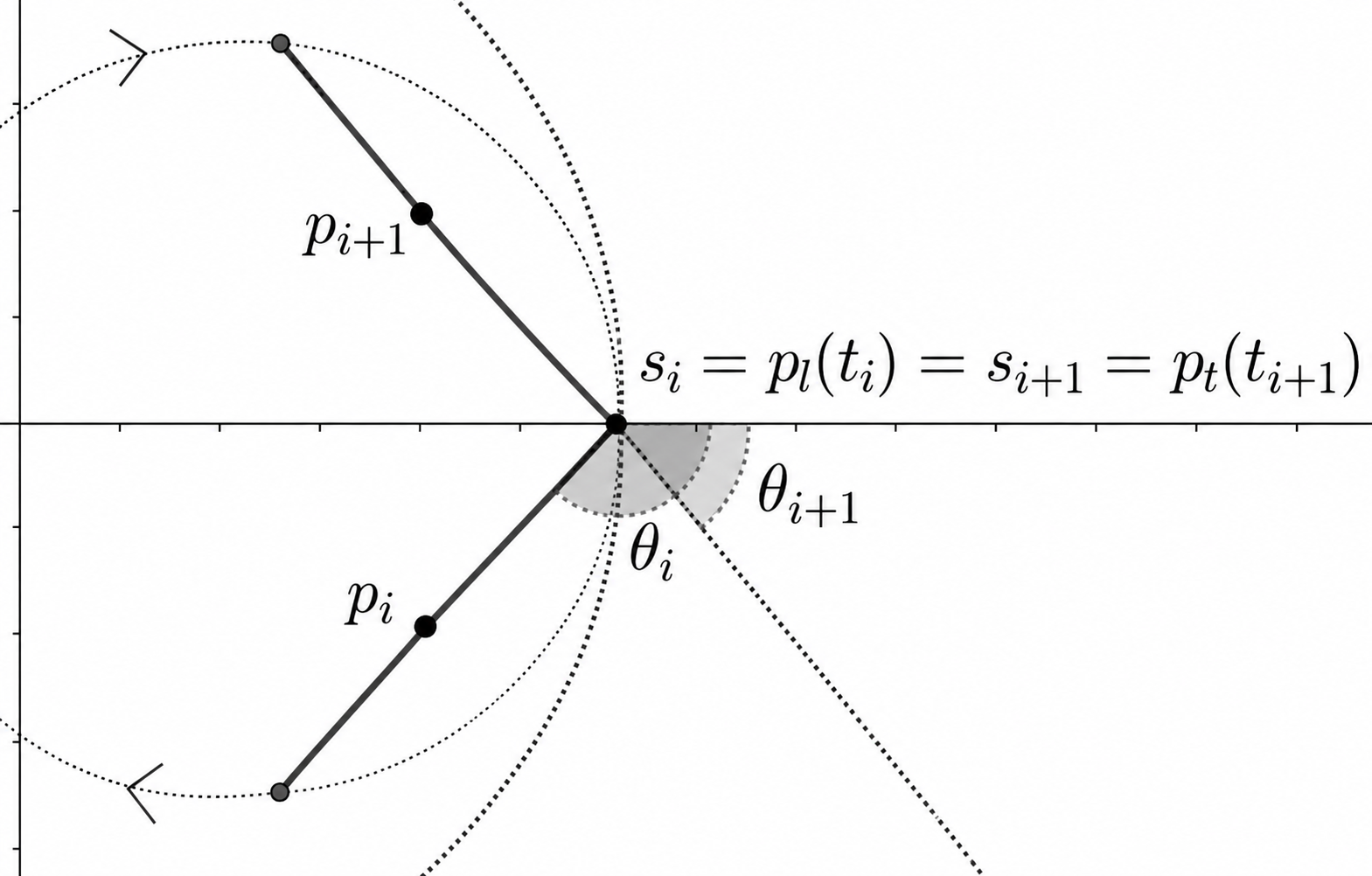}
    \caption{ The two panels illustrate the angle relations used in the proof of Eq. \eqref{eq:gen-indet-2}. The left panel corresponds to the case $z^+_i\in \I_l^+\cap\mathcal{TI}$, while the right panel corresponds to the case $z^+_i\in \I_t^+\cap\mathcal{TI}$. }
    \label{fig:circle-tan-1}
    \end{figure}

     Up to this point, only the angle $\alpha$ has been used. To compute the elapsed time $\Delta t_i$ between consecutive impacts, we also require the angles $\beta$ and $\gamma$. We define the generic time-of-flight function $\Delta t_g: \mathcal{I}^+_g\to (0,\infty)$ by
     $$  \Delta t_g(z^+) = \begin{cases}
     \frac{2(\beta(z^+)+\gamma(z^+)-\pi)}{|\omega|}, &\text{if}\; z^+ \in \I_l^+, \\
         \frac{2(\beta(z^+)-\gamma(z^+))}{|\omega|}, \;&\text{if}\; z^+ \in \I_t^+. \\
     \end{cases}  $$ 

     \begin{lemma}\label{lem:del-t-generic}
        If $z_i^+\in \I_g^+$, then $\Delta t_i :=\Delta t_g(z_i^+)$ is the elapsed time between the $i$-th and the $(i+1)$-st impacts.
     \end{lemma}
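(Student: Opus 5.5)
The elapsed time is the angle swept about the center of the motion divided by the angular speed. Since $\theta(t)=\omega t+\theta(z^+)$, every point rigidly attached to the knife-edge, in particular the leading and trailing points, rotates about $\mathrm{c}(z_i^+)$ with angular speed $|\omega|$. It therefore suffices to compute the unoriented angle, measured at $\mathrm{c}(z_i^+)$ along the direction of motion, between the impacting endpoint at time $t_i$ and the impacting endpoint at time $t_{i+1}$. That angle is then converted into a time by dividing by $|\omega|$. As in the proof of Lemma \ref{prop:gen-rot-map}, I rotate so that $\mathrm{c}(z_i^+)$ lies on the positive $x$-axis and assume $\omega_i<0$; the case $\omega_i>0$ follows by reflecting across the $x$-axis.

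The setup is the circle of radius $r(z_i^+)$ centered at $\mathrm{c}(z_i^+)$, traced by both endpoints, which meets $\mathbb{S}^1(R)$ at the two points $s_i$ and $s_{i+1}$, symmetric about the $x$-axis. By Eq. \eqref{eq:def-al-bet}, $\beta$ is the angle at $\mathrm{c}$ between the segment $\mathrm{c}s_{i+1}$ and the ray from $\mathrm{c}$ away from the origin. Hence the arc of the $r$-circle lying inside the disk subtends the angle $2\beta$ at $\mathrm{c}$, and the arc outside subtends $2\pi-2\beta$. By Remark \ref{rem:omega-sign} and Eq. \eqref{eq:gam-def}, the leading and trailing points are separated by the fixed angle $2\gamma$ at $\mathrm{c}$, with the leading point ahead.

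I then treat the two cases separately, using Proposition \ref{lem:led-tra-rel}.

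\textbf{(Trailing-type case).} Here $z_i^+\in\I_t^+$, so the trailing point is at $s_i$ at time $t_i$, and by Proposition \ref{lem:led-tra-rel} the next impact is by the leading point at $s_{i+1}$. At time $t_i$ the leading point is already $2\gamma$ ahead of $s_i$ along the inner arc. It therefore has to sweep $2\beta-2\gamma$ to reach $s_{i+1}$, which gives $\Delta t=2(\beta-\gamma)/|\omega|$.

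\textbf{(Leading-type case).} Here $z_i^+\in\I_l^+$, so the leading point is at $s_i$ at time $t_i$ and moves outward. The next impact is by the trailing point, which sits $2\gamma$ behind $s_i$, that is, outside the disk region of its circle when measured forward. The trailing point's forward path runs from its current position to $s_i$ ($2\gamma$), then across the inner arc, and it must impact at $s_{i+1}$. The consistent reading is that the trailing point starts inside on the inner arc, at angular distance $2\pi-2\gamma$ behind $s_i$ going forward, and must reach $s_{i+1}$. Measuring from the trailing point's starting position to $s_{i+1}$ gives the angle $2\beta-(2\pi-2\gamma)=2(\beta+\gamma-\pi)$, which matches the formula for $\Delta t_g$.

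The main obstacle is getting this bookkeeping right. One must check carefully which arc is traversed and confirm that no earlier intersection occurs; the latter is exactly the content of Proposition \ref{lem:led-tra-rel}. One must also check positivity, namely $\beta>\gamma$ in the trailing case and $\beta+\gamma>\pi$ in the leading case. The second inequality should follow because both endpoints being inside the disk forces the chord between them to lie within the inner arc. For tangential states, $\beta=\pi$ and the formulas reduce to $2\gamma/|\omega|$ and $2(\pi-\gamma)/|\omega|$, consistent with Remark \ref{rem:omega-sign}. That degenerate case I would verify directly from Fig. \ref{fig:circle-tan-1}.
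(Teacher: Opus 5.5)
Your approach matches the paper's. $|\Delta\theta_i|=|\omega_i|\,\Delta t_i$ is the angle swept about $\mathrm{c}(z_i^+)$, and you read it off from the inner arc of angle $2\beta$ and the leading/trailing offset $2\gamma$. Both final counts, $2\beta-2\gamma$ and $2\beta-(2\pi-2\gamma)$, are right. However, two statements you use to get there are false and should be corrected.

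\textbf{The direction defining $\beta$.} By the law of cosines in the triangle with vertices $0$, $\mathrm{c}$, $s$, Eq.~\eqref{eq:def-al-bet} gives $\beta=\angle 0\,\mathrm{c}\,s$. This angle is measured from the ray pointing \textit{toward} the origin. With your ``away from the origin'', the arc outside the disk would be the one subtending $2\beta$. That contradicts your next sentence and also the tangential value $\beta=\pi$.

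\textbf{The leading-type configuration.} At $t_i$ the leading point at $s_i$ moves \textit{into} the disk, since that is what post-impact means. So the forward direction from $s_i$ runs along the inner arc. The trailing point is $2\gamma$ behind, i.e.\ at forward angle $2\pi-2\gamma$ from $s_i$. It must lie strictly inside the disk, hence on that inner arc, and this gives $2\pi-2\gamma<2\beta$, which is exactly $\beta+\gamma>\pi$. It then reaches $s_{i+1}$ after sweeping $2(\beta+\gamma-\pi)$, before it could reach $s_i$. Note that the arc of angle $2\gamma$ from the trailing to the leading endpoint contains the whole outer arc.

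This argument should replace three things in your write-up. It replaces ``moves outward''. It replaces the path ``to $s_i$, then across the inner arc''. It also replaces the chord heuristic for $\beta+\gamma>\pi$, which is not the right reason. With this change you derive the configuration rather than choosing the reading that matches $\Delta t_g$.
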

     \begin{proof}
          For convenience, let $\beta_i:=\beta(z_i^+)$ and $\gamma_i:=\gamma(z_i^+)$. Since $\dot{\theta} = \omega_i$ along the smooth trajectory between the $i$-th and $(i+1)$-st impacts, define $\Delta \theta_i:=\theta_{i+1}-\theta_i$. Therefore, $|\Delta \theta_i| = |\omega_i|\Delta t_i$, and it suffices to compute $|\Delta \theta_i|$ in the cases $ z_i^+ \in \I_l^+$ and  $z_i^+ \in \I_t^+$.

         By rotational symmetry, we may assume, without loss of generality, that $\mathrm{c}(z_i^+)$ lies on the positive $x$-axis.

         First, consider  $ z_i^+ \in \I_l^+$. The midpoints $p_i$ and $p_{i+1}$ in Fig. \ref{fig:circle-2} correspond to the states $z_i^+$ and $z_{i+1}^-$, respectively. From this construction, the midpoint rotates through an unoriented angle $2(\beta_i+\gamma_i-\pi)$, as illustrated in Fig. \ref{fig:circle-2}. Hence, 
         $$|\Delta \theta_i| =  2(\beta_i+\gamma_i-\pi).$$

         We now consider the case $ z_i^+ \in \I_t^+$. Then the midpoints $p_i$ and $p_{i+1}$ in Fig. \ref{fig:circle-2} correspond to the states $z_i^+$ and $z_{i+1}^-$, respectively. From this construction, the midpoint rotates through an unoriented angle $2(\beta_i-\gamma_i)$, as illustrated in Fig. \ref{fig:circle-2}. Hence, $|\Delta\theta_{i}| = 2(\beta_i-\gamma_i)$.  
     \end{proof}

      \begin{remark}\label{rem:int-ext-int}
        The orientation of the arc connecting $p_i$ and $p_{i+1}$ characterizes the geometric distinction between leading-type and trailing-type post-impact states.  For a leading-type post-impact state, this arc faces the boundary of the disk, whereas for a trailing-type post-impact state, it faces the center of the disk; see Fig. \ref{fig:circle-2}. 
        
        This distinction determines the nature of the intersection between the smooth trajectory and the caustic curve appearing in Theorem \ref{thm:2}: the intersection is internally tangent in the leading-type case and externally tangent in the trailing-type case.
    \end{remark}

    \begin{figure}
    \centering
    \includegraphics[width=0.37\linewidth]{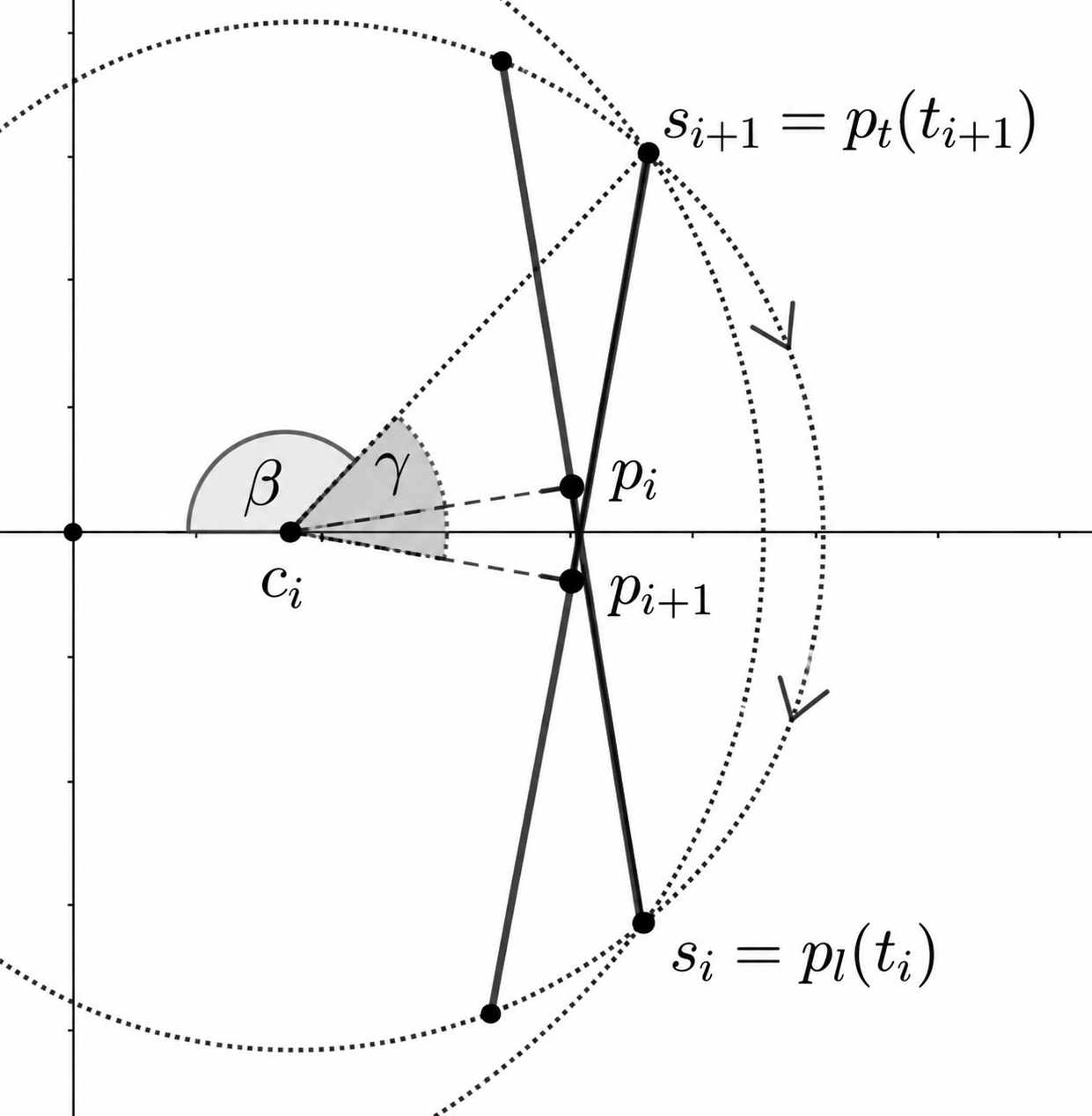}
    \quad
    \includegraphics[width=0.35\linewidth]{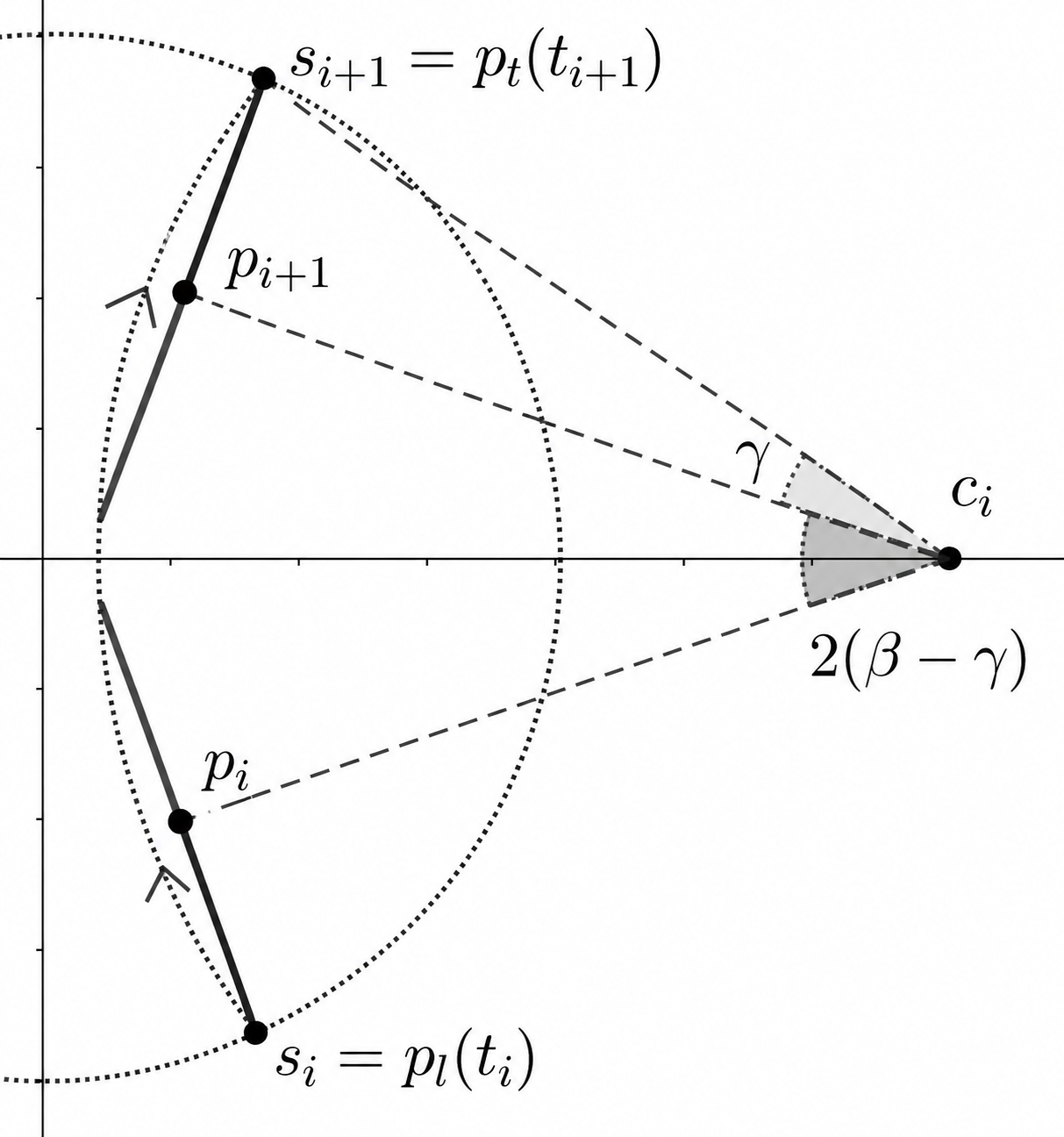}
    \caption{ Both panels illustrate the impact points $s_i$ and $s_{i+1}$, together with the arc of the midpoint trajectory between $p_i:=p(t_i)$ and $p_{i+1} := p(t_{i+1})$. In the first figure, the post-impact state $z_i^+$ is leading-type, and the corresponding angular displacement is $|\Delta \theta| = 2(\beta_i+\gamma_i-\pi)$. In the second figure, the post-impact state $z_i^+$ is trailing-type, and the angular displacement is $ |\Delta \theta| = 2(\beta_i-\gamma_i)$.  }
    \label{fig:circle-2}
    \end{figure}

\subsection{Singular Case \texorpdfstring{\(\omega \neq 0, v =0\)}{} }\label{sub-sec:imp-st-sing-ome-neq-zer}

In this case, the kinetic energy is purely rotational, and the notion of leading and trailing points is lost. We therefore use the front and back points to classify the impact states.

 Let $z^+\in \mathcal{I}^+$ be a post-impact state with $0 \neq \omega $ and $0=v$. Then the solution to the nonholonomic equations of motion with initial state $z^+$ is given by
\begin{equation*}\label{eq:mot-v-eq-zero}
    \theta(t)  = \omega t +\theta, \quad p(t) = \mathrm{c}(z^+).
\end{equation*}
Therefore, the midpoint $p(t)$ of the knife-edge is at rest, and the rigid body rotates about its midpoint. Consequently, the front and back points move along circles centered at  $\mathrm{c}(z^+)$ with radius $r(z^+) = \ell$.

For the singular case $\omega \neq  0$, $v=0$, define the pre-impact and post-impact state spaces by
\begin{equation}\label{eq:def-sin-im-om-neq-zer}
    \begin{split}
        \mathcal{I}_\omega^- & \coloneqq \big\{ z \in \mathcal{I}^- : v = 0 \big\},\\
        \mathcal{I}_\omega^+ &\coloneqq  \big\{ z \in \mathcal{I}^+ : v = 0 \big\}.
    \end{split}
\end{equation}
The space of tangential impact states with  singular smooth dynamics $\omega \neq 0$ is characterized as 
$$ \mathcal{TI}_\omega =\big\{ z\in \I: v= 0 \;\&\; \vartheta=\pm \frac{\pi}{2}\big\}.  $$ 
Note that the above condition is equivalent to the radial condition $\|\mathrm{c}(z^+)\| = R-\ell$.

\begin{proposition} \label{lem:fro-back-rel}
         Let $z_i^+$ and $z_{i+1}^-$ denote the $i$-th post-impact state and the $(i+1)$-st pre-impact state, respectively. The following relations hold:
         \begin{itemize}
             \item If $z_i^+ \in \mathcal{I}^+_\omega$ is front-type, then $z_{i+1}^-\in\mathcal{I}^-_\omega$ is  back-type.
             \item If $z_i^+\in \mathcal{I}^+_\omega$ is  back-type, then $z_{i+1}^-\in \mathcal{I}^-_\omega$ is  front-type.
         \end{itemize}
     \end{proposition}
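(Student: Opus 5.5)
In the singular regime $\omega\neq 0$, $v=0$, the midpoint is fixed at $p=\mathrm{c}(z_i^+)$ and the knife-edge rotates rigidly about it. The front and back points are antipodal on the circle $\partial B(p,\ell)$: $p_b(t)=p_f\big(t+\tfrac{\pi}{|\omega|}\big)$. I would treat the front-type case; the back-type case then follows by relabeling through $\theta\mapsto\theta+\pi$, which exchanges $p_f$ and $p_b$. Throughout I use the basic fact invoked in Proposition \ref{lem:led-tra-rel}: two distinct circles meet in at most two points, or in exactly one point when they are tangent.

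\textbf{Regular case.} Let $z_i^+\in\mathcal{I}^+_\omega\cap\mathcal{RI}^+_f$. Then $p_f(t_i)=s_i\in\mathbb{S}^1(R)$, and by Eq. \eqref{eq:def-reg-fro-im} with $v=0$ we have $\ell\omega\sin\vartheta_i<0$, so $p_f$ moves strictly into the disk. The circle $\partial B(p,\ell)$ therefore crosses $\mathbb{S}^1(R)$ transversally at $s_i$. Hence it meets $\mathbb{S}^1(R)$ in exactly two points, $s_i$ and a second point $s'$, and the arc traversed after $s_i$ lies inside the open disk until it reaches $s'$.

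Since $p_f$ and $p_b$ run along the same circle in the same direction, the first impact after $t_i$ occurs at the first time either of them reaches $s'$. The front point needs to travel the interior arc from $s_i$ to $s'$, of angular length $\alpha_0<2\pi$. The back point needs to travel from $p_b(t_i)$ to $s'$.

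The back point lies strictly inside at $t_i$ (impact-state condition) and is the antipode of $s_i$, so $p_b(t_i)$ lies on the interior arc. Thus the back point reaches $s'$ first, after travelling $\alpha_0-\pi$, provided $\alpha_0>\pi$. I would verify $\alpha_0>\pi$ directly from the strict inclusion of the antipode: if the interior arc were shorter than $\pi$, the antipode of $s_i$ would lie on the exterior arc, a contradiction. In particular, the front point cannot return to the boundary first. I would also check $p_f(t_{i+1})$ is strictly inside, so that $z_{i+1}^-$ is a genuine back-type impact state and not a two-endpoint contact; this holds because $p_f(t_{i+1})$ is the antipode of $s'$, which lies inside the disk by a symmetric argument.

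\textbf{Tangential case.} Let $z_i^+\in\mathcal{TI}_\omega$, so that $\vartheta=\pm\pi/2$ and $\|\mathrm{c}\|=R-\ell$. Then $\partial B(p,\ell)$ is internally tangent to $\mathbb{S}^1(R)$ at the single point $s_i$. The front point leaves $s_i$ and cannot touch the boundary again until it completes a full turn. The back point, starting at the antipode of $s_i$, reaches $s_i$ after a half turn, which gives a tangential back-type impact.

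\textbf{Main obstacle.} The delicate step is the arc-length comparison $\alpha_0>\pi$ in the regular case. I expect to make it rigorous either by the antipode argument above or by computing with Eq. \eqref{eq:def-al-bet} with $r=\ell$, showing $\beta>\pi/2$ from $d^2+\ell^2-R^2<0$. This inequality holds because the antipode is inside: $\|p-\ell u\|<R$ and $\|p+\ell u\|=R$, which averaging gives $d^2+\ell^2<R^2$.
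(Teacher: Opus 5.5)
Your argument is correct, but it follows a more explicit route than the paper's proof.

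\textbf{The paper's route.} The paper argues by contradiction at the half-turn time. Since $p_b(t_i+\pi/|\omega|)=p_f(t_i)$, and both endpoints rotate about the same fixed midpoint, the back point would reach $s_i$ with the same inward velocity the front point had at $t_i$. It would therefore be arriving from outside the disk, so some exit must occur before $t_i+\pi/|\omega|$. The tangential case is handled with the same half-turn identity. This argument uses only local information at $s_i$ and never locates the second intersection point. However, it leaves two things implicit:
\begin{enumerate}
\item why the exit is made by the back point rather than the front point;
\item why the front point is strictly inside at $t_{i+1}$, so that $z_{i+1}^-$ is a genuine single-endpoint impact state.
\end{enumerate}
Both need exactly your observation: the antipode of $s_i$ is strictly inside, so it lies on the interior arc.

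\textbf{What your route adds.} Splitting $\partial B(p,\ell)$ into the interior arc from $s_i$ to $s'$ and the complementary exterior arc settles both points at once. It also shows that the crossing at $s'$ is transversal, so $z_{i+1}^-$ is a regular pre-impact state. As a by-product it gives the time of flight $(\alpha_0-\pi)/|\omega|=(2\beta-\pi)/|\omega|$ of Lemma \ref{lem:del-t-omega}. The paper's version is shorter; yours is self-contained and closes those implicit steps.

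\textbf{A slip in the tangential case.} You wrote $\vartheta=\pm\pi/2$, which you took from the description of $\mathcal{TI}_\omega$ just before the proposition. With $v=0$, the tangency condition in Eq. \eqref{eq:def-tan} reduces to $\ell\omega\sin\vartheta=0$. So a front-type tangential state has $\vartheta=0$, meaning the blade lies along the outward normal. The value $\vartheta=\pm\pi/2$ would violate $\ell/R<\cos\vartheta$. Your argument only uses $\|\mathrm{c}\|=R-\ell$, i.e. internal tangency of $\partial B(p,\ell)$ and $\mathbb{S}^1(R)$ at $s_i$, and that is correct, so nothing else changes.
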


     \begin{proof}
         Suppose first that $z_i^+\in \mathcal{I}^+_\omega$ is front-type, and consider the smooth dynamics with initial state $z^+_i$  at time $t_i$; that is $p_f(t_i) \in \Si$. Using the identity $p_b(t_i+\frac{\pi}{|\omega|})=p_f(t_i)$, we conclude that at time $t_i+\frac{\pi}{|\omega|}$ the back point is on the impact surface. If $z_i^+\in \mathcal{I}^+_\omega \cap \mathcal{RI}^+$, then the back point lies on the impact surface with velocity directed inward. This contradicts the assumption that the knife-edge remains inside the disk during smooth dynamics, so the back point impacts the disk boundary before time $t_i+\frac{\pi}{|\omega|}$. If $z_i^+\in \mathcal{I}^+_\omega \cap \mathcal{TI}$, then the limiting state as $t\to (t_i+\frac{\pi}{|\omega|})^-$ is a back-type tangential impact state. The proof of the back-type case is analogous, with the roles of the front and back points interchanged.
     \end{proof}

    \begin{lemma}\label{prop:sin-v-zero-rot-map}
   Let  $z_i^+\in \I_\omega^+$ and $z_{i+1}^-\in \I_\omega^-$ denote the $i$-th post-impact state and the $(i+1)$-st pre-impact state, respectively. Then the identities in Eqs. \eqref{eq:gen-indet-1} and \eqref{eq:gen-indet-2} hold.
     \end{lemma}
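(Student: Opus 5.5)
The plan is to prove the $\vartheta$-relation through a reflection symmetry rather than by chasing triangle angles as in Lemma \ref{prop:gen-rot-map}. In this regime $v=0$, so the midpoint is at rest: $p(t)=\mathrm{c}(z_i^+)=p(z_i^+)$ for $t\in[t_i,t_{i+1}]$. Both endpoints then move on the circle of radius $r(z_i^+)=\ell$ centered at $p$. I will set $d=\|p\|$ and take the functions $\alpha,\beta$ of Definition \ref{def:alp-bet-gam-func} with $r=\ell$.

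By rotational symmetry, assume $p$ lies on the positive $x$-axis, with $p=(d,0)$. Then $d\le R-\ell$. Take first a regular state, $z_i^+\in\I_\omega^+\cap\mathcal{RI}^+$, of front type; the back-type case is the same with front and back interchanged. Since $d>R-\ell$ in this case, the circle of radius $\ell$ about $p$ meets $\mathbb{S}^1(R)$ in exactly two points $R(\cos(\pm\alpha_i),\sin(\pm\alpha_i))$. These points are mirror images under the reflection $\rho$ across the $x$-axis, i.e. the line $Op$.

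The front point sits at one of these points at time $t_i$, and its post-impact velocity points into the disk. Since the rotation is counterclockwise for $\omega_i>0$ and clockwise for $\omega_i<0$, this fixes $\phi_i=-\sgn(\omega_i)\alpha_i$, exactly as in the proof of Lemma \ref{prop:gen-rot-map}. By Proposition \ref{lem:fro-back-rel}, the next impact is made by the back point before time $t_i+\pi/|\omega_i|$. The back point traverses the same circle, and the only points of that circle on $\mathbb{S}^1(R)$ are the two intersection points. At time $t_i+\pi/|\omega_i|$ the back point reaches $s_i$, so it must hit the other point first, giving $s_{i+1}=\rho(s_i)$. Hence $\phi_{i+1}=\sgn(\omega_i)\alpha_i=\phi_i-2\sgn(\omega_i)\alpha_i$, which is Eq. \eqref{eq:gen-indet-1}.

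For Eq. \eqref{eq:gen-indet-2}, I use that $\rho$ acts on oriented angles by $\psi\mapsto-\psi$. The direction from $p$ to $s_i$ is $\theta_i$, because the front point is at $s_i$. The direction from $p$ to $s_{i+1}$ is $\theta_{i+1}+\pi$, because the back point is at $s_{i+1}$. Since $\rho$ fixes $p$ and maps $s_i$ to $s_{i+1}$, it maps one direction to the other: $\theta_{i+1}+\pi\equiv-\theta_i$ together with $\phi_{i+1}=-\phi_i$. Therefore
$$\vartheta_{i+1}=\phi_{i+1}-\theta_{i+1}\equiv-\phi_i+\theta_i+\pi=-\vartheta_i+\pi \pmod{2\pi},$$
which is Eq. \eqref{eq:gen-indet-2} with a suitable $k$. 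This also avoids the case distinctions on the representative of $\theta_i$ used in the generic proof.

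The tangential case is the step I expect to need the most care, since the characterization stated before Proposition \ref{lem:fro-back-rel} has to be reconciled with Eqs. \eqref{eq:def-fro-po}--\eqref{eq:def-tan}. With $v=0$, tangency means $\ell\omega\sin\vartheta=0$, so $\sin\vartheta=0$. Together with $|\cos\vartheta|>\ell/R$, this forces $\vartheta\in\{0,\pi\}$: the knife is radial and $d=R-\ell$. Then the two circles are internally tangent at a single point, so $\alpha_i=0$ and $\phi_{i+1}=\phi_i$. The next tangential impact is the back point reaching that same point at time $t_i+\pi/|\omega_i|$, which gives $\theta_{i+1}=\theta_i+\pi$. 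Hence $\vartheta_{i+1}=\vartheta_i-\pi$, and this equals $-\vartheta_i+\pi$ modulo $2\pi$ precisely because $\vartheta_i\in\{0,\pi\}$. The reflection argument above also covers this case, since $s_{i+1}=s_i=\rho(s_i)$.
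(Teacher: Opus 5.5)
Your overall strategy works and takes a different route from the paper. The paper's proof only notes that $\mathrm{c}(z_i^+)=p_i$, rotates it onto the positive $x$-axis, and defers to the figure-based argument of Lemma \ref{prop:gen-rot-map}. There, $\phi_i,\phi_{i+1}$ are read off the picture, and \eqref{eq:gen-indet-2} comes from angle sums in the triangles $O s_i A$ and $O s_{i+1}A$, with case splits on the representative of $\theta_i$. You instead use the reflection $\rho$ in the line $Op$, which fixes the stationary midpoint and both circles. Then $s_{i+1}=\rho(s_i)$ gives $\phi_{i+1}\equiv-\phi_i$ and $\theta_{i+1}+\pi\equiv-\theta_i$ at once, so $[\vartheta_{i+1}]=[-\vartheta_i+\pi]$ with no case analysis. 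This is shorter and more transparent; the paper's route only buys uniformity with the generic regime. Your tangential analysis is also correct, and it repairs the text. With $v=0$, Eq. \eqref{eq:def-tan} forces $\sin\vartheta=0$, and Eqs. \eqref{eq:def-fro-po}--\eqref{eq:def-bac-po} then give $\vartheta=0$ (front) or $\vartheta=\pi$ (back). The paper instead asserts $\vartheta=\pm\frac{\pi}{2}$ in Remark \ref{rmk:tan-imp} and before Proposition \ref{lem:fro-back-rel}. Your value is the one compatible with the radial condition $\|\mathrm{c}\|=R-\ell$.

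Two slips need fixing. First, ``$d\le R-\ell$'' should read $d\ge R-\ell$, since $\|p_f\|=R$ and $\|p_f-p\|=\ell$; equality holds exactly in the radial (tangential) case. Second, and more seriously, the sign in the $\phi$-step is reversed. With $p=(d,0)$ you have $\frac{d}{dt}\|p_f\|^2=-2d\ell\,\omega\sin\theta$, so an inward (post-impact) front state has $\sgn(\sin\theta_i)=\sgn(\omega_i)$. Since $p$ lies on the $x$-axis, $\ell\sin\theta_i=R\sin\phi_i$, so $\phi_i=\sgn(\omega_i)\alpha_i$ and $\phi_{i+1}=-\sgn(\omega_i)\alpha_i$. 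This is also what the proof of Lemma \ref{prop:gen-rot-map} states ($\phi_i=-\alpha_i$ when $\omega_i<0$). Your values $\phi_i=-\sgn(\omega_i)\alpha_i$ and $\phi_{i+1}=\sgn(\omega_i)\alpha_i$ would give $\phi_{i+1}=\phi_i+2\sgn(\omega_i)\alpha_i$, the opposite of $\Omega$. The equality $\sgn(\omega_i)\alpha_i=\phi_i-2\sgn(\omega_i)\alpha_i$ that you write is false unless $\alpha_i=0$, which does not occur for regular states. Swapping the two signs fixes \eqref{eq:gen-indet-1}. Your proof of \eqref{eq:gen-indet-2} is unaffected, since it uses only $\phi_{i+1}\equiv-\phi_i$.
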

     \begin{proof}
         Since $v=0$, Eq. \eqref{eq:def-cent} implies that $\mathrm{c}(z^+_i) = p_i$. By rotational symmetry, we may therefore assume, without loss of generality, that $\mathrm{c}(z^+_i) = p_i$ lies on the positive $x$-axis. The result then follows as in the proof of Lemma \ref{prop:gen-rot-map}; see Fig. \ref{fig:circle-4}. 
     \end{proof}
     
     \begin{figure}
    \centering
    \includegraphics[width=0.33\linewidth]{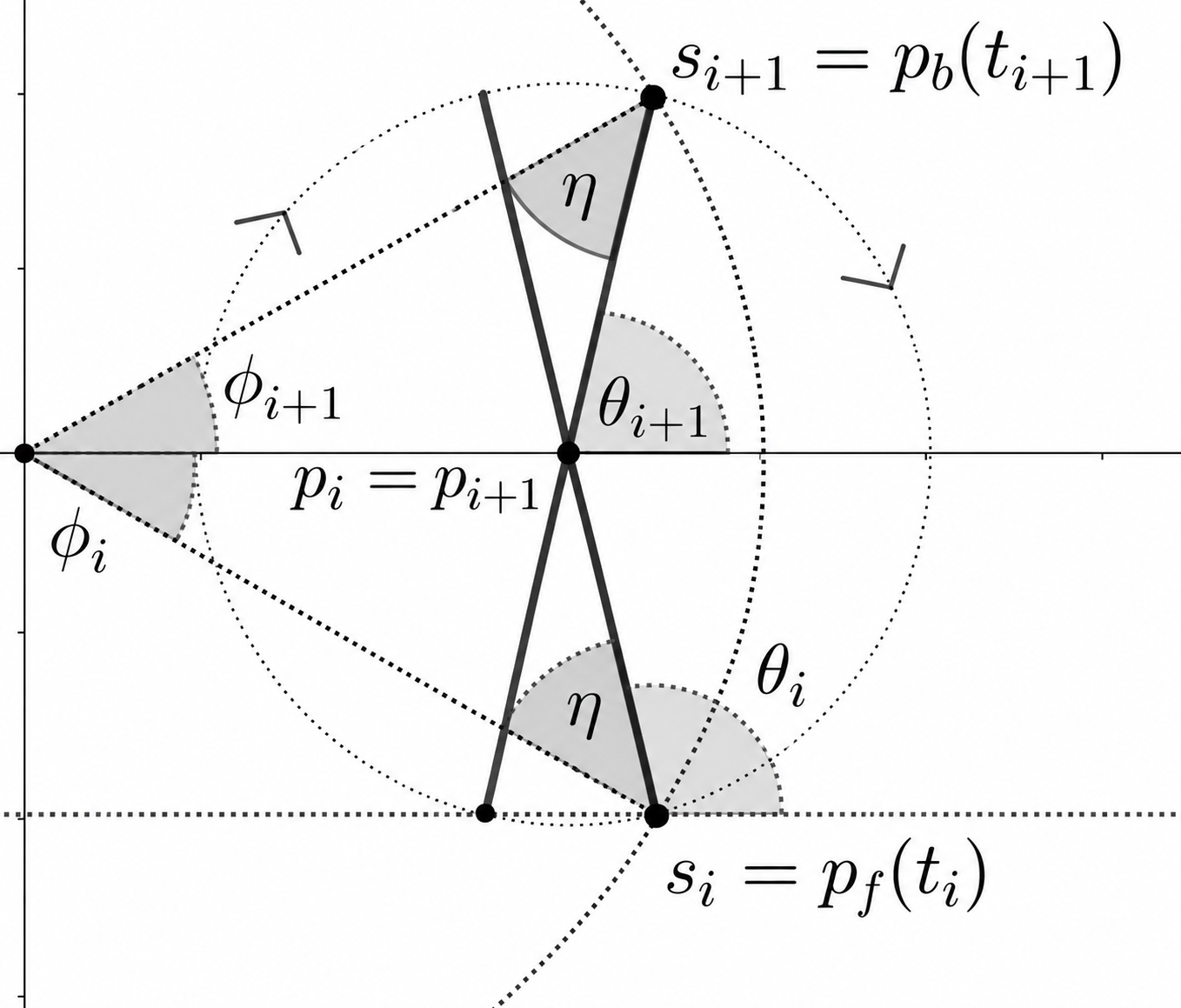}
    \quad
    \includegraphics[width=0.33\linewidth]{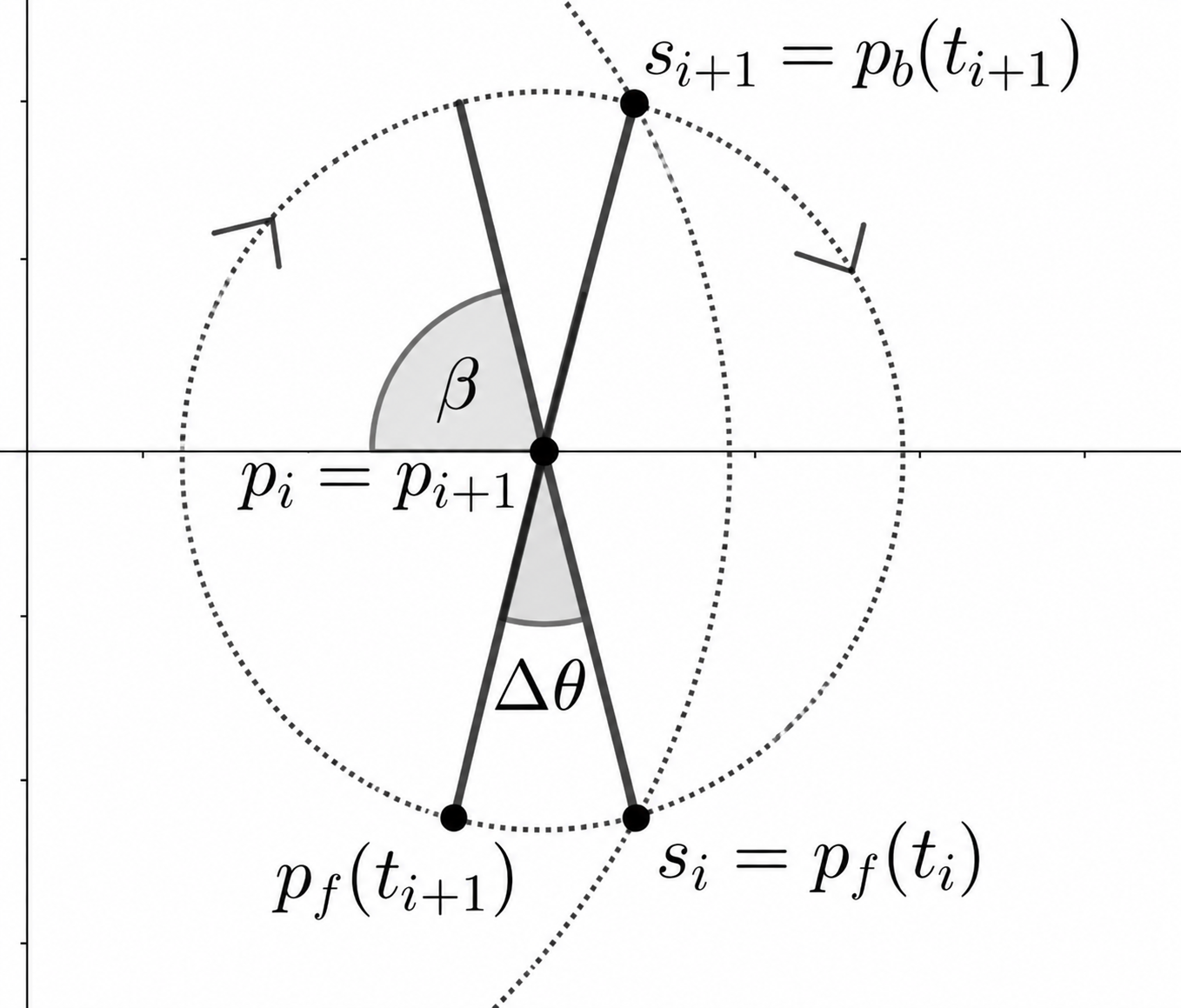}
    \caption{ Both panels illustrate the impact points $s_i$ and $s_{i+1}$. The left figure shows the essential angles used to prove Lemma \ref{prop:sin-v-zero-rot-map} for the singular case $\omega \neq 0$. The right figure shows that $ \Delta \theta_i = 2\beta_i-\pi$.  }
    \label{fig:circle-4}
    \end{figure}

    We now define the time-of-flight function $\Delta t_\omega: \mathcal{I}^+_\omega\to (0,\infty)$ by
     $$  \Delta t_\omega(z^+) = \frac{2\beta(z^+)-\pi}{|\omega|} . $$

     \begin{lemma}\label{lem:del-t-omega}
     If $z_i^+\in \I_\omega^+$, then  $\Delta t_\omega(z_i^+)$ is the elapsed time between the $i$-th and the $(i+1)$-st impacts.
     \end{lemma}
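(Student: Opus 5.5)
The plan mirrors the proof of Lemma \ref{lem:del-t-generic}. Along the smooth motion with initial state $z_i^+\in\I_\omega^+$ we have $\dot\theta=\omega_i$, so $|\Delta\theta_i|=|\omega_i|\,\Delta t_i$. It therefore suffices to show that the knife-edge turns through the unoriented angle $|\Delta\theta_i| = 2\beta_i-\pi$, where $\beta_i:=\beta(z_i^+)$, before the next impact.

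First I set up the geometry. Since $v=0$, the midpoint is fixed at $p_i=\mathrm{c}(z_i^+)$. We have $r(z_i^+)=\ell$, and $d(z_i^+)=\|p_i\|$ satisfies $R-\ell\le d<R$. By rotational symmetry I place $p_i$ on the positive $x$-axis, as in the proof of Lemma \ref{prop:sin-v-zero-rot-map}. The front and back points both travel on the circle of radius $\ell$ about $p_i$, and this circle meets $\mathbb{S}^1(R)$ at the two points $s_i, s_{i+1}$ symmetric about the $x$-axis. By Definition \ref{def:alp-bet-gam-func}, $\beta_i$ is the unoriented angle at $p_i$ between the ray from $p_i$ away from the origin (the positive $x$-direction) and the segment $p_i s_{i\pm1}$. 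Consequently the arc of this small circle lying outside the open disk subtends the angle $2(\pi-\beta_i)$ at $p_i$, and the arc lying inside subtends $2\beta_i$.

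Next I track the motion. At time $t_i$ one endpoint, say the front point (the back case follows by swapping roles), sits at $s_i$. By Proposition \ref{lem:fro-back-rel}, the next impact is the back point reaching $s_{i+1}$. The back point is diametrically opposite on the small circle, so at time $t_i$ it sits at the antipode of $s_i$ with respect to $p_i$. Because $z_i^+$ is post-impact, the front point moves inward, that is, into the inner arc. Hence the back point moves away from the outer arc and must traverse the inner arc until it reaches $s_{i+1}$, the endpoint of the outer arc it first meets.

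The key computation is the angle from the antipode of $s_i$ to $s_{i+1}$ measured through the inner arc. Measuring angles at $p_i$ from the positive $x$-direction, $s_i$ sits at $-\beta_i$ and $s_{i+1}$ at $+\beta_i$, up to the choice of orientation by $\sgn(\omega_i)$. The antipode of $s_i$ sits at $\pi-\beta_i$. The back point moves in the direction that increases the distance from the outer arc, reaching $s_{i+1}$ after rotating through $\beta_i-(\pi-\beta_i)=2\beta_i-\pi$. This matches the right panel of Fig. \ref{fig:circle-4}.

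I expect the main obstacle to be checking the direction of rotation and positivity. The direction must be correct: for $\beta_i\in(\pi/2,\pi)$ the correct rotation is the short one of size $2\beta_i-\pi$, not $2\pi-(2\beta_i-\pi)$. The regularity condition \eqref{eq:def-front-type-2} should force $\beta_i>\pi/2$, since the back point must start strictly inside the disk, which makes $\Delta t_\omega>0$. The tangential case needs separate treatment. There $d=R-\ell$ and $\beta_i=\pi$, so the formula gives $\pi/|\omega_i|$, which agrees with the time $\pi/|\omega|$ found in the proof of Proposition \ref{lem:fro-back-rel}.
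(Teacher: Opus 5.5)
Your strategy is the paper's. The midpoint stays at $p_i=\mathrm{c}(z_i^+)$, an endpoint sweeps an angle $2\beta_i-\pi$ on the circle of radius $\ell$, and $|\Delta\theta_i|=|\omega_i|\,\Delta t_i$; the paper simply reads $2\beta_i-\pi$ off Fig.~\ref{fig:circle-4}. Your final value is right, but the geometric step you give to justify it is wrong as written, because you have misidentified $\beta_i$. With $r=\ell$, Eq.~\eqref{eq:def-al-bet} gives $\cos\beta_i=(d^2+\ell^2-R^2)/(2d\ell)$. This is the law of cosines in the triangle with vertices $O$, $p_i$, $s$, so $\beta_i=\angle O p_i s$ is measured from the ray pointing \emph{toward} the origin, not away from it. Your own tangential check exposes this. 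There $\beta_i=\pi$, and under your description $s$ would lie on the ray from $p_i$ through $O$, strictly inside the disk, whereas the tangency point lies on the outward ray.

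Measure angles $\psi$ at $p_i$ from the positive $x$-direction. The two intersection points are then at $\psi=\mp(\pi-\beta_i)$, not $\mp\beta_i$, and the outer arc is $|\psi|<\pi-\beta_i$. So your claim that the outer arc subtends $2(\pi-\beta_i)$ is true, but it contradicts your own placement. Under that placement the outer arc would be $|\psi|<\beta_i$, and it would contain your starting back point $\pi-\beta_i$ whenever $\beta_i>\pi/2$; the back point would start outside the disk. The correct picture, with $s_i$ at $\psi=\beta_i-\pi$, is as follows. The back point starts at $\psi=\beta_i$, which lies on the inner arc exactly when $\beta_i>\pi/2$; this is the right form of your regularity remark. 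The front point enters the inner arc by decreasing $\psi$. Hence the back point moves \emph{toward} the outer arc, not away from it: it goes from $\psi=\beta_i$ down to $\psi=\pi-\beta_i$, which is $s_{i+1}$, sweeping $2\beta_i-\pi$. Meanwhile the front point goes from $\beta_i-\pi$ to $-\beta_i$ and stays inside. Your difference $\beta_i-(\pi-\beta_i)$ agrees only because the true starting and target positions are your two positions interchanged. With this correction the rest of your argument goes through, including the tangential case $\beta_i=\pi$, which gives $\pi/|\omega_i|$.
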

     \begin{proof}
          Consider the initial state $z_i^+\in \I_\omega^+$. Then the midpoint position $p_i=p_{i+1}$ in Fig. \ref{fig:circle-4} corresponds to both states  $z_i^+$ and $z_{i+1}^-$. From this construction, the front point rotates through an unoriented angle $2\beta_i-\pi$, as shown in Fig. \ref{fig:circle-4}. Hence, $|\Delta\theta_i| = 2\beta_i-\pi$.
     \end{proof}

\subsection{Singular Case \texorpdfstring{\(\omega =0, v \neq  0\)}{}}\label{subsec:smo-sing-dy}
In this case, the kinetic energy is purely translational, and the definition of the leading and trailing points given in Eq. \eqref{eq:lead-point} remains valid.  In contrast to the generic case, the pre-impact states are always trailing-type, and the post-impact states are always leading-type.

Let $z^+\in \mathcal{I}^+$ be a post-impact state with $0= \omega $ and $0\neq v$. Then the solution to the nonholonomic equations of motion with initial state $z^+$ is given by
\begin{equation*}\label{eq:mot-ome-eq-zero}
\theta(t)  = \theta(z^+), \quad p(t) = p(z^+)+ vt (\cos\theta, \sin\theta). 
\end{equation*}
Thus, the midpoint of the knife-edge and the leading point move along a straight line. 

The space of post-impact states for the singular smooth dynamics $v \neq  0$ was defined in Eq. \eqref{eq:def-sin-im-v-neq-zer}. We now define the corresponding space of pre-impact states:
\begin{equation*}
        \mathcal{RI}_v^-  \coloneqq   \big\{ z \in \mathcal{RI}^- : \omega = 0\big\} .
\end{equation*}


Before continuing, we record the following observation about the spaces $\mathcal{RI}_v^-$ and $\mathcal{RI}_v^+$.
\begin{remark}\label{rem:angl-rep-ome-zer} Eqs. \eqref{eq:def-fro-po} and \eqref{eq:def-bac-po} imply that, for each impact state, we can take a representative of the equivalence class $[\vartheta]$ as follows.
\begin{itemize}
    \item If $z^-\in \mathcal{RI}_v^-$ with $0<v$, then $\vartheta\in (-\frac{\pi}{2},\frac{\pi}{2})$, whereas if $z^-\in \mathcal{RI}_v^-$ with $v<0$, then $\vartheta \in (\frac{\pi}{2},\frac{3\pi}{2})$.

    \item If $z^+\in \mathcal{RI}_v^+$ with $0<v$, then $\vartheta\in (\frac{\pi}{2},\frac{3\pi}{2})$, whereas if $z^+\in \mathcal{RI}_v^+$  with $v<0$, then $\vartheta \in (-\frac{\pi}{2},\frac{\pi}{2})$.
\end{itemize}
\end{remark}

\begin{lemma}\label{prop:sin-vneq-zer-rot-map}
     Let  $z_i^+\in \mathcal{RI}_v^+$ and $z_{i+1}^-\in \mathcal{RI}_v^-$ denote the $i$-th post-impact state and the $(i+1)$-st pre-impact state, respectively. Then the identities in Eqs.  \eqref{eq:gen-indet-1} and \eqref{eq:gen-indet-2} hold.
\end{lemma}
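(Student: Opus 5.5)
Since $\omega_i=0$ and $v_i\neq 0$, the explicit solution of Subsection~\ref{subsec:smo-sing-dy} gives $\theta(t)\equiv\theta_i$ and $p(t)=p_i+v_i(t-t_i)(\cos\theta_i,\sin\theta_i)$. The whole knife-edge therefore slides along the fixed line $L_i$ through $p_i$ with direction $(\cos\theta_i,\sin\theta_i)$, and its orientation never changes. The plan is to reduce both identities to elementary facts about the chord $L_i\cap\mathbb{D}$. The first step is to identify which endpoint hits next.

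\textbf{Step 1: the next impact is at the other end of the chord.} The endpoint lying on $\mathbb{S}^1(R)$ at time $t_i$ is $s_i=R(\cos\phi_i,\sin\phi_i)$. Since $z_i^+\in\mathcal{RI}_v^+$, Eq.~\eqref{eq:h-dif-fro} (resp. \eqref{eq:h-dif-back}) with $\omega=0$, together with Remark~\ref{rem:angl-rep-ome-zer}, shows that this endpoint moves strictly inward along $L_i$. Hence the knife-edge moves away from $s_i$ along the chord.

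The endpoint that precedes it in the direction of motion is at distance $2\ell$ from $s_i$ along $L_i$, and it lies strictly inside the disk by the definition of $\I$. Because the segment sweeps monotonically along the chord, this preceding endpoint is the first to reach $\mathbb{S}^1(R)$. It does so at the second intersection point $s_{i+1}=R(\cos\phi_{i+1},\sin\phi_{i+1})$ of $L_i$ with the circle.

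This also gives the analogue of Corollary~\ref{def:cor-rot-map} directly: the impacts alternate front/back, and $z_{i+1}^-\in\mathcal{RI}_v^-$ because $\omega$ is unchanged. The impact is regular rather than tangential by Remark~\ref{rmk:tan-imp}. I will double-check the front/back bookkeeping against Remark~\ref{rem:angl-rep-ome-zer}, since the signs of $\cos\vartheta$ encode it.

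\textbf{Step 2: the $\phi$-identity.} The chord joining $Re^{i a}$ and $Re^{i b}$ is perpendicular to the bisecting direction at angle $\tfrac{a+b}{2}$. Therefore its direction angle $\mu$ satisfies $a+b\equiv 2\mu+\pi \pmod{2\pi}$. The direction of $L_i$ is $\theta_i$ or $\theta_i+\pi$, and in both cases $2\mu\equiv 2\theta_i$. With $a=\phi_i$ and $b=\phi_{i+1}$ this gives
$$[\phi_{i+1}]=[2\theta_i-\phi_i+\pi]=[\phi_i-2\vartheta_i+\pi]=[\phi_i]+\Omega(z_i^+),$$
using $\vartheta_i=\phi_i-\theta_i$ and the second line of Eq.~\eqref{eq:def-rot-map}. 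This is Eq.~\eqref{eq:gen-indet-1}. Equivalently, this is the isosceles-triangle argument on $O,s_i,s_{i+1}$, whose base angles are equal.

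\textbf{Step 3: the $\vartheta$-identity.} Since $\theta_{i+1}=\theta_i$ as real numbers, we get
$$\vartheta_{i+1}=\phi_{i+1}-\theta_i=\theta_i-\phi_i+\pi+2\pi k=-\vartheta_i+\pi+2\pi k,$$
which is of the form \eqref{eq:gen-indet-2}.

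The main obstacle is not this computation but making it honest at the level of representatives. I would fix the representatives of $[\vartheta_i]$ and $[\vartheta_{i+1}]$ given by Remark~\ref{rem:angl-rep-ome-zer}, for example $\vartheta_i\in(\tfrac{\pi}{2},\tfrac{3\pi}{2})$ and $\vartheta_{i+1}\in(-\tfrac{\pi}{2},\tfrac{\pi}{2})$ when $v>0$. Then I would verify that the integer $k$ is consistent with these ranges, i.e. that $-\vartheta_i+\pi$ indeed lands in the pre-impact range up to $2\pi\mathbb{Z}$.

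As a by-product, the elapsed time is the chord length minus $2\ell$, divided by $|v_i|$, namely
$$\frac{2R|\cos\vartheta_i|-2\ell}{|v_i|}.$$
I would record this as the definition of $\Delta t$ on $\mathcal{RI}_v^+$.
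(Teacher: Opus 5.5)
Your argument is correct, and it rests on the same geometry as the paper's: the isosceles triangle with vertices $O$, $s_i$, $s_{i+1}$, plus $\theta_{i+1}=\theta_i$. Your Step~3 is essentially the paper's last line. Where you differ is in how you extract Eq.~\eqref{eq:gen-indet-1}.

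\begin{itemize}
\item \textbf{The paper's route.} It invokes the classical billiard fact that the central angle equals $2\sigma$, where $\sigma$ is the angle between $\dot p$ and the tangent at $s_i$. It computes $\cos\sigma=\cos(-\vartheta_i-\frac{\pi}{2})$ and uses Remark~\ref{rem:angl-rep-ome-zer} to choose a representative of $\vartheta_i$ for which the principal branch of $\arccos$ applies. It then treats $v>0$ and $v<0$ separately, obtaining $\phi_i-2\vartheta_i-\pi$ and $\phi_i-2\vartheta_i+\pi$, which agree modulo $2\pi$.
\item \textbf{Your route.} The chord identity $\phi_i+\phi_{i+1}\equiv 2\theta_i+\pi \pmod{2\pi}$ lives in $\mathbb{S}^1$ from the start. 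It sees the direction of $L_i$ only modulo $\pi$, so it needs no branch choice, no case split on $\sgn(v)$, and no front/back bookkeeping.
\item \textbf{Step~1.} Your argument that the other endpoint hits first, at the far end of the chord, and that the impact is regular, is left implicit in the paper for this case. Your version is therefore, if anything, more complete.
\end{itemize}

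Two minor points.

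First, the ``main obstacle'' you anticipate is not one. Eq.~\eqref{eq:gen-indet-2} only asserts a congruence modulo $2\pi$, and Step~3 already gives it. With the ranges of Remark~\ref{rem:angl-rep-ome-zer} one in fact has exactly $\vartheta_{i+1}=-\vartheta_i+\pi$.

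Second, justify the inward-motion claim in Step~1 directly rather than through Eq.~\eqref{eq:h-dif-back}. When $\omega=0$, both endpoints have velocity $v(\cos\theta,\sin\theta)$. So at an impact with contact point $R(\cos\phi,\sin\phi)$, the squared norm of the contacting endpoint has time derivative $2Rv\cos\vartheta$, for front-type and back-type alike. Eq.~\eqref{eq:h-dif-back} as printed has the opposite sign. The corrected sign is the one consistent with Remark~\ref{rem:angl-rep-ome-zer} and Corollary~\ref{def:cor-rot-map}, so your planned front/back double-check would otherwise turn up a spurious inconsistency. Your Step~2 is unaffected either way.
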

Before presenting the proof, we remark that, in this case, the system resembles the dynamics of a classical billiard in a disk \cite[Chapter 2]{tabachnikov2005geometry}.
\begin{proof}[Proof of Lemma \ref{prop:sin-vneq-zer-rot-map}] 
    Let us begin with the identity \eqref{eq:gen-indet-1}.
    It is well known that, for the classical billiard in a disk, the angle defining the rotation map is twice the angle between the post-impact velocity and the tangent vector to the circle at the impact point. In this case, we consider the angle $\sigma$ between $\dot{p}(t)$ and the tangent vector to the circle; see Fig. \ref{fig:v-zero-angles}. We compute $\sigma$ as follows: if $z_i^+ \in \mathcal{RI}_v^+$ with $0<v$ and $\langle\cdot,\cdot\rangle:\R^2\times\R^2\to \R$ is the Euclidean inner product, then $\sigma$ is determined by the equation
    $$ \cos\sigma = \langle (\cos\theta_i,\sin\theta_i), (-\sin\phi_i,\cos\phi_i) \rangle  =\cos\big(-\vartheta_i-\frac{\pi}{2}\big).  $$
     It follows from Remark \ref{rem:angl-rep-ome-zer} that there exists a representative $\vartheta_i$ such that $-\vartheta_i-\frac{\pi}{2} \in (0,\pi)$. Thus, we can use the principal branch of the $\arccos$ function to find that the desired angle is $\sigma = -\vartheta_i-\frac{\pi}{2}$. Therefore, the angle defining the impact point $s_{i+1}$ is given by
     $$\phi_{i+1} = \phi_i - 2\vartheta_i -\pi.$$ 
     An analogous argument applies when $v<0$. In this case, the angle defining the impact point $s_{i+1}$ is given by
     $$\phi_{i+1} = \phi_i - 2\vartheta_i +\pi. $$ 
     It follows from the two equations above that $[\phi_{i+1}] = [\phi_i] +[-2\vartheta_i+\pi]$. Hence,  if $z_i^+ \in \mathcal{RI}_v^+$, then $[\phi_{i+1}] = [\phi_i] +\Omega(z^+_i)$.

     We continue by proving the identity \eqref{eq:gen-indet-2}. Since $\omega_i = 0$ and $\omega$ is constant along the smooth dynamics, we have $\omega = 0$ and hence $\dot{\theta} = 0$ between the $i$-th and $(i+1)$-st impacts. Thus, $\theta_i=\theta_{i+1}$, and 
     $$[\vartheta_{i+1}] = [\phi_{i+1}-\theta_{i+1}]  = [\phi_i-\theta_i] +[-2\vartheta_i+\pi] = [-\vartheta_i+\pi].$$  
\end{proof}

\begin{figure}
    \centering
    \includegraphics[width=0.35\linewidth]{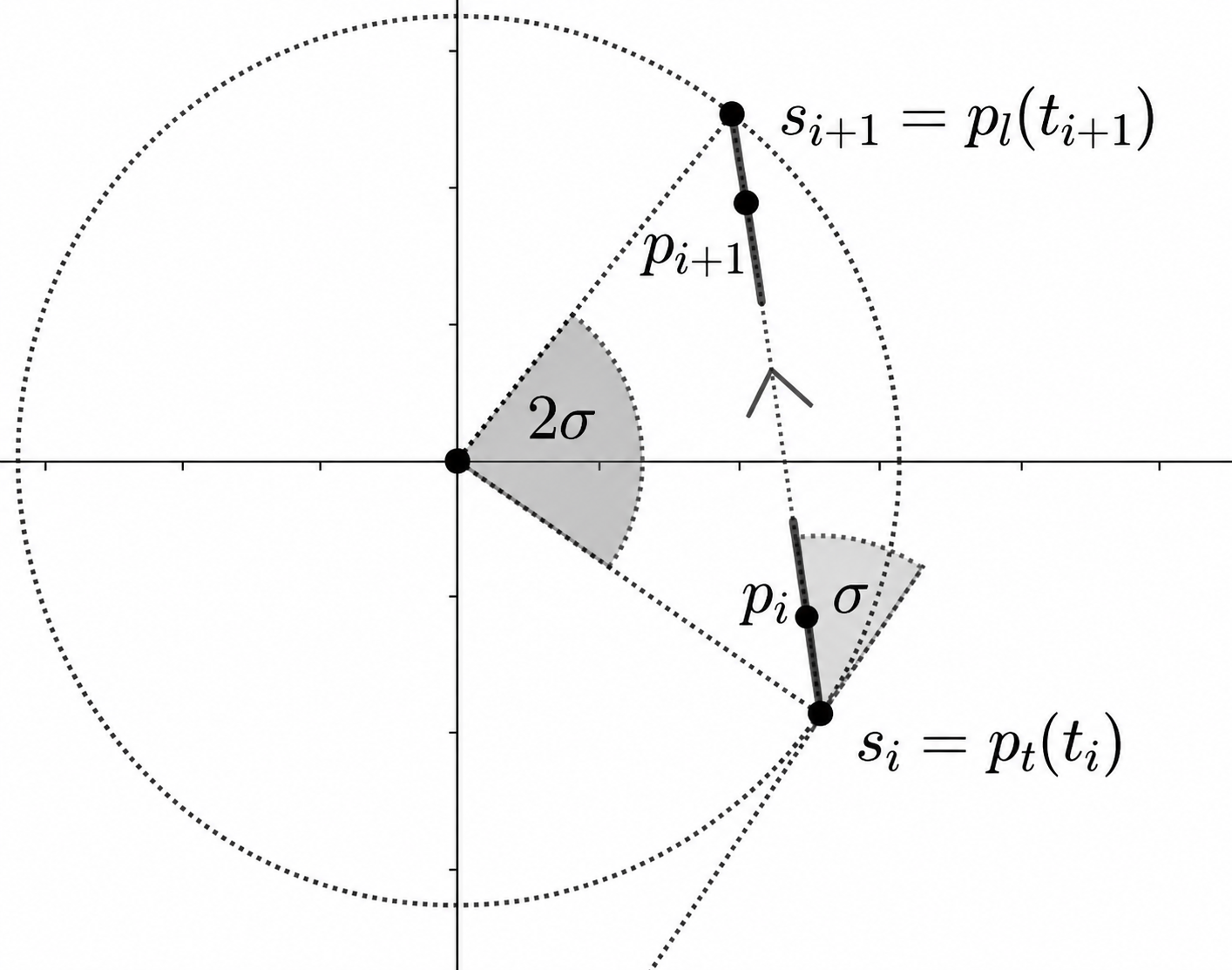}
    \caption{The figure illustrates the analogy between the classical billiard in a disk and the case  $v\neq 0$, together with the angle $\sigma$.  }
    \label{fig:v-zero-angles}
    \end{figure}

    We define the time-of-flight function $\Delta t_v: \mathcal{RI}^+_v\to (0,\infty)$ by
    $$  \Delta t_v (z^+) = \frac{2(|R\cos\vartheta| - \ell)}{|v|} . $$

     \begin{lemma}\label{lem:del-t-v}
      If $z_i^+\in \mathcal{RI}_v^+$, then  $\Delta t_v(z_i^+)$ is the elapsed time between the $i$-th and $(i+1)$-st impacts.
     \end{lemma}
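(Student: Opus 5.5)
The plan is to compute the flight time directly from the explicit straight-line motion of Subsection \ref{subsec:smo-sing-dy}, with no circle-intersection geometry. Write $e \coloneqq \sgn(v_i)(\cos\theta_i,\sin\theta_i)$ for the unit direction of motion of the midpoint; since $\omega=0$, the orientation $\theta_i$ is constant along the flight. Every point of the blade then translates with velocity $|v_i|\,e$. The leading point is $p+\ell e$ and the trailing point is $p-\ell e$, as in Eq. \eqref{eq:lead-point}.

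\textbf{Step 1: which endpoint is on the boundary at $t_i$.} First I would use Remark \ref{rem:angl-rep-ome-zer}. For $z_i^+\in\mathcal{RI}_v^+$ with $v_i>0$ we have $\cos\vartheta_i<0$, so the state is back-type, which is the trailing point. For $v_i<0$ we have $\cos\vartheta_i>0$, so the state is front-type, which is again the trailing point. Either way, at time $t_i$ the trailing point sits at $s_i=R(\cos\phi_i,\sin\phi_i)$. The leading point then sits at $s_i+2\ell e$, on the chord issuing from $s_i$ in direction $e$. A short computation should give
\[
\langle e, s_i/R\rangle=\sgn(v_i)\cos(\phi_i-\theta_i)=\sgn(v_i)\cos\vartheta_i=-|\cos\vartheta_i|,
\]
which confirms that $e$ points into the disk.

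\textbf{Step 2: chord length.} Solving $\|s_i+\lambda e\|^2=R^2$ gives the roots $\lambda=0$ and
\[
\lambda=-2\langle s_i,e\rangle=2R|\cos\vartheta_i|.
\]
So the chord through $s_i$ in direction $e$ has length $L=2R|\cos\vartheta_i|$. By Eqs. \eqref{eq:def-fro-po} and \eqref{eq:def-bac-po} we have $|\cos\vartheta_i|>\ell/R$, hence $L>2\ell$, and the leading point starts strictly inside the chord.

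\textbf{Step 3: identify the next impact and read off the time.} The whole blade stays on this chord for all $t$. Each point of the blade is inside the disk exactly when its chord parameter lies in $[0,L]$. The leading point's parameter, $2\ell+|v_i|(t-t_i)$, is always the largest one on the blade, so by convexity of the disk the first exit from $[0,L]$ is the leading point reaching $\lambda=L$. This happens after the midpoint has travelled a distance $L-2\ell$. This step also recovers the fact that the next pre-impact state is of trailing type in the paper's sense, consistent with Lemma \ref{prop:sin-vneq-zer-rot-map}.

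Since the midpoint moves with speed $|v_i|$, the elapsed time is
\[
t_{i+1}-t_i=\frac{L-2\ell}{|v_i|}=\frac{2(R|\cos\vartheta_i|-\ell)}{|v_i|}=\Delta t_v(z_i^+),
\]
which is positive by Step 2.

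The only delicate point is the sign bookkeeping in Step 1, namely matching $\sgn(v_i)$ with front-type or back-type through Remark \ref{rem:angl-rep-ome-zer}. I would check the case $v_i>0$ explicitly and treat $v_i<0$ by the symmetry in Eq. \eqref{eq:fron-back-sym}.
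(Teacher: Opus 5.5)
Your argument is correct and is essentially the paper's: the chord through the impact point has length $2R|\cos\vartheta_i|$, and subtracting the blade length $2\ell$ and dividing by the speed $|v_i|$ gives $\Delta t_v(z_i^+)$. The paper reads the chord length off the isosceles triangle as $2R|\sin\sigma|$, whereas you obtain it as the nonzero root of $\|s_i+\lambda e\|^2=R^2$; your Steps 1 and 3 make explicit what the paper leaves to Fig.~\ref{fig:v-zero-angles}. One small correction to an aside: your Step 3 shows that the leading point $p+\ell e$ is the one that reaches the boundary, so by the paper's definition the next pre-impact state is of leading type, not trailing type, and the remark can simply be dropped because the lemma does not need it.
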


     \begin{proof}
     As shown in Fig. \ref{fig:v-zero-angles}, the base of the isosceles triangle whose vertices are the origin and the two impact points has length
         $$ 2R|\sin\sigma| = 2R|\cos\vartheta|.  $$
         Subtracting the length of the knife-edge from the length of this base yields the distance traveled by the midpoint and hence the desired result. 
     \end{proof}

We conclude this subsection by proving that the function $\Omega:\I^+ \to \mathbb{S}^1$ is continuous. 
     \begin{lemma}
         The function $\Omega:\I^+ \to \mathbb{S}^1$ defined in Eq. \eqref{eq:def-rot-map} is continuous.  
     \end{lemma}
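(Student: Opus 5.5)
Continuity on the open pieces is immediate, so the plan reduces to checking that the two branches of Eq. \eqref{eq:def-rot-map} match where they meet. On $\I^+\setminus\mathcal{RI}_v^+$, the quantities $d$ and $r$ are continuous, and $\alpha$ is obtained from them through the continuous function $\arccos$ in Eq. \eqref{eq:def-al-bet}. The paper has already noted that $\alpha$ extends continuously to $\mathcal{TI}$ with value $0$. The factor $\sgn(\omega)$ is locally constant on $\{\omega\neq0\}$. Hence $[-2\sgn(\omega)\alpha]$ is continuous on $\I^+\setminus\mathcal{RI}_v^+$. On $\mathcal{RI}_v^+$ the expression $[-2\vartheta+\pi]$ is clearly continuous. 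The only issue is therefore continuity at points $z_*^+=([\phi_*],[\vartheta_*],0,v_*)\in\mathcal{RI}_v^+$ when approached by states with $\omega\to0$, $\omega\neq0$. By Remark \ref{rmk:tan-imp}, $\mathcal{RI}_v^+$ is away from $\mathcal{TI}$, so nearby states are regular and have $v\neq0$.

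I will compute the limit of $[-2\sgn(\omega)\alpha]$ as $\omega\to0^\pm$ and show that it equals $[-2\vartheta_*+\pi]$. Set $\rho=v/\omega$, so that $|\rho|\to\infty$. Take the front-type case with $v_*>0$; the other cases follow from the symmetry in Eq. \eqref{eq:fron-back-sym} and the representative choices in Remark \ref{rem:angl-rep-ome-zer}. Expand
$$\cos\alpha=\frac{d^2+R^2-r^2}{2dR},\qquad d^2-r^2=R^2+2R\ell\cos\vartheta+2R\rho\sin\vartheta,\qquad d=|\rho|+O(1).$$
This gives $\cos\alpha\to\sgn(\rho)\sin\vartheta_*=\sgn(\omega)\sin\vartheta_*$, since $v>0$.

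Taking $\vartheta_*\in(\pi/2,3\pi/2)$ as in Remark \ref{rem:angl-rep-ome-zer}, there are two cases. For $\omega>0$ we get $\alpha\to\arccos(\sin\vartheta_*)=\vartheta_*-\pi/2$, and therefore $-2\alpha\to-2\vartheta_*+\pi$. For $\omega<0$ we get $\alpha\to\arccos(-\sin\vartheta_*)=3\pi/2-\vartheta_*$, and therefore $+2\alpha\to3\pi-2\vartheta_*\equiv-2\vartheta_*+\pi$. Both one-sided limits agree with the second branch modulo $2\pi$.

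Geometrically, this is the statement that the circle of radius $|\rho|$ tends to a straight line and the chord it cuts becomes the classical chord of Lemma \ref{prop:sin-vneq-zer-rot-map}. I will use this picture as a consistency check on the branch choices.

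The main obstacle is the bookkeeping of $\arccos$ branches and representatives of $\vartheta$ across the four subcases: front or back type, and $\operatorname{sgn}v$. In the back-type case the $\cos\vartheta$ term changes sign, but it stays of lower order, so the leading term $\sgn(\omega)\sgn(v)\sin\vartheta$ controls the limit. I will verify each subcase against Remark \ref{rem:angl-rep-ome-zer}. Alternatively, I can reduce the back-type cases to the front-type ones via the symmetry in Eq. \eqref{eq:fron-back-sym}, which maps $\vartheta\mapsto\vartheta+\pi$ and $v\mapsto-v$ and leaves both branches of $\Omega$ invariant modulo $2\pi$.
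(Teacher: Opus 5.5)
Your proposal is correct and is essentially the paper's proof. Both arguments approach $\mathcal{RI}_v^+$ from $\{\omega\neq0\}$, show that the $\arccos$ argument tends to $\sgn(\omega)\sgn(v_*)\sin\vartheta_*$, take the representative of $\vartheta_*$ from Remark~\ref{rem:angl-rep-ome-zer}, and check that both signs of $\omega$ give $[-2\vartheta_*+\pi]$; your one-sided limits play the role of the paper's constant-sign subsequences. One small slip: a state in $\mathcal{RI}_v^+$ with $v_*>0$ and $\vartheta_*\in(\pi/2,3\pi/2)$ is back-type, not front-type, because a front-type post-impact state with $\omega=0$ requires $\cos\vartheta_*>\ell/R$ and $v_*\cos\vartheta_*<0$, i.e.\ $v_*<0$. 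So the case you actually computed is the back-type one, which does no harm since the $\pm 2R\ell\cos\vartheta$ term vanishes in the limit.
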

     \begin{proof}
         By the piecewise definition of $\Omega:\I^+ \to \mathbb{S}^1$, it suffices to consider a sequence $z^+_n\in \I^+ \setminus\mathcal{RI}^+_v$ and show that if $z^+_n \to z^+ \in \mathcal{RI}^+_v$ as $n\to \infty$, then 
         $$ [-2\sgn(\omega_n) \alpha(z^+_n)] \to [- 2\vartheta + \pi], \qquad \text{as}\qquad n\to \infty. $$

          For brevity, set $d_n = d(z^+_n)$ and $r_n = r(z^+_n)$. From Eqs. \eqref{eq:def-al-bet}, \eqref{eq:def-cent}, and \eqref{eq:def-rad-mot}, it follows that
         \begin{equation*}
             \frac{d_n^2+R^2-r_n^2}{2d_nR} = \frac{\sgn(\omega_n)\big( v_n\sin \vartheta_n + \omega_n(R+\ell\cos\vartheta_n)\big)}{\sqrt{v_n^2 + \omega^2_n(R^2+\ell^2+2\ell\cos\vartheta_n) + 2R\omega_nv_n\sin\vartheta_n}}.
         \end{equation*}
         After passing to a subsequence, we may assume that $\omega_n>0$. Under this assumption, we obtain
         \begin{equation*}
             \lim_{n\to \infty} \frac{d_n^2+R^2-r_n^2}{2d_nR}  = \sgn(v) \sin\vartheta.
         \end{equation*}
         By continuity of $\arccos$, we obtain
         \begin{equation*}
             \lim_{n\to \infty} -2\arccos\big( \frac{d_n^2+R^2-r_n^2}{2d_nR} \big) = \begin{cases}
                 -2\vartheta + \pi &\;\text{if} \; 0 <v,\\
                -2\vartheta - \pi &\;\text{if} \; v<0.\\
             \end{cases}
         \end{equation*}
         An analogous argument works for $\omega_n<0$.  In either case, interpreting the limiting expression modulo $2\pi$, we obtain $  \Omega(z^+_n) \to \Omega(z^+)$ along every subsequence on which $\omega_n$ has constant sign. Since every subsequence of $\{z_n^+\}$ admits a further subsequence with this property, it follows that $  \Omega(z^+_n) \to \Omega(z^+)$ as $n\to \infty$.
     \end{proof}

\subsection{Proof of Theorem \ref{the:rot-map}}\label{subsec:thm-1}

\begin{proof}[Proof of Theorem \ref{the:rot-map}] 
Lemmas \ref{prop:gen-rot-map}, \ref{prop:sin-v-zero-rot-map}, and \ref{prop:sin-vneq-zer-rot-map} imply that 
$$ z^-_{i+1} = \mathrm{Ro}(z_i^+). $$

Define $\Delta t : \I^+ \to (0,\infty)$  by
\begin{equation}\label{eq:fin-Del-t}
    \Delta t(z^+) = \begin{cases}
        \Delta t_g(z^+) &\;\text{if}\;z^+ \in \I^+_g,\\
        \Delta t_\omega(z^+) &\;\text{if}\;z^+ \in \I^+_\omega,\\
        \Delta t_v(z^+) &\;\text{if}\;z^+ \in \I^+_v.\\
    \end{cases}
\end{equation}
Thus, Lemmas \ref{lem:del-t-generic}, \ref{lem:del-t-omega}, and  \ref{lem:del-t-v} imply that if $z^+_{i} \in \I^+$, then $\Delta t(z^+_i)$ is the elapsed time between consecutive impacts.
\end{proof}


\section{Proof of Theorem \ref{mainthe:slei}}\label{sec:proof-main-the}

In this section, we prove Theorem \ref{mainthe:slei}. To this end, we first compute the impact map $\mathrm{I}:\I^-\to \I^+$ in the $z$-coordinates and hence obtain the map  $\T\coloneqq \mathrm{I}\circ\mathrm{Ro}.$
Since the expression for $\mathrm{I}(z^-)$ depends on whether $z^-\in \I^-_f$ or $z^-\in \I^-_b$, we first define the corresponding reflection maps, following the discussion in Subsection \ref{subsec:qua-vel}. Let $\mathrm{D}:\mathbb{S}^1 \to \R$ be the function  defined by
    $$ \mathrm{D}(\vartheta)\coloneqq  \frac{\ell^2}{J}\sin^2(\vartheta)+ \frac{1}{m}\cos^2(\vartheta).  $$
    For each $\vartheta$, define the reflections $\Rm_f^\vartheta:\R^2\to\R^2$ and $\Rm_b^\vartheta:\R^2\to\R^2$ by
    \begin{equation*}
\Rm_f^\vartheta (\omega,v ) \coloneqq   \begin{pmatrix}
                   \omega,v
                   \end{pmatrix}
    \begin{pmatrix}
        1-\frac{2\ell^2\sin^2\vartheta}{J\mathrm{D}(\vartheta)} & -\frac{2 \ell\cos\vartheta\sin\vartheta}{m\mathrm{D}(\vartheta)}  \\
       -\frac{2\ell\cos\vartheta\sin\vartheta}{J\mathrm{D}(\vartheta)}  &  1-\frac{2\cos^2\vartheta}{m \mathrm{D}(\vartheta)}\\
    \end{pmatrix}
\end{equation*}
and 
\begin{equation*}
\Rm_b^\vartheta (\omega,v ) \coloneqq   \begin{pmatrix}
                   \omega,v
                   \end{pmatrix}
    \begin{pmatrix}
        1-\frac{2\ell^2\sin^2\vartheta}{J\mathrm{D}(\vartheta)} & \frac{2\ell\cos\vartheta\sin\vartheta}{m\mathrm{D}(\vartheta)}  \\
       \frac{2 \ell\cos\vartheta\sin\vartheta}{J\mathrm{D}(\vartheta)}  &  1-\frac{2\cos^2\vartheta}{m \mathrm{D}(\vartheta)}\\
    \end{pmatrix}.
\end{equation*}

\begin{lemma}\label{lem:impct-map}
     The \textbf{impact map} $\mathrm{I}:\mathcal{I}^-\to \mathcal{I}^+$ is given by
\begin{equation}\label{eq:imp-map-sle}
    \mathrm{I}(z^-) = \begin{cases}
    
    \big(\phi,\vartheta,\Rm_f^\vartheta (\omega,v )\big),&\;\text{if}\; z^-\in \mathcal{I}_f^-, \\
    \big(\phi,\vartheta,\Rm_b^\vartheta (\omega,v )\big),&\;\text{if}\; z^-\in \mathcal{I}_b^- .
    \end{cases}
\end{equation}

\end{lemma}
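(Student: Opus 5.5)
The plan is to apply Proposition \ref{prp:pla-imp-} directly in the quasi-velocity coordinates of Subsection \ref{subsec:qua-vel}, treating front-type and back-type states separately. The impact does not change the configuration $q$, so $\phi$ and $\theta$, and therefore $\vartheta=\phi-\theta$, are preserved. Only $(\omega,v)$ changes, and the task is to identify the reflection $\Rm$ of Eq. \eqref{eq:imp-map-qua} with $\Rm_f^\vartheta$ or $\Rm_b^\vartheta$.

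\textbf{Step 1 (orthonormal frame).} The frame $\{X_1,X_2\}$ of Eq. \eqref{eq:lef-in-SE2} is orthogonal but not orthonormal: $g(X_1,X_1)=J$ and $g(X_2,X_2)=m$. I will therefore use $\tilde X_1=X_1/\sqrt{J}$ and $\tilde X_2=X_2/\sqrt{m}$. Then Eq. \eqref{eq:hor-gra} gives
\[
\horg h=\frac{X_1(h)}{J}X_1+\frac{X_2(h)}{m}X_2,\qquad g(\horg h,\horg h)=\frac{X_1(h)^2}{J}+\frac{X_2(h)^2}{m}.
\]
Since $\dot q^-=\omega X_1+vX_2$, Proposition \ref{prp:pla-imp-} yields, in the original quasi-velocities,
\[
\omega^+=\omega-\frac{2\,dh(\dot q^-)}{g(\horg h,\horg h)}\frac{X_1(h)}{J},\qquad v^+=v-\frac{2\,dh(\dot q^-)}{g(\horg h,\horg h)}\frac{X_2(h)}{m},
\]
where $dh(\dot q^-)=\omega X_1(h)+vX_2(h)$.

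\textbf{Step 2 (derivatives of $h$).} Near a front-type state, $h=\|p_f\|^2$ with $p_f=s=R(\cos\phi,\sin\phi)$. Differentiating Eq. \eqref{eq:def-front-back} gives
\[
X_1(h)=2\ell\langle p_f,(-\sin\theta,\cos\theta)\rangle=2R\ell\sin\vartheta,\qquad X_2(h)=2\langle p_f,(\cos\theta,\sin\theta)\rangle=2R\cos\vartheta.
\]
This agrees with Eq. \eqref{eq:h-dif-fro}. Hence $g(\horg h,\horg h)=4R^2\mathrm{D}(\vartheta)$. Because $\mathrm{D}(\vartheta)>0$ for every $\vartheta$, the horizontal gradient never vanishes, so $dh$ is independent of the constraint one-form and the hypotheses of Proposition \ref{prp:pla-imp-} hold.

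Substituting into Step 1 gives
\[
\omega^+=\omega\Big(1-\tfrac{2\ell^2\sin^2\vartheta}{J\mathrm{D}}\Big)-v\,\tfrac{2\ell\cos\vartheta\sin\vartheta}{J\mathrm{D}},\qquad
v^+=-\omega\,\tfrac{2\ell\cos\vartheta\sin\vartheta}{m\mathrm{D}}+v\Big(1-\tfrac{2\cos^2\vartheta}{m\mathrm{D}}\Big).
\]
Reading the row vector $(\omega,v)$ against the matrix, this is exactly $\Rm_f^\vartheta(\omega,v)$.

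For back-type states, $h=\|p_b\|^2$ with $p_b=s$, and the same computation gives $X_1(h)=-2R\ell\sin\vartheta$ and $X_2(h)=2R\cos\vartheta$, consistent with Eq. \eqref{eq:h-dif-back}. The squared norm is unchanged, and only the off-diagonal terms change sign, which produces $\Rm_b^\vartheta$.

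\textbf{Step 3 (image in $\I^+$).} Proposition \ref{prp:pla-imp-} reverses the sign of $dh(\dot q)$ and preserves the tangential impacts. By Eqs. \eqref{eq:def-reg-fro-im}, \eqref{eq:def-reg-bac-im} and \eqref{eq:def-tan}, it therefore maps $\mathcal{RI}^-_{f,b}$ to $\mathcal{RI}^+_{f,b}$ and $\mathcal{TI}$ to itself, with front/back type preserved. The map also preserves energy, so $(\omega^+,v^+)\neq(0,0)$. Hence $\mathrm{I}(\I^-)\subset\I^+$.

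The main obstacle is sign bookkeeping. Three points need care: the non-orthonormal frame, which forces the factors $1/J$ and $1/m$ into the right places; the sign of $X_1(h)$ in the back-type case; and matching the resulting formulas to the row-vector-times-matrix convention used to define $\Rm_f^\vartheta$ and $\Rm_b^\vartheta$.
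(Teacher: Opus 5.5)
Your proposal is correct and follows the same route as the paper's proof: pass to the orthonormal frame $\{X_1/\sqrt{J},X_2/\sqrt{m}\}$, compute $X_1(h)$ and $X_2(h)$ in the $z$-coordinates, and substitute into the reflection formula of Proposition~\ref{prp:pla-imp-}. You carry this out more explicitly than the paper does, including the check that the image lies in $\I^+$. Your constants are the accurate ones: the paper's identity $\mathrm{D}(\vartheta)=g(\horg h,\horg h)$ and Eqs.~\eqref{eq:h-dif-fro}, \eqref{eq:h-dif-back} agree with your computation only up to the nonzero factors $4R^2$, $2$ and $-2$, so your words ``agrees'' and ``consistent'' hold only up to scale. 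This is harmless, since the reflection is invariant under $h\mapsto c\,h$ with $c\neq 0$, and reversing the sign of $dh$ still exchanges $\mathcal{RI}^-_b$ and $\mathcal{RI}^+_b$.
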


\begin{proof}
    We begin with the front-type case. Consider the intrinsic definition of the function $h:SE(2)\to \R$ given in Eq. \eqref{eq:round table-1}, expressed in the coordinates $z$. If $z^-\in \I^-_f$, Eqs. \eqref{eq:h-dif-fro} and \eqref{eq:hor-gra} yield
    $$ \mathrm{D}(\vartheta) = g(\horg h,\horg h). $$
    Therefore, using the orthonormal frame $\{\frac{1}{\sqrt{J}}X_1,\frac{1}{\sqrt{m}}X_2\}$ and substituting Eq. \eqref{eq:h-dif-fro} and the expression above into Eq. \eqref{eq:imp-map-qua}, we obtain the desired formula for $\Rm_f^\vartheta$. The back-type case is analogous and yields the stated formula for $\Rm_b^\vartheta$, completing the proof.
\end{proof}


\subsection{Proof of Theorem \ref{mainthe:slei}}\label{subsec:prof-main-the}
We now prove our second main theorem. 

\begin{proof}[Proof of Theorem \ref{mainthe:slei}]
     Consider $z \in \mathcal{I}^+_f$. Since Corollary \ref{def:cor-rot-map} implies that  $\T(z) \in \mathcal{I}^+_b$, we obtain
     $$ \T^2(z) = \big(\phi+\varphi(z), \vartheta, (\mathcal{R}_b^{-\vartheta+\pi}\circ \mathcal{R}^\vartheta_f)(\omega,v )\big),$$
     where $\varphi:\I^+\to \mathbb{S}^1$ is the function defined  in Eq. \eqref{eq:def-varphi-fuc}.
     

     Moreover, the expressions for the reflections $\mathcal{R}^\vartheta_f,\mathcal{R}^\vartheta_b:\R^2\to\R^2$, given in Lemma \ref{lem:impct-map}, imply that $\mathcal{R}_b^{-\vartheta+\pi} \equiv \mathcal{R}^\vartheta_f$. Since a reflection is an involution, we have 
     $$\mathcal{R}_b^{-\vartheta+\pi}\circ \mathcal{R}^\vartheta_f \equiv id.$$
     This identity proves the desired result for the case $z \in \mathcal{I}^+_f$. An analogous argument applies to the case $z \in \mathcal{I}^+_b$. 
\end{proof}

\subsection{Proof of Theorem \ref{thm:2}}

We begin this section by defining the function $\mathrm{f}:\I^+ \to [0,R) $, which determines the family of caustics in Theorem \ref{thm:2}. 

\begin{equation}\label{eq:func-f-def}
    \mathrm{f}(z^+) \coloneqq  \begin{cases}
        |d(z^+) - |\frac{v}{\omega} | | \;&\;\text{if} \;\; z^+ \in \I_g^+,\\
        R|\sin\vartheta| \;&\;\text{if} \;\; z^+ \in \mathcal{RI}_v^+,\\
        |d(z^+) - \ell | \;&\;\text{if} \;\; z^+ \in \I_\omega^+.
    \end{cases}
\end{equation}
Here, $d:\mathcal{I}^+\setminus\mathcal{RI}_v^+ \to (0,\infty)$ is again the function defined by Eq. \eqref{eq:d-for-fron-type}.

We are now ready to prove Theorem \ref{thm:2}.

\begin{proof}[Proof of Theorem \ref{thm:2}] 
We consider the same cases as in the definition of  $\mathrm{f}(z^+)$.

\textbf{(Case $z_0^+ \in \I^+_g)~$} Let $z_0^+ \in \I^+_g$ and consider the smooth dynamics with initial state $z_{2i}^+ = \T^{2i}(z^+_0)$. By Theorem \ref{mainthe:slei},  during each smooth segment, the midpoint moves along a circle of radius $|\frac{v_0}{\omega_0} |$ and center $\mathrm{c}_{2i}$ defined by Eq. \eqref{eq:def-cent}. We claim that the distance $d_{2i}$ from the origin to the center $\mathrm{c}_{2i}$ is independent of $i$. Indeed, Proposition \ref{lem:led-tra-rel} implies that if $z_0^+$ is front-type, then $z_{2i}^+$ is also front-type. Eq. \eqref{eq:d-for-fron-type} together with Theorem \ref{mainthe:slei} implies that $d_{0}=d_{2i}$. 

Recall the following fact from Euclidean geometry. Consider two circles with radii  $r_1$ and $r_2$ whose centers are separated by a distance $d$. Then the circles are externally tangent if and only if $d=r_1+r_2$, and internally tangent if and only if $d=|r_1-r_2|$.   In our case, the distance between the centers is $d_0 $  and the circles have radii $|\frac{v_0}{\omega_0} |$ and  $|d_0 - |\frac{v_0}{\omega_0}||$. Therefore, the circles are externally tangent if $0 < d_0 - |\frac{v_0}{\omega_0}|$, and internally tangent if $ d_0 - |\frac{v_0}{\omega_0}|<0$; see Remark \ref{rem:int-ext-int}.

\textbf{(Case $z_0^+ \in \mathcal{RI}^+_v)~$} Let $z_0^+ \in \mathcal{RI}^+_v$, and consider the smooth dynamics with initial state $z_{2i}^+ = \T^{2i}(z^+_0)$. By Theorem \ref{mainthe:slei}, on each smooth segment, the midpoint follows a linear trajectory whose associated chord subtends an angle  $2\sigma$; see Fig. \ref{fig:v-zero-angles}. Therefore, the distance from the origin to the midpoint of the chord is $R|\cos(-\vartheta+\frac{\pi}{2})|$, which yields the desired formula.  

Finally, the case $ z_0^+ \in \I^+_\omega$ follows the same strategy as the case $z_0^+ \in \I^+_g$.
\end{proof}

\subsubsection{Invariant Spaces}

We define the subset of perpendicular impact states by
$$ \I^\perp \coloneqq  \{z\in \I: \vartheta = \pi k, \;k\in \mathbb{Z} \}.\footnote{The condition $\vartheta = \pi k$ with $k\in \mathbb{Z}$ implies that the normal vector to the disk boundary is parallel to the orientation of the knife-edge. Hence, the knife-edge is perpendicular to the tangent vector to the disk boundary.}  $$

The following lemma identifies $\T$-invariant subsets of the impact space. 

\begin{lemma}\label{lem:perp-imp}
The subset $\mathcal{TI}$ and $\I^\perp\cap\I^+$ are $\T$-invariant.
\end{lemma}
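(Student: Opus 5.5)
Since $\T=\mathrm{I}\circ\mathrm{Ro}$, I will check invariance of each subset under the two factors separately: first that $\mathrm{Ro}$ carries the subset of $\I^+$ into the corresponding subset of $\I^-$, then that $\mathrm{I}$ carries it back into $\I^+$. Throughout I use the explicit formulas $\mathrm{Ro}(z^+)=([\phi]+\Omega(z^+),[-\vartheta+\pi],\omega,v)$ from Theorem \ref{the:rot-map} and the formula for $\mathrm{I}$ in Lemma \ref{lem:impct-map}.

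\textbf{Tangential states.} The proof of Corollary \ref{def:cor-rot-map} already gives $\mathrm{Ro}(\mathcal{TI}_f)\subset\mathcal{TI}_b$ and $\mathrm{Ro}(\mathcal{TI}_b)\subset\mathcal{TI}_f$. It therefore suffices to show that $\mathrm{I}$ fixes every tangential state. Take $z\in\mathcal{TI}_f$. By Eq. \eqref{eq:h-dif-fro}, $dh_{q(z)}(\dot q(z))=0$, so the reflection in Eq. \eqref{eq:imp-map-qua} reduces to the identity. Equivalently, one can read this off the matrix for $\Rm_f^\vartheta$ using $\ell\omega\sin\vartheta=-v\cos\vartheta$. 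The same argument works for $\mathcal{TI}_b$ via Eq. \eqref{eq:h-dif-back}. Hence $\T(\mathcal{TI})\subset\mathcal{TI}$, consistent with the remark after Proposition \ref{prp:pla-imp-}.

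\textbf{Perpendicular states.} Let $z^+\in\I^\perp\cap\I^+$, so $\sin\vartheta=0$ and $\cos\vartheta=\pm1$. The $\vartheta$-coordinate of $\mathrm{Ro}(z^+)$ is $-\pi k+\pi$, again an integer multiple of $\pi$, so $\mathrm{Ro}(z^+)\in\I^\perp\cap\I^-$. On this set the off-diagonal entries of $\Rm_f^\vartheta$ and $\Rm_b^\vartheta$ vanish, and $\mathrm{D}=1/m$. The reflection is then $(\omega,v)\mapsto(\omega,-v)$, which leaves $\vartheta$ unchanged. Consequently $\T(z^+)\in\I^\perp$, and $\T(z^+)\in\I^+$ holds automatically because $\mathrm{I}$ maps into $\I^+$.

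\textbf{Main obstacle.} The delicate point is that $\mathrm{Ro}$, and hence $\T$, is only defined once it is known that the trajectory leaving $z^+$ actually reaches a next impact. In the perpendicular case with $\omega=0$ one must use the $\mathcal{RI}^+_v$ branch; there $v$ has the sign required by Remark \ref{rem:angl-rep-ome-zer}. After reflection $v$ flips sign, and I need to verify that the resulting state is a genuine post-impact state, i.e., that the sign condition in Eq. \eqref{eq:def-reg-fro-im} or \eqref{eq:def-reg-bac-im} holds. With $\sin\vartheta=0$ that condition reduces to the sign of $\pm v\cos\vartheta$, and reversing $v$ reverses it, which is exactly what is required. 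I expect the only real care to be keeping track of the $\pm$ conventions for front and back types and of the $\mathcal{RI}_v^+$ representative choices.
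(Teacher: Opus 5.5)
Your proposal is correct and follows the paper's proof. For $\I^\perp\cap\I^+$, you use the same two facts: $\mathrm{Ro}$ sends $\vartheta=\pi k$ to $-\pi k+\pi$, and at such angles both $\Rm_f^\vartheta$ and $\Rm_b^\vartheta$ reduce to $(\omega,v)\mapsto(\omega,-v)$. For $\mathcal{TI}$, which the paper calls invariant ``by definition,'' your argument fills in the missing detail: $\mathrm{Ro}$ swaps $\mathcal{TI}_f$ and $\mathcal{TI}_b$, and $\mathrm{I}$ is the identity there because $dh(\dot q)=0$. Your final ``main obstacle'' paragraph is not needed. $\mathrm{Ro}$ is given by an explicit formula on all of $\I^+$, and $\mathrm{I}$ maps $\I^-$ into $\I^+$ by construction, as you already note in the preceding paragraph.
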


\begin{proof}
    The space of tangential impact states is $\T$-invariant by definition. 
    Let $z^+ \in \I^\perp\cap\I^+$. By Eq. \eqref{eq:def-rot-map}, $\mathrm{Ro}(z^+) \in \I^\perp \cap \I^-$. Since $\mathcal{R}_b^{\vartheta}(\omega,v ) = (\omega,-v)$ and $\mathcal{R}^\vartheta_f(\omega,v ) = (\omega,-v)$ whenever $\vartheta = \pi k$ for some $k\in \mathbb{Z}$, it follows that $\T(z^+) \in \I^\perp\cap\I^+$. Thus, $\I^\perp\cap\I^+$ is $\T$-invariant.
\end{proof}

We conclude this section by observing that the circles $\mathcal{C}^0(z^+)$ and $\mathcal{C}^1(z^+)$ coincide when  $z^+ \in \mathcal{TI}$ or $z^+ \in \I^\perp\cap\I^+$, as shown in Fig. \ref{fig:per-den-mot}. Moreover, when $z^+ \in \I^+_\omega$, one of the caustic circles coincides with the circle containing the midpoint positions at the impact states, as shown in Fig. \ref{fig:v-zero}.

\section*{Conclusion and Future Work}

We presented a formula for elastic nonholonomic impacts in terms of the horizontal gradient. We then introduced the knife-edge billiard in a disk and parametrized its impact states. Using rotational symmetry and elementary Euclidean geometry, we constructed a map  $\T:\I^+\to \I^+$ that describes the hybrid dynamics through the relation
$$z^+_{i+1} = \T(z_i^+).$$
We showed that the system is integrable in the sense that its long-term dynamics can be explicitly determined from the post-impact state $z_i^+$ and the recurrence relation above. This example adds another integrable billiard system in a smooth, strictly convex, bounded domain to the known examples, which include the classical billiard in a disk, the classical elliptic billiard, and the magnetic billiard in a disk.

Several natural directions remain for future work. One direction is the Chaplygin sleigh billiard in a disk \cite{borisov2009dynamics,bloch1996nonholonomic}. This generalizes the knife-edge case to situations where the center of mass does not coincide with the blade contact point.
In this case, the nonholonomic smooth dynamics can be integrated in terms of elliptic functions. The Chaplygin sleigh is also one of the simplest mechanical systems exhibiting the dissipative character of nonholonomic dynamics.
 Another direction is the knife-edge billiard in an elliptical domain, where the rotational symmetry used in the present work is broken; see Fig. \ref{fig:future-work}. Numerical analysis of this case was carried out in \cite{clark2019bouncing}. This setting would require new techniques and may provide insight into the persistence or destruction of integrable structures in nonholonomic billiards.

\begin{figure}
    \centering
    \includegraphics[width=0.4\linewidth]{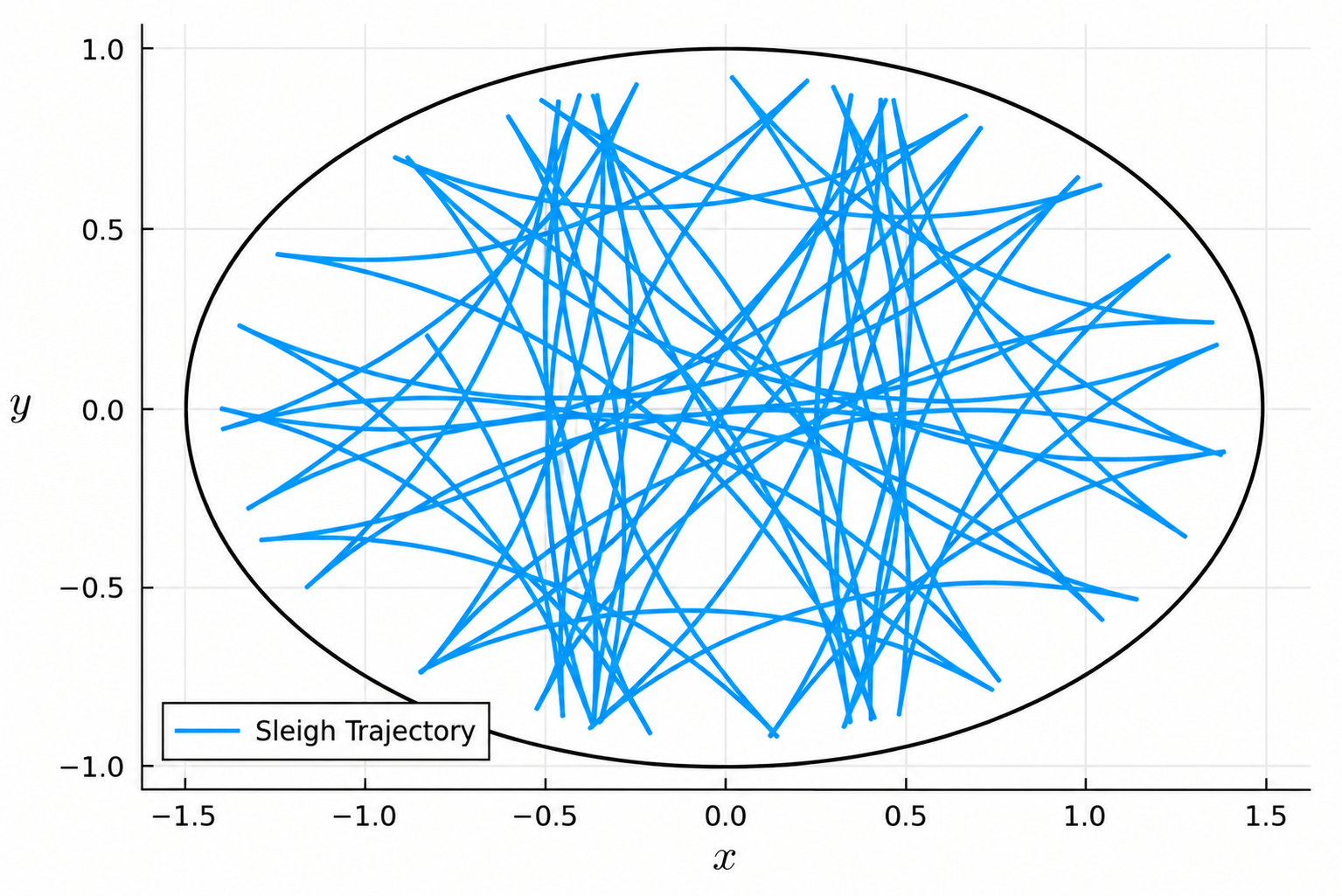}\quad \includegraphics[width=0.4\linewidth]{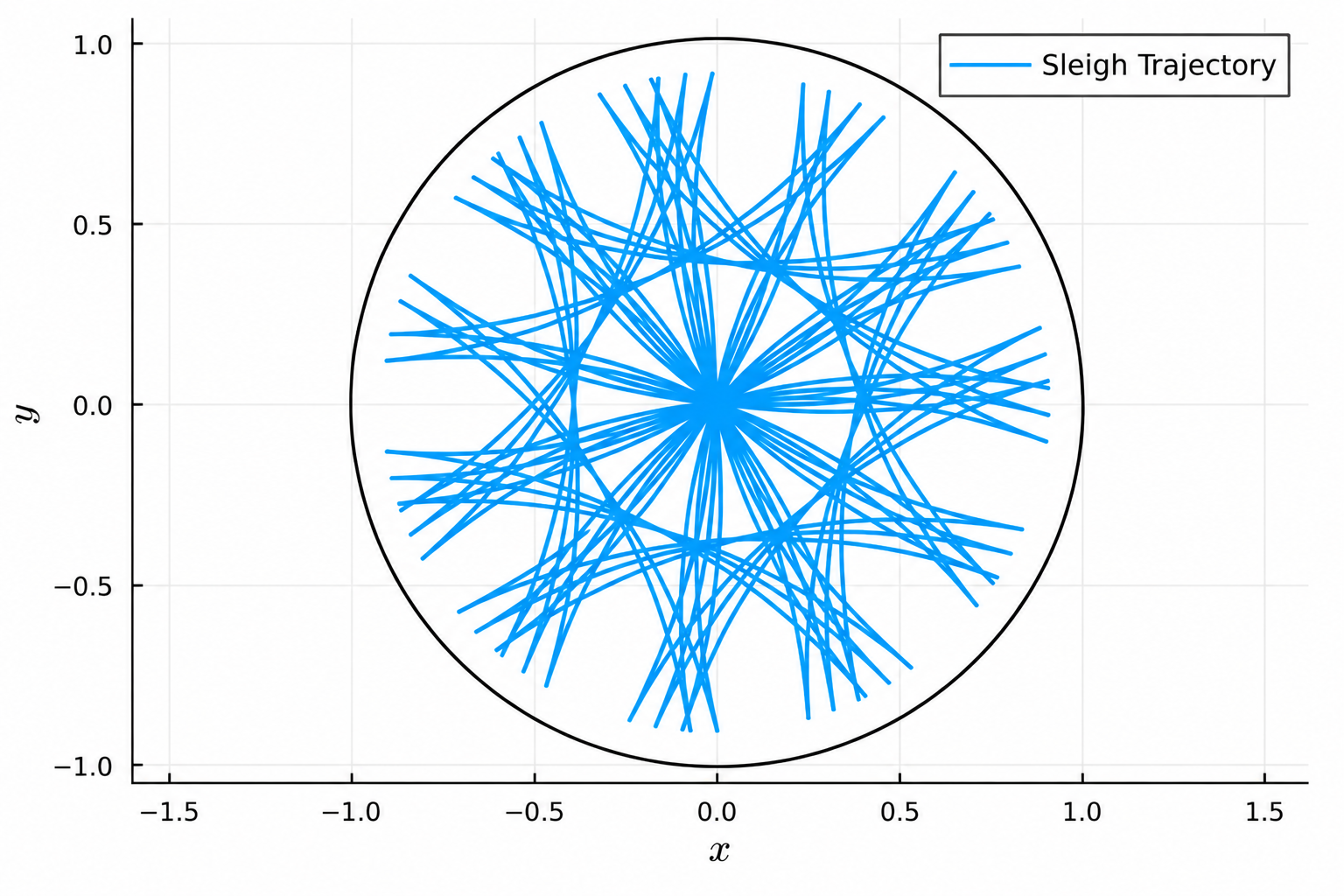}
    \caption{The left panel shows a Chaplygin sleigh billiard in a disk, while the right panel shows a knife-edge billiard in an elliptical table.  }
    \label{fig:future-work}
\end{figure}

\appendix

\section{Elastic Impact Map}\label{apd:imap-map}

    We recall that in Subsection \ref{ap:hyb-sys}, we assume that the Lagrangian function $\La:TQ\to\R$ is natural. The fiber derivative agrees with the musical isomorphism $\flat$:
    \begin{equation*}
        \begin{split}
            \flat:&  TQ \to T^*Q,\quad v \to g(v,\cdot),\\
            \sharp:& T^*Q \to TQ^,\quad \sharp = \flat^{-1}.
        \end{split}
    \end{equation*}

During this proof, we use Einstein summation notation: when an expression has a matching lower and upper index, a sum is implied, i.e., $a_kb^k = \sum_{k}a_kb^k$. 

\begin{proof}
    The spatial variations $\delta q$ must satisfy the nonholonomic constraints $\Theta^k(\delta q) = 0$ and the impact constraint $dh(\delta q) = 0$. Therefore, the impact map must satisfy
    \begin{equation}\label{eq:impact-map-cons}
        \begin{split}
            \big( FL^+-FL^-\big) & = \lambda_k \Theta^k+\alpha dh,\\
            \Delta H & = 0,\\
            \Theta^k(\dot{q}^+) & = 0.
        \end{split}
    \end{equation}
    Applying the musical isomorphism $\sharp:T^*Q\to TQ$ to the first equation in Eq. \eqref{eq:impact-map-cons}, we obtain $\dot q^+ = \dot q^-+\lambda_kW^k+\alpha \nabla h$, where $W^k = \sharp(\Theta^k)$ and $\nabla h = \sharp(dh)$. Since $\dot q^+,\dot q^- \in \D$, it follows that
    \begin{equation}\label{eq:alp-valu}
        0 =\lambda_k a^{k \ell} + \alpha \Theta^\ell (\nabla h),\;\text{for}\;\ell=1,\dots,m,\;\text{where}\;\;a^{k\ell} = \Theta^k(W^\ell). 
    \end{equation}
    Since the $\{\Theta^k\}_{k=1}^m$ are linearly independent, the matrix $a^{k\ell}$ is invertible. Denote its inverse by $a_{k\ell}$. It follows that the above system of equations has a solution
    $$ \lambda_k(\alpha) = - \alpha a_{k\ell}\Theta^\ell(\nabla h).$$
    Since the Lagrangian is natural, the kinetic energy can be expressed as $\frac{1}{2}p(\dot{q})$. The second equation in Eq. \eqref{eq:impact-map-cons} implies that the kinetic energies $p^+(\dot q^+)$ and $p^-(\dot q^-)$ are equal. This yields
    \begin{equation}\label{eq:kin-ener}
        \begin{split}
            0 =2\alpha dh(\dot q^-) +  \lambda_k\lambda_i a^{ki}+2 \lambda_k \alpha \Theta^k(\nabla h) + \alpha^2 dh(\nabla h),
        \end{split}
    \end{equation}
    where we used that $\Theta^k(\dot q^-) = 0$, $dh(\dot q^-) = p^-(\nabla h)$,  and $\Theta^k(\nabla h) = dh(W^k)$. Substituting the value of $\lambda_k(\alpha)$ found in Eq. \eqref{eq:alp-valu} into Eq. \eqref{eq:kin-ener}, we obtain
    \begin{equation*}
        \begin{split}
            0 & =\alpha \Big( 2 dh(\dot q^-) +\alpha \big(  dh(\nabla h) -  a_{k \ell} \Theta^\ell(\nabla h) \Theta^k(\nabla h)  \big) \Big)\\
            & =\alpha \Big( 2 dh(\dot q^-) +\alpha  g\big(\nabla h, \nabla h -a_{k\ell}\Theta^k(\nabla h) W^\ell\big)\Big).
        \end{split}
    \end{equation*}
    In the above equation, the term $\nabla h -a_{k\ell}\Theta^k(\nabla h) W^\ell$ is precisely the horizontal gradient $\horg h$, since the orthogonal projection $\pi_\D:TQ \to \D$ is given by 
    $$\pi_{\D}(\dot{q}) = \dot{q} - a_{k\ell}\Theta^k(\dot q) W^\ell.$$  The nontrivial solution $\alpha = \frac{-2dh(\dot{q}^-)}{g(\horg h,\horg h)}$ yields the desired result, since $$g(\horg h,\horg h) = g(\nabla h,\horg h).$$
\end{proof}

\nocite{*} 
\bibliographystyle{plain}
\bibliography{bibli}

\end{document}